\documentclass[12pt,reqno]{amsart}

\usepackage[cp1251]{inputenc}
\usepackage{amsthm,amssymb,amsmath,amsfonts}
\pagestyle{plain}

\usepackage{graphicx}
\usepackage[matrix,arrow,curve]{xy}
\sloppy

\textwidth=16cm 
\textheight=23cm 
\oddsidemargin=0cm
\evensidemargin=0cm 
\topmargin=-20pt

\newcommand{\CC}{\ensuremath{\mathbb{C}}}

\newcommand{\Q}{\ensuremath{\mathbb{Q}}}

\newcommand{\Z}{\ensuremath{\mathbb{Z}}}

\newcommand{\F}{\ensuremath{\mathbb{F}}}
\newcommand{\Fr}{\ensuremath{\mathbf{F}}}

\newcommand{\SG}{\ensuremath{\mathfrak{S}}}

\newcommand{\ka}{\ensuremath{\Bbbk}}

\newcommand{\kka}{\ensuremath{\overline{\Bbbk}}}

\newcommand{\XX}{{\ensuremath{\overline{X}}}}
\newcommand{\BF}{\ensuremath{\overline{F}}}
\newcommand{\EE}{\ensuremath{\overline{E}}}

\newcommand{\Pro}{\ensuremath{\mathbb{P}}}

\newcommand{\Aut}{\ensuremath{\operatorname{Aut}}}

\newcommand{\Gal}{\ensuremath{\operatorname{Gal}}}

\newcommand{\Hom}{\ensuremath{\operatorname{Hom}}}

\newcommand{\Pic}{\ensuremath{\operatorname{Pic}}}

\newcommand{\Char}{\ensuremath{\operatorname{char}}}

\newcommand{\ord}{\ensuremath{\operatorname{ord}}}

\newcommand{\tp}{{\mathrm T\Pi}}

\makeatletter
\@addtoreset{equation}{section}
\makeatother

\newtheorem{theorem}[equation]{Theorem}
\newtheorem{proposition}[equation]{Proposition}
\newtheorem{lemma}[equation]{Lemma}
\newtheorem{corollary}[equation]{Corollary}

\theoremstyle{definition}
\newtheorem{example}[equation]{Example}
\newtheorem{definition}[equation]{Definition}

\theoremstyle{remark}
\newtheorem{remark}[equation]{Remark}
\newtheorem*{notation}{Notation}

\title{Minimal cubic surfaces over finite fields}

\thanks{The research was carried out at the IITP RAS at the expense of the Russian Foundation for Sciences (project $N^{\underline{o}}$ 14-50-00150).}


\author{Sergey Rybakov and Andrey Trepalin}

\address{\emph{Sergey Rybakov}
\newline
\textnormal{Institute for Information Transmission Problems, 19 Bolshoy Karetnyi side-str., Moscow 127994, Russia}
\newline
\textnormal{\texttt{rybakov.sergey@gmail.com}}}

\address{\emph{Andrey Trepalin}
\newline
\textnormal{Institute for Information Transmission Problems, 19 Bolshoy Karetnyi side-str., Moscow 127994, Russia}
\newline
\textnormal{\texttt{trepalin@mccme.ru}}}

\begin{document}

UDK 512.774.7                                              

\begin{abstract}
Let $X$ be a minimal cubic surface over a finite field $\F_q$. The image $\Gamma$ of the Galois group $\Gal(\overline{\F}_q / \F_q)$ in the group $\Aut(\Pic(\XX))$ is a cyclic subgroup of the Weyl group $W(E_6)$. There are $25$ conjugacy classes of cyclic subgroups in $W(E_6)$, and $5$ of them correspond to minimal cubic surfaces. It is natural to ask which conjugacy classes come from minimal cubic surfaces over a given finite field. In this paper we give a partial answer to this question and present many explicit examples.
\end{abstract}

\keywords{finite field, cubic surface, zeta function, del Pezzo surface}


\maketitle
\section{Introduction}
Let $X$ be a variety over a finite field $\F_q$, and let $N_d$ be the cardinality of the set $X(\F_{q^d})$ of $\F_{q^d}$-points on $X$. The
zeta function of $X$ is the formal power series
$$
Z_X(t)=\exp\left(\sum_{d=1}^\infty \frac{N_dt^d}{d}\right).
$$
In fact, $Z_X(t)$ is always rational (see \cite{SGA4}). If $X$ is a cubic surface, then (see~\cite[IV.5]{Man74})
\begin{equation}\label{zeta}
    Z_X(t)=\frac{1}{(1-t)P(t)(1-q^2t)}
\end{equation}
where
$$
P(t)=\det(1-qt\Fr|\Pic(\XX)\otimes\Q),
$$
\noindent and $\Fr$ is the linear automorphism of $\Pic(\XX)\otimes\Q$ induced by the Frobenius element. 
Manin~\cite{Man63} and Swinnerton-Dyer~\cite{SD} classified possible actions of Frobenius automorphism on $\Pic(\XX)$ preserving the intersection form.
 On the other hand, they did not figure out if a given action comes from an actual cubic 
surface. If the cubic surface $X$ is not minimal, then $X$ is a blow up of a del Pezzo surface $Y$ of higher degree.
In this case~$Y$ is either rational or a minimal del Pezzo surface of degree $4$.
In \cite{Ry05} the first author constructs all types of minimal del Pezzo surfaces of degree $4$ for $q>3$.
In this paper we explicitly construct minimal cubic surfaces with all possible zeta functions over many finite fields. By formula~\eqref{zeta}, the zeta function of a cubic surface is uniquely determined by the polynomial $P(t)$.   There are exactly five such polynomials for minimal cubic surfaces \mbox{(see~\cite[IV.9]{Man74})}: 
\begin{itemize}
\item[$(c_{11})$] $P_{11}(t)=(1-qt)(1+qt+q^2t^2)^3$;
\item[$(c_{12})$] $P_{12}(t)=(1-qt)(1+qt+q^2t^2)(1-qt+q^2t^2)^2$;
\item[$(c_{14})$] $P_{14}(t)=(1-qt)(1+q^3t^3+q^6t^6)$;
\item[$(c_{13})$] $P_{13}(t)=(1-qt)(1+qt+q^2t^2)(1-q^2t^2+q^4t^4)$;
\item[$(c_{10})$] $P_{10}(t)=(1-qt)(1+qt)^2(1+qt+q^2t^2)(1-qt+q^2t^2)$.
\end{itemize}

One can find detailed information about these five cases in Proposition \ref{minclass}.

We say that a cubic surface has type $(c_{i})$ if the polynomial $P_i(t)$ appears in its zeta function.

In his paper \cite{SD10} Swinnerton-Dyer for any finite field $\F_q$ constructs a cubic surface with $q^2 -2q + 1$ points.
 In fact, the only type of cubic surfaces with $q^2 - 2q + 1$ points is~$(c_{11})$. In this paper as a by-product we obtain another construction of such cubic surfaces for odd $q$.
On the other hand, in the paper \cite{BFL16} it is proved that for sufficiently large~$q$ there exists a cubic surface of any given type (see \cite[Theorem 1.7]{BFL16}), and some explicit constructions of non-minimal cubic surfaces are also given.

In this paper we focus on explicit constructions of minimal cubic surfaces. Unfortunately, there remain some restrictions on $q$. The main result of this paper is the following.

\begin{theorem}
\label{MAIN}
For all odd~$q$ there exist cubic surfaces of types $(c_{12})$ and $(c_{13})$.
If \mbox{$q = 6k + 1$}, then there exists a cubic surface of type $(c_{14})$. 
Finally, if $q>2$ there exists a cubic surface of type $(c_{10})$, and there is no such surface over $\F_2$.
\end{theorem}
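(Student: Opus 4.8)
The plan is to reduce the existence of each type to the realization of a prescribed conjugacy class of the cyclic group $\langle\Fr\rangle\subset W(E_6)$, and then to build that class by twisting an explicit split cubic surface. By Proposition~\ref{minclass} each type $(c_i)$ corresponds to a single conjugacy class of a cyclic element of $W(E_6)$, of order $6$ for $(c_{12})$ and $(c_{10})$, order $12$ (the Coxeter element) for $(c_{13})$, and order $9$ for $(c_{14})$; in particular such an $X$ becomes split only over $\F_{q^n}$, where $n$ is that order. By~\eqref{zeta} the zeta function, hence the type, is determined by the integers $\operatorname{tr}(\Fr^d\mid\Pic(\XX))$, equivalently by $|X(\F_{q^d})|=q^{2d}+q^d\operatorname{tr}(\Fr^d\mid\Pic(\XX))+1$ for a few values of $d$. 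Thus it suffices, for each target class $c$, to exhibit one smooth cubic surface over $\F_q$ whose geometric Frobenius lies in $c$ and to certify this by a short point count. The main tool is the twisting principle: if $X_0/\F_q$ is split (all $27$ lines rational, so $\Fr_0$ acts trivially on $\Pic(\XX)$) and $\sigma\in\Aut(X_0)$ is an $\F_q$-automorphism, then the twist of $X_0$ by the class of $\sigma$ has geometric Frobenius equal to the image of $\sigma$ in $W(E_6)$; more generally, for a non-split $X_0$ the twisted Frobenius is $\sigma\cdot\Fr_0$. So I want explicit models carrying the required cyclic structure.

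For $(c_{14})$ I would use the Fermat cubic $X_0\colon x_0^3+x_1^3+x_2^3+x_3^3=0$. It is smooth precisely when $\Char\F_q\neq 3$, and its $27$ lines are defined over $\F_q(\zeta_3)$; hence $X_0$ is split exactly when $\zeta_3\in\F_q$, that is $q\equiv 1\pmod 3$, which together with $q$ odd is the hypothesis $q=6k+1$. Over such $\F_q$ the group $\Aut(X_0)\supseteq(\Z/3)^3\rtimes S_4$ is defined over $\F_q$, and it contains elements of order $9$: taking $\sigma=(v,\tau)$ with $\tau$ a $3$-cycle on the coordinates and $v$ a diagonal cube-root scaling whose $\tau$-norm $v+\tau v+\tau^2 v$ is nonzero gives $\sigma^3\neq 1$ while $\sigma^9=1$, so $\sigma$ has order $9$. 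The step needing care is to check that the image of this particular $\sigma$ in $W(E_6)$ is the class $(c_{14})$, with characteristic polynomial $(1-qt)(1+q^3t^3+q^6t^6)$, rather than some other order-$9$ behaviour; this is a finite computation on the $27$ lines, after which the twist ${}^{\sigma}X_0$ has type $(c_{14})$.

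For $(c_{12})$, $(c_{13})$ over all odd $q$, and for $(c_{10})$ over all $q>2$, I would run the same machine but split the argument according to $q\bmod 3$ and exploit the fact that each target order factors into coprime parts. Indeed the Coxeter class $(c_{13})$ is a commuting product of its order-$3$ power (which is $(c_{11})$-type, eigenvalues $\omega,\bar\omega$ each thrice) and its order-$4$ power, and $(c_{12})$, $(c_{10})$ decompose similarly as an order-$3$ element times an involution whose $(-1)$-eigenspace has dimension $4$. I would therefore realize $\Fr_X=\sigma\cdot\Fr_0$ with $\Fr_0$ the Frobenius of a surface splitting only over $\F_{q^3}$ (the order-$3$ constituent) and $\sigma$ an $\F_q$-automorphism realizing the commuting order-$2$ or order-$4$ constituent, the $S_4$ part of $\Aut$ being available in every characteristic. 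The double eigenvalue $-1$ appearing in $(c_{10})$ and $(c_{12})$ is produced by a pair of lines conjugate over $\F_{q^2}$, which is why odd $q$ is imposed to keep the models smooth and to control this quadratic datum. Verifying, uniformly in $q\bmod 3$, that the composite classes are exactly the desired ones is the laborious part of this step.

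Finally, for the non-existence over $\F_2$ I would argue by point count. A surface of type $(c_{10})$ has $\operatorname{tr}(\Fr\mid\Pic(\XX))=-1$, so $|X(\F_q)|=q^2-q+1$; at $q=2$ this forces exactly $3$ rational points, while $\operatorname{tr}(\Fr^2\mid\Pic(\XX))=1$ gives $|X(\F_4)|=21$. The main obstacle of the whole theorem is to show that these forced counts are incompatible with a smooth cubic surface over $\F_2$ carrying the line configuration of $(c_{10})$: a rational line would contain all of $\Pro^1(\F_2)$, i.e.\ all three rational points, so the constraints on rational lines and on the three points are extremely rigid. I expect to resolve this by a direct finite verification, since up to $\operatorname{PGL}_4(\F_2)$ there are only finitely many smooth cubic forms over $\F_2$, and to check that none has Frobenius in the class $(c_{10})$. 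This exhaustive check over $\F_2$, together with the uniform identification of the composite Frobenius classes in the $q\equiv 2\pmod 3$ case, is where the real work lies.
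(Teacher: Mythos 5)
Your reduction via Proposition~\ref{minclass} and the twisting principle (the paper's Proposition~\ref{dPtwist}) is sound, and your $(c_{14})$ argument is essentially complete and is a genuinely different route: since $c_{14}$ is the only conjugacy class of order $9$ in $W(E_6)$ (Table~\ref{table1}) and $\Aut(\XX)$ embeds in $W(E_6)$, twisting the split Fermat cubic (split exactly when $q\equiv 1\bmod 3$) by the rational order-$9$ automorphism $(x:y:z:t)\mapsto(y:z:\omega x:t)$ yields type $(c_{14})$ at once, whereas the paper goes through cyclic cubic surfaces, inflection-point orbits, and an elliptic curve with controlled $9$-torsion (Section~\ref{ex9}). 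The genuine gap is in the remaining cases. For $(c_{12})$, $(c_{13})$ and $(c_{10})$, writing the target class as a commuting product $\sigma\cdot\Fr_0$ only restates the problem: when $q\equiv 2\bmod 3$ there is no split auxiliary surface, so your $\Fr_0$ must come from a surface that already has Frobenius of type $I$ (i.e.\ a surface of type $(c_{11})$) or of type $III$, and \emph{in addition} this surface must carry an automorphism defined over $\F_q$ of the prescribed class: $c_3$ for $(c_{12})$ and $(c_{10})$, and an order-$4$ element of class $D_4(a_1)$ for $(c_{13})$. A generic cubic surface has trivial automorphism group, so such surfaces are special (for $c_3$, exactly those with a rational Eckardt point, by Proposition~\ref{EckInv} and Lemma~\ref{Eckinv}), and producing them over every odd $q$ is precisely the content of the paper that your proposal omits: the double-cover construction of Subsection~\ref{cubic_construction}, the distinguished-lines criterion of Proposition~\ref{cases_c11_c13}, and elliptic curves with prescribed $2$-, $4$- and $\ell$-torsion supplied by Deuring's theorem (Propositions~\ref{case13} and~\ref{case11}); for $(c_{10})$, explicit configurations of two Galois-conjugate point triples with three concurrent conjugate lines, including a separate construction for $q=3$ (Proposition~\ref{c10fq}). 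Note in particular that the paper never needs an order-$4$ automorphism: it detects $(c_{13})$ through the field of definition of the distinguished lines, exactly because the existence of a type-$(c_{11})$ surface with a rational commuting $D_4(a_1)$ automorphism --- the input your plan requires --- is itself an unproven (and not obviously easier) existence statement.

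The non-existence over $\F_2$ is likewise left open in your proposal. Your forced point counts ($3$ points over $\F_2$, $21$ over $\F_4$) and the observation about rational lines are correct, but you then defer to an exhaustive enumeration of smooth cubic forms over $\F_2$ modulo $\mathrm{PGL}_4(\F_2)$ which you do not carry out. The paper's Proposition~\ref{c10f2} instead argues by hand: by Proposition~\ref{twist-to-use}, a type-$(c_{10})$ surface over $\F_2$ would produce a type-$III$ surface with a rational Eckardt point, which is excluded by a counting argument in $\Pro^2_{\F_2}$ (Lemma~\ref{c10f2eck}); then Lemma~\ref{onepointcubic} and Corollary~\ref{onepointtangent} force the explicit normal form
$$
x^2y + y^2z + z^2x + (xy + xz + yz)t + t^3 = 0,
$$
which has singular points over $\F_8$, a contradiction. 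In summary, your proposal gives a correct framing and a complete alternative proof for $(c_{14})$, but for $(c_{12})$, $(c_{13})$, $(c_{10})$ and the $\F_2$ claim it reduces the theorem to existence statements essentially equivalent to the theorem itself, without proving them.
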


We prove Theorem \ref{MAIN} in Sections $5$ and $6$ case-by-case.

It is well-known that on a cubic surface divisors $D$ such that $D^2=-2$ and $D \cdot K_X = 0$ form the root system $E_6$ in $\Pic(\XX)\otimes\Q$. This gives a homomorphism from the Galois group $\Gal\left(\kka / \ka \right)$ to the Weyl group~$W(E_6)$.

Iskovskikh and Manin proved that a minimal $\F_q$-rational surface is either isomorphic to a del Pezzo surface or 
admits a structure of a conic bundle. Del Pezzo surfaces of degree greater than $3$ are birationally isomorphic to conic bundles. This observation allowed the first author to construct minimal del Pezzo surfaces of degree $4$ with a given zeta function in~\cite{Ry05}. Minimal cubic surfaces are not birational to conic bundles, and one has to find another way to construct them.
The main idea of this paper is to consider cubic surfaces with Eckardt points. If there is an Eckardt point on a cubic surface then it has a nontrivial automorphism group (see \cite[Proposition~9.1.23]{Dol}). The image of this group in the Weyl group $W(E_6)$ commutes with the image $\Gamma$ of the Galois group $\Gal(\overline{\F}_q / \F_q)$ in $W(E_6)$. So we have some restrictions on the group $\Gamma$ which in turn totally defines the zeta function.

The considered class of cubic surfaces is relatively narrow, and does not give an answer for all possible $q$, but
we hope that other constructions of cubic surfaces could help to fill this gap.

The plan of this paper is as follows.

In Section $2$ we recall some notions and notation about cubic surfaces and present a classification of groups $\Gamma$ such that the corresponding cubic surface is minimal.

In Section $3$ we collect some results about \textit{cyclic cubic surfaces}. These surfaces have non-trivial automorphisms group over $\overline{\F}_q$ and contain at least $9$ Eckardt points. Assuming existence of an elliptic curve with some specific properties, we construct a cyclic cubic surface of type $(c_{14})$.

A cubic surface with an Eckardt point defined over $\F_q$ always has an involution. In Section $4$ we study the twist of the cubic surface by this involution. In particular, we compute the Frobenius action on the Picard group of the twist. We use this observation to construct cubic surfaces of types $(c_{11})$, $(c_{12})$, and $(c_{13})$ starting from specific elliptic curves with some additional data.
Moreover, we show that a cubic surface of type $(c_{10})$ with an Eckardt point exists if and only if there exists a cubic surface with an Eckardt point which is a blowup of $\Pro^2_{\F_q}$ at two points of degree $3$.

In Section $5$ we use the Tate--Honda theory to produce elliptic curves with properties formulated in Sections $3$ and $4$. This completes the constructions of cubic surfaces of types 
$(c_{11})$, $(c_{12})$, $(c_{13})$, and $(c_{14})$.

In Section $6$ for $q > 2$ we find two points of degree $3$ on $\Pro^2_{\F_q}$ such that the blowup of these points is a cubic surface with an Eckardt point. This completes the construction of a cubic surface of type $(c_{10})$. For $q = 2$ we show that such surface does not exist.

For convenience of the reader we give a table of conjugacy classes in $W(E_6)$ and their properties in the appendix.

The first author is a Simons-IUM contest winner and the second author is a Young Russian Mathematics award winner. We would like to thank sponsors and jury of both grants. 
The authors are grateful to Costya Shramov for many useful discussions and comments. 
We thank Alexander Duncan, Sergey Gorchinskiy, and Alexander Kuznetsov for introducing some ideas which were used in this paper.
We are thankful to Michael A. Tsfasman for his interest to this work and for his remarks on the paper.

\begin{notation}

Throughout this paper $X$ is a smooth cubic surface in $\Pro^3_{\F_q}$ over a finite field $\F_q$ of order $q$. The image of the group $\Gal\left( \overline{\F}_q / \F_q\right)$ in the Weyl group $W(E_6)$ acting on the Picard group $\Pic(\XX)$ is $\Gamma$. We denote by $\omega$ a primitive root of unity of order $3$. 

\end{notation}

\section{Cubic surfaces}

In this section we collect some well-known results on cubic surfaces and establish notation.
Let $\ka$ be an arbitrary field, and let $\kka$ be the algebraic closure of $\ka$. 
Let $X$ be a cubic surface in $\Pro^3_{\ka}$, and let $\XX=X\otimes\kka $. Then $\XX$ is isomorphic to the blowup $f: \XX \rightarrow \Pro^2_{\kka}$ of $6$ points $p_1$, $\ldots$, $p_6$ in general position. 
Put $E_i = f^{-1}(p_i)$, and $L = f^*(l)$, where~$l$ is the class of a line on $\Pro^2_{\kka}$ not passing through $p_1$, $\ldots$, $p_6$. The anticanonical class
$$
-K_{\XX} \sim 3L - \sum \limits_{i=1}^6 E_i
$$
is equivalent to a hyperplane section of $\XX$. 
An effective divisor $D$ on $\XX$ of degree $1$ such that $D^2=-1$ is called \emph{a $(-1)$-curve}.
The set of $(-1)$-curves on $\XX$ consists of $E_i$, the proper transforms \mbox{$L_{ij} \sim L - E_i - E_j$} of the lines passing through a pair of points $p_i$ and~$p_j$, and the proper transforms
$$
Q_j \sim 2L + E_j - \sum \limits_{i = 1}^6 E_i
$$
\noindent of the conics passing through five points $p_i$ for $i\neq j$.

In this notation one has:
$$
E_i \cdot E_j = 0; \qquad E_i \cdot L_{ij} = 1; \qquad E_i \cdot L_{jk} = 0;
$$
$$
L_{ij} \cdot L_{ik} = 0; \qquad L_{ij} \cdot L_{kl} = 1; \qquad E_i \cdot Q_i = 0; \qquad E_i \cdot Q_j = 1;
$$
$$
Q_i \cdot Q_j = 0; \qquad Q_i \cdot L_{ij} = 1; \qquad Q_i \cdot L_{jk} = 0,
$$
\noindent where $i$, $j$, $k$, $l$ are pairwise distinct numbers.

The group $\Pic(\XX)$ is generated by the classes $L$ and $E_i$, and the vector space $\Pic(\XX)\otimes\Q$ is of dimension $7$. The set of divisors $D$ such that $D^2=-2$ and $D \cdot K_X = 0$, is the root system $E_6$ in $\Pic(\XX)\otimes\Q$. Denote the image of the corresponding homomorphism from the Galois group $\Gal\left(\kka / \ka \right)$ to the Weyl group~$W(E_6)$ by $\Gamma$.  
Clearly, $\Pic(X) = \Pic(\XX)^{\Gamma}$. The cubic surface $X$ is called \emph{minimal} if $\rho(X) = 1$. In fact, $X$ is not minimal if and only if there exists a $(-1)$-curve $D$ such that for any $\sigma\in\Gamma$ one has either $D = \sigma D$, or $D \cdot \sigma D = 0$.

If the field $\ka$ is finite then the group $\Gamma$ is cyclic. Therefore, a minimal cubic surface over a finite field gives rise to a cyclic subgroup $\Gamma$ in $W(E_6)$ such that $\rho\left(\XX\right)^{\Gamma} = 1$.
Conjugacy classes in $W(E_6)$ were described by Swinnerton-Dyer~\cite{SD}. We give a different proof of classification of cyclic groups $\Gamma$ such that $\rho\left(\XX\right)^{\Gamma} = 1$ to introduce the notation and some properties of these groups.

\begin{remark}\label{S6_in_W6} The order of $W(E_6)$ is equal to $51840 = 2^7 \cdot 3^4 \cdot 5$. There exists an inclusion $\SG_6 \subset W(E_6)$ such that the action on the set of $(-1)$-curves is given as follows: for $\sigma \in \SG_6$ one has $\sigma \left( E_i \right) = E_{\sigma(i)}$, $\sigma \left( L_{ij} \right) = L_{\sigma(i)\sigma(j)}$, and $\sigma \left( Q_i \right) = Q_{\sigma(i)}$. Note that the given subgroup $\SG_6$ is not normal in $W(E_6)$.

\end{remark}

\begin{lemma}
\label{5nonmin}
If a cyclic subgroup $\Gamma\subset W(E_6)$ contains an element of order $5$ \mbox{then $\rho\left(\XX\right)^{\Gamma} > 1$}.
\end{lemma}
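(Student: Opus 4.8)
The plan is to translate the conclusion into a statement about eigenvalues of a generator. Because $K_{\XX}$ is fixed by all of $W(E_6)$, there is an orthogonal decomposition $\Pic(\XX)\otimes\Q = \Q\cdot K_{\XX}\oplus(E_6\otimes\Q)$ into $\Gamma$-modules, where $E_6\otimes\Q$ is the six-dimensional span of the roots. Since $\Gamma$ is cyclic with generator $b$, its invariants coincide with those of $b$, so $\rho(\XX)^{\Gamma}=1+\dim(E_6\otimes\Q)^{b}$. Thus it suffices to exhibit a single nonzero root-space vector fixed by $b$.

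First I would pin down the order-five part. A Sylow $5$-subgroup of $W(E_6)$ has order $5$, and since $5$ divides $|\SG_6|=720$ the subgroup $\SG_6$ from Remark \ref{S6_in_W6} already contains one; hence every element of order $5$ is conjugate to a $5$-cycle in $\SG_6$, and after conjugating $\Gamma$ I may assume that $g:=b^{|b|/5}$ equals $(1\,2\,3\,4\,5)$, fixing the index $6$. A direct computation in the basis $L,E_1,\dots,E_6$ shows that $g$ fixes exactly $L$, $E_6$ and $E_1+\dots+E_5$, while it acts on the orthogonal four-dimensional complement $R\otimes\Q$ (the span of the $A_4$-subsystem generated by $E_1-E_2,\dots,E_4-E_5$) with eigenvalues equal to the four primitive fifth roots of unity. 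Consequently the plane $U:=(E_6\otimes\Q)\cap(\text{fixed space of }g)$ is two-dimensional, and scanning the list of the $72$ roots one checks that the only roots lying in $U$ are $\pm\alpha$, where $\alpha:=2L-E_1-\dots-E_6$; equivalently $R^{\perp}\cap E_6=A_1$.

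Now $b$ commutes with $g$, so it preserves both $U$ and $\Q\cdot K_{\XX}$; as it permutes the roots of $E_6$ and $\pm\alpha$ are the only roots in $U$, we get $b\alpha=\pm\alpha$. If $b\alpha=\alpha$ we are finished, since $\alpha$ is then a nonzero fixed root-space vector. If $b\alpha=-\alpha$, then $-1$ is an eigenvalue of the orthogonal transformation $b|_{U}$, whose two eigenvalues are therefore both real; if the second one is $+1$ then $b|_{U}$ is a reflection and again fixes a nonzero vector. The only obstacle is the case $b|_{U}=-\mathrm{id}$, which I expect to require the most care. Here the eigenvalues of $b$ on the four-dimensional space $R\otimes\Q$ must be primitive fifth or tenth roots of unity (their $(|b|/5)$-th powers being the eigenvalues of $g$, a degree-four characteristic polynomial leaving no room for higher cyclotomic factors), which together with $b|_{U}=-\mathrm{id}$ forces $|b|=10$; were those eigenvalues primitive tenth roots, $b^{5}$ would act as $-\mathrm{id}$ on all of $E_6\otimes\Q$, contradicting $-\mathrm{id}\notin W(E_6)$. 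Hence $b$ has order $5$ on $R\otimes\Q$, and $b^{5}$ is an involution acting trivially on $R\otimes\Q$ and as $-\mathrm{id}$ on $U$, so its $(-1)$-eigenspace is exactly $U$. Since the $(-1)$-eigenspace of an involution in a Weyl group is spanned by pairwise orthogonal roots, $U$ would then contain two independent roots, contradicting $R^{\perp}\cap E_6=A_1$. Therefore $b$ always fixes a nonzero vector of $E_6\otimes\Q$, and $\rho(\XX)^{\Gamma}\ge 2>1$.
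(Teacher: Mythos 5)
Your proof is correct, and its opening move --- the Sylow reduction to $g=(12345)\in\SG_6$ --- is exactly the paper's; but from there the two arguments genuinely diverge. The paper stays with $(-1)$-curves: $(12345)$ fixes exactly two of the $27$ lines, namely $E_6$ and $Q_6$, and these are disjoint; since $\Gamma$ is cyclic, hence abelian, every element of $\Gamma$ permutes this pair, so the class $E_6+Q_6$ is $\Gamma$-invariant and independent of $K_{\XX}$, and the lemma is done in three lines. The key is that passing to the \emph{sum} of the invariant pair makes a possible swap of the two curves harmless. You instead work in the root space, where the fixed plane $U$ of $g$ contains only the root pair $\pm\alpha$ (note that $\alpha=Q_6-E_6$, so the same geometric objects underlie both proofs); because roots come in opposite pairs, the sum trick is unavailable to you, and you must separately exclude the case $b\alpha=-\alpha$ with $b|_U=-\mathrm{id}$. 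That costs you three extra ingredients: the cyclotomic degree count on $R\otimes\Q$, the fact that $-\mathrm{id}\notin W(E_6)$, and Carter's theorem that the $(-1)$-eigenspace of an involution in a Weyl group is spanned by pairwise orthogonal roots. All three are legitimate standard facts (the last two can be cross-checked against Table \ref{table1} and \cite{Car72}), so your argument stands; it is simply longer and leans on more outside input, whereas the paper's choice of invariant objects --- curves rather than roots --- dissolves the hard case before it arises. What your route buys in exchange is finer information: it essentially shows that an element of $W(E_6)$ of order divisible by $5$ has order $5$ or $10$ and acts on $K_{\XX}^{\perp}$ with eigenvalue $1$, which is precisely the content of rows $c_{15}$ and $c_{25}$ of Table \ref{table1}.
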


\begin{proof}
By the Sylow theorem all subgroups of order $5$ are conjugate in $W(E_6)$. Therefore we can assume that an element of order $5$ is $(12345) \in \SG_6$. This element has two invariant disjoint $(-1)$-curves: $E_6$ and $Q_6$. Therefore this pair of curves is $\Gamma$-invariant, \mbox{and $\rho\left(\XX\right)^{\Gamma} > 1$}.
\end{proof}

\begin{lemma}
\label{2nonmin}
If a cyclic subgroup $\Gamma \subset W(E_6)$ does not contain an element of order $3$ \mbox{then $\rho\left(\XX\right)^{\Gamma} > 1$}.
\end{lemma}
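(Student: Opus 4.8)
The plan is to reduce the statement to a divisibility property of the characteristic polynomial of a generator $g$ of the cyclic group $\Gamma$ acting on the $6$-dimensional reflection representation $V=\langle K_{\XX}\rangle^{\perp}\otimes\Q\subset\Pic(\XX)\otimes\Q$. Since $K_{\XX}$ is $\Gamma$-invariant and $K_{\XX}^2=3\neq0$, there is an orthogonal decomposition $\Pic(\XX)\otimes\Q=\Q K_{\XX}\oplus V$ of $\Gamma$-modules, so $\rho(\XX)^{\Gamma}=1+\dim_{\Q}V^{g}$, where $V^{g}$ is the eigenspace of $g$ for the eigenvalue $1$. Thus $\rho(\XX)^{\Gamma}=1$ exactly when $\Phi_1$ does not divide the characteristic polynomial of $g|_{V}$. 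I will argue by contraposition: assuming $\rho(\XX)^{\Gamma}=1$, I will exhibit an element of order $3$ in $\Gamma$.

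The heart of the proof is a divisibility coming from the discriminant of the $E_6$ lattice. Let $Q=\langle K_{\XX}\rangle^{\perp}\cap\Pic(\XX)$ be the root lattice and let $P=Q^{*}$ be its dual inside $V$; since $\Pic(\XX)$ is unimodular and $K_{\XX}^2=3$, one has $P/Q\cong\Z/3\Z$. Because $\Gamma\subset W(E_6)$, the generator $g$ preserves $Q$ and $P$ and acts trivially on $P/Q$ (each simple reflection $s_{\alpha}$ moves a weight by a multiple of the root $\alpha\in Q$). Hence $(1-g)P\subseteq Q$. Under the assumption $\rho(\XX)^{\Gamma}=1$ the operator $1-g$ is invertible on $V$, so $(1-g)P$ is a finite-index sublattice and
\[
|\det(1-g)|=[P:(1-g)P]=[P:Q]\cdot[Q:(1-g)P]=3\cdot[Q:(1-g)P],
\]
which shows $3\mid\det(1-g)$.

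It remains to read off the order of $g$. Writing the characteristic polynomial of $g|_{V}$ as $\prod_{d}\Phi_d(t)^{a_d}$ with $a_1=0$, we get $\det(1-g)=\prod_{d}\Phi_d(1)^{a_d}$, a positive integer. Since $\Phi_d(1)=\ell$ when $d$ is a power of a prime $\ell$ and $\Phi_d(1)=1$ when $d$ has at least two distinct prime divisors, the divisibility $3\mid\det(1-g)$ forces $a_d>0$ for some $d=3^{m}$ with $m\geq1$. Then $g$ has a primitive $3^{m}$-th root of unity among its eigenvalues, so $3\mid|g|$ and $\Gamma$ contains an element of order $3$, contradicting the hypothesis.

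The one point that needs care is the index computation, and in particular the fact that $g$ lies in the Weyl group $W(E_6)$ rather than in the full orthogonal group of the lattice: it is precisely the triviality of the $W(E_6)$-action on $P/Q$ that produces the factor $[P:Q]=3$ and hence the obstruction to $2$-power order. Everything else is routine cyclotomic bookkeeping, and I note that the argument does not use Lemma \ref{5nonmin}; in fact it proves the stronger statement that every element of $W(E_6)$ acting without nonzero fixed vectors on $V$ has order divisible by $3$.
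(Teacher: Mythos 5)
Your proof is correct, and it takes a genuinely different route from the paper's. The paper handles order $5$ via its Lemma \ref{5nonmin} (the element $(12345)$ leaves the disjoint pair $E_6$, $Q_6$ invariant), and then observes that a cyclic group with no elements of order $3$ or $5$ has $2$-power order, so by parity it fixes one of the $27$ $(-1)$-curves, giving $\rho\left(\XX\right)^{\Gamma}>1$; this is pure counting. You instead work with the lattice $Q=K_{\XX}^{\perp}\cap\Pic(\XX)\cong E_6$ and its dual $P$: since $[P:Q]=3$ and $W(E_6)$ acts trivially on $P/Q$, any generator $g$ with no fixed vectors on $V$ satisfies $3\mid\det(1-g)=\prod_d\Phi_d(1)^{a_d}$, and the standard evaluation $\Phi_{p^k}(1)=p$, $\Phi_d(1)=1$ for $d>1$ with two distinct prime factors, forces an eigenvalue which is a primitive $3^m$-th root of unity. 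Your argument is uniform --- it does not need Lemma \ref{5nonmin}, since elements of order $5$ and of $2$-power order are excluded simultaneously --- and it proves the stronger assertion that every element of $W(E_6)$ acting on $K_{\XX}^{\perp}$ without nonzero fixed vectors has order divisible by $3$, which matches Table \ref{table1}: the only classes with no eigenvalue $1$ are $c_{10}$, $c_{11}$, $c_{12}$, $c_{13}$, $c_{14}$, all of order divisible by $3$. What the paper's route buys is elementarity: Sylow conjugacy plus an odd count of lines, with no lattice duality or cyclotomic input. The steps you leave implicit --- that the characteristic polynomial of $g|_V$ is a product of cyclotomic polynomials (finite order plus integrality), and that $[P:(1-g)P]=|\det(1-g)|$ because $1-g$ maps $P$ into $P$ (indeed into $Q$) --- are standard and correct.
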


\begin{proof}

If the subgroup $\Gamma$ contains an element of order $5$ then $\rho\left(\XX\right)^{\Gamma} > 1$ by Lemma \ref{5nonmin}.

If the subgroup $\Gamma$ does not contain elements of order $3$ and $5$, then its order is equal to $2^k$ for some $k$. Therefore there exists an $\Gamma$-invariant $(-1)$-curve since the number of $(-1)$-curves is odd. 
Thus $\rho\left(\XX\right)^{\Gamma}> 1$.

\end{proof}

Now we turn our attention to elements of order $3$ in $W(E_6)$.

\begin{lemma}
\label{3class}
There are $3$ conjugacy classes of elements of order $3$ in $W(E_6)$.
\end{lemma}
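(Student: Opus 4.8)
The plan is to take as the basic invariant the characteristic polynomial of $g$ on the $6$-dimensional reflection representation $V$, that is, the orthogonal complement of $K_{\XX}$ in $\Pic(\XX)\otimes\Q$. An element $g\in W(E_6)$ of order $3$ acts semisimply on $V$ with eigenvalues among $1,\omega,\bar\omega$; since the matrix is real, the multiplicities of $\omega$ and $\bar\omega$ agree, say both equal $k$, and the multiplicity of $1$ is $6-2k$. As $g\neq 1$ we have $k\in\{1,2,3\}$, so the characteristic polynomial is one of
\begin{equation*}
(t-1)^{4}(t^{2}+t+1),\qquad (t-1)^{2}(t^{2}+t+1)^{2},\qquad (t^{2}+t+1)^{3},
\end{equation*}
with traces $3,0,-3$ on $V$. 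Hence there are at most three possibilities for the class of $g$.

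For the lower bound I would exhibit one element of each type. Using the inclusion $\SG_6\subset W(E_6)$ of Remark~\ref{S6_in_W6}, the permutation $(123)$ fixes $L,E_4,E_5,E_6$ and cyclically permutes $E_1,E_2,E_3$, giving trace $3$ on $V$ (the case $k=1$), while $(123)(456)$ gives trace $0$ (the case $k=2$). No element of $\SG_6$ realizes $k=3$, because a product of at most two disjoint $3$-cycles uses all six indices yet still fixes a nonzero vector of $V$; instead I would use the Borel--de Siebenthal construction, deleting the branch node of the extended Dynkin diagram $\widetilde{E}_6$ to obtain a subsystem of type $3A_2$, whose product of the three $A_2$-Coxeter elements is an order-$3$ element with no nonzero fixed vector, i.e. of type $k=3$. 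Since the trace on $V$ is a class function, these three elements lie in three distinct conjugacy classes.

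It remains to prove the upper bound: that the characteristic polynomial already determines the class. The first step is to show that $g$ is supported on a standard subsystem. The moving space $V_\omega=(g-1)V$ is spanned by roots (a general feature of Weyl group elements), it decomposes into $g$-invariant root planes, and each such plane must be of type $A_2$, since this is the only rank-two subsystem of the simply-laced $E_6$ admitting a fixed-point-free order-$3$ isometry. Thus $g$ preserves a subsystem of type $kA_2$ and acts on each factor as a $3$-cycle in $W(A_2)=\SG_3$, so it is the product of the corresponding Coxeter elements. The claim then reduces to two facts: that all subsystems of $E_6$ of a fixed type $kA_2$ are $W(E_6)$-conjugate, and that inside a fixed $kA_2$ such an order-$3$ element is unique up to $W(kA_2)$ and the permutations of the factors. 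Concretely I would organize this through a Sylow $3$-subgroup $P$ of $N_{W(E_6)}(R)$ for $R$ of type $3A_2$: one checks $P\cong C_3\wr C_3$ of order $81$ (here $3^4$ is the full $3$-part of $|W(E_6)|=51840$), classifies the order-$3$ elements of $P$, and controls their fusion in $W(E_6)$.

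The main obstacle is exactly this uniqueness step, and especially the case $k=3$. Coarse combinatorial invariants do not separate it from $k=2$: both $(123)(456)$ and a $3A_2$-element act on the $27$ lines as nine disjoint $3$-cycles with no fixed line, so the permutation cycle type on the lines cannot distinguish the two classes, and one genuinely needs the finer trace on $\Pic(\XX)\otimes\Q$, equivalently the integral $\Z[\omega]$-module structure of the moving lattice $Q(E_6)\cap V_\omega$. Verifying that any two $3A_2$-elements with equal characteristic polynomial are conjugate — i.e. that this moving lattice is, up to isometry, the unique rank-three Hermitian $\Z[\omega]$-lattice that can occur — is the technical heart of the argument; the cases $k=1$ and $k=2$ are comparatively routine once the reduction to $kA_2$ subsystems is in place.
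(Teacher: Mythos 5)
Your lower bound is complete and correct: the trace on $V$ is a class function, your three elements have traces $3$, $0$, $-3$, and so $W(E_6)$ has at least three classes of order-$3$ elements. This half is actually tidier than the paper, which separates the three types only later (Lemmas \ref{3min} and \ref{3nonmin}), via minimality and counts of invariant lines. The genuine gap is the upper bound. Your eigenvalue count shows there are at most three possible \emph{characteristic polynomials}, but the sentence ``Hence there are at most three possibilities for the class of $g$'' does not follow: distinct conjugacy classes of a Weyl group can share a characteristic polynomial. For instance, in $W(D_4)$ the involutions $s_{e_1-e_2}s_{e_1+e_2}$ and $s_{e_1-e_2}s_{e_3-e_4}$ both have characteristic polynomial $(t-1)^2(t+1)^2$ yet are not conjugate, since a signed permutation takes roots with disjoint supports to roots with disjoint supports; these are Carter's classes $(A_1^2)'$ and $(A_1^2)''$ \cite{Car72}. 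So ``same characteristic polynomial implies conjugate'' is precisely what must be proved here, and your proposal never proves it: the decomposition of the moving space into $g$-invariant planes each meeting the root system in an $A_2$ is asserted, not shown --- and in the crucial case $k=3$ the moving space is all of $V$ and $R\cap V_\omega$ is all of $E_6$, so the reduction to a proper $3A_2$ subsystem is circular without Carter's machinery of admissible diagrams; the conjugacy of all $kA_2$ subsystems is likewise asserted; and the fusion analysis in the Sylow $3$-subgroup, which you yourself flag as ``the technical heart,'' is left undone.

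Note that your closing suggestion, if executed, is essentially the paper's proof, and it closes the gap far more cheaply than the Hermitian-lattice route you outline. Since $|W(E_6)|=2^7\cdot 3^4\cdot 5$, the Sylow theorem puts every order-$3$ element, up to conjugacy, inside one fixed subgroup of order $81$, which the paper realizes concretely as automorphisms of the Fermat cubic $x^3+y^3+z^3+t^3=0$: the coordinate scalings by $\omega$ together with the cyclic shift of $x,y,z$ (this is exactly your $C_3\wr C_3$). Inside this explicit group the required enumeration is short: the diagonal elements are visibly of the three types $I$, $II$, $III$; the order-$3$ elements involving the cyclic shift are all diagonal-conjugate to the pure shift; and the pure shift is conjugate in $W(E_6)$ to a type $III$ scaling. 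Replacing your uniqueness step by this Sylow-plus-enumeration argument would make your proof complete; as written, the upper bound --- and hence the lemma --- is not established.
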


\begin{proof}
Since, by the Sylow theorem, all subgroups of order $81$ in $W(E_6)$ are conjugate, we have to find any subgroup of order $81$ in $W(E_6)$, and then classify elements of order $3$ in this group up to conjugation in $W(E_6)$.

Since the automorphism group of any given cubic surface maps to $W(E_6)$, to give a description of elements of order $3$ in $W(E_6)$ it is enough to find a subgroup of order $81$ in the group $\Aut(\XX)$ for some cubic $\XX$. We choose the Fermat cubic surface over $\CC$:
$$
x^3 + y^3 + z^3 + t^3 = 0.
$$

We see that the group of order $81$, generated by
$$
(x : y : z : t) \mapsto (\omega x : y : z : t), \qquad (x : y : z : t) \mapsto (x : \omega y : z : t),
$$
$$
(x : y : z : t) \mapsto (x : y : \omega z : t), \qquad (x : y : z : t) \mapsto ( y : z : x : t),
$$
\noindent acts on this surface.

The elements corresponding to $(x : y : z : t) \mapsto ( y : z : x : t)$, and \mbox{$(x : y : z : t) \mapsto ( \omega x : \omega^2 y : z : t)$} are conjugate in $W(E_6)$. Therefore any element of order $3$ in $W(E_6)$ is conjugate to an element corresponding to one of the following automorphisms on the Fermat cubic:
\begin{itemize}
\item[($I$)] $(x : y : z : t) \mapsto (\omega x : y : z : t)$;
\item[($II$)] $(x : y : z : t) \mapsto (\omega x : \omega y : z : t)$;
\item[($III$)] $(x : y : z : t) \mapsto (\omega x : \omega^2 y : z : t)$.
\end{itemize}

\end{proof}

\begin{remark}
\label{3notation}
In what follows we refer to the three conjugacy classes of elements of order~$3$ in $W(E_6)$ as type $I$, type $II$ and type $III$ respectively.
\end{remark}

\begin{remark}
\label{9min}
There is only one cyclic subgroup in $W(E_6)$ of order $9$ up to conjugation, namely the subgroup generated by the element corresponding to the trasformation 
\mbox{$(x : y : z : t) \mapsto ( y : z : \omega x : t)$}. This group contains a subgroup of order $3$ generated by an element of type $I$.
\end{remark}

\begin{lemma}
\label{9max}
A subgroup $\Gamma$ of order $9$ in $W(E_6)$ is not contained in a bigger cyclic subgroup.
\end{lemma}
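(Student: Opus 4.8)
The plan is to argue by contradiction via eigenvalues in the six-dimensional reflection representation $V=\{v\in\Pic(\XX)\otimes\Q : v\cdot K_{\XX}=0\}$, the orthogonal complement of $K_{\XX}$, on which $W(E_6)$ acts as the Weyl group of $E_6$. Suppose $\Gamma=\langle g\rangle$ has order $9$ and is contained in a cyclic group $\Gamma'=\langle h\rangle$ with $N:=|\Gamma'|>9$. Then $9\mid N$, and the unique subgroup of order $9$ in the cyclic group $\Gamma'$ is $\Gamma$, generated by $g_0:=h^{N/9}$ (which has order $N/\gcd(N,N/9)=9$).

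First I would record the shape of an order-$9$ element. Since $W(E_6)$ acts on the integral lattice $\Pic(\XX)$ fixing $K_{\XX}$, every $w\in W(E_6)$ has integral characteristic polynomial on $V$. If $w$ has order exactly $9$, then some eigenvalue is a primitive $9$-th root of unity, so the cyclotomic polynomial $\Phi_9(x)=x^6+x^3+1$ divides the characteristic polynomial of $w$ on $V$; as $\deg\Phi_9=\varphi(9)=6=\dim V$, the two coincide. Hence every order-$9$ element of $W(E_6)$ has all six primitive $9$-th roots of unity as its eigenvalues on $V$.

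Applying this to $g_0=h^{N/9}$, I find that for every eigenvalue $\eta$ of $h$ the power $\eta^{N/9}$ is a primitive $9$-th root of unity, so $9\mid\ord(\eta)$. Because the characteristic polynomial of $h$ is integral, the eigenvalues of $h$ form a union of full Galois orbits of primitive $d$-th roots of unity, each of size $\varphi(d)$, with $9\mid d$ and $\varphi(d)\le\dim V=6$. The only such $d$ are $9$ and $18$, since $\varphi(27)=18$, $\varphi(36)=12$ and $\varphi(45)=24$ are all too large. A single orbit already fills the six-dimensional space $V$, so the eigenvalues are homogeneous: either all are primitive $9$-th roots (giving $\ord(h)=9$) or all are primitive $18$-th roots. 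As $N>9$, the order of $h$ is $18$ and the characteristic polynomial of $h$ on $V$ equals $\Phi_{18}(x)=x^6-x^3+1$.

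The main obstacle is exactly this surviving order-$18$ case, which I expect to eliminate using integrality of the $W(E_6)$-action. With characteristic polynomial $\Phi_{18}$, every eigenvalue of $h^9$ on $V$ is $(-1)$, so $h^9$ acts as $-\operatorname{id}$ on $K_{\XX}^{\perp}$ while fixing $K_{\XX}$; that is, $h^9$ equals the map $v\mapsto \tfrac{2(v\cdot K_{\XX})}{K_{\XX}^2}\,K_{\XX}-v$ on $\Pic(\XX)\otimes\Q$. Since $K_{\XX}^2=3$ and $E_1\cdot K_{\XX}=-1$, this operator sends $E_1$ to $2L-E_1-\tfrac{2}{3}\sum_i E_i$, which is not in $\Pic(\XX)$. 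Thus $h^9$ would fail to preserve the integral lattice $\Pic(\XX)$, contradicting $h^9\in W(E_6)$; equivalently, $-\operatorname{id}\notin W(E_6)$. Hence no cyclic group properly contains $\Gamma$, and a subgroup of order $9$ is maximal among cyclic subgroups of $W(E_6)$.
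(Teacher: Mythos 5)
Your proof is correct, and it takes a genuinely different route from the paper's. The paper argues combinatorially: it writes down an explicit order-$9$ element $g$ via its three orbits of length $9$ on the $27$ lines and checks the intersection numbers $E_1\cdot g^4E_1=E_2\cdot gE_2=E_3\cdot g^2E_3=1$ (while the analogous products such as $E_1\cdot gE_1$ vanish), so that no element commuting with $g$ can permute the three orbits; a generator of a cyclic overgroup commutes with $g$, hence preserves each orbit, hence acts on each orbit as a power of the $9$-cycle and so has order dividing $9$. You instead argue with eigenvalues on $K_{\XX}^{\perp}$: faithfulness of the reflection representation plus integrality of characteristic polynomials force an order-$9$ element to have characteristic polynomial $\Phi_9$, hence force the generator $h$ of a strictly bigger cyclic overgroup to have characteristic polynomial $\Phi_{18}$, so that $h^9$ would act as $-\operatorname{id}$ on $K_{\XX}^{\perp}$; and you correctly exclude this because the corresponding operator $v\mapsto \tfrac{2(v\cdot K_{\XX})}{K_{\XX}^2}K_{\XX}-v$ on $\Pic(\XX)\otimes\Q$ sends $E_1$ to the non-integral class $-\tfrac{2}{3}K_{\XX}-E_1$, whereas every element of $W(E_6)$ preserves the lattice $\Pic(\XX)$, being generated by reflections $v\mapsto v+(v\cdot D)D$ in integral classes $D$ with $D^2=-2$. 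Your intermediate steps are all sound: $9\mid\ord(\eta)$ for every eigenvalue $\eta$ of $h$, the reduction to $d\in\{9,18\}$ from $9\mid d$ and $\varphi(d)\le 6$, and the lattice computation; your conclusion is also corroborated by Table~\ref{table1}, which lists no class with all eigenvalues equal to $-1$ and no elements of order $18$ or $27$. As for what each approach buys: the paper's proof is elementary and stays inside the $27$-lines/intersection-form formalism used throughout the paper (the proof of Proposition~\ref{minclass} runs the same kind of orbit computation for an order-$12$ element); yours needs no explicit element or orbit bookkeeping, proves the stronger statement that $W(E_6)$ contains no element whatsoever of order $18$ or $27$, and isolates the structural fact that $-\operatorname{id}\notin W(E_6)$, established here by exploiting that the unimodular lattice $\Pic(\XX)$ is strictly larger than the direct sum of $\Z K_{\XX}$ and the root lattice.
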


\begin{proof}
Let $g\in W(E_6)$ be an element such that orbits of $(-1)$-curves for the group generated by $g$ are the following:
$$
E_1 \rightarrow L_{45} \rightarrow L_{35} \rightarrow Q_2 \rightarrow Q_4 \rightarrow L_{16} \rightarrow L_{12} \rightarrow E_5 \rightarrow L_{24} \rightarrow E_1;
$$
$$
E_2 \rightarrow Q_6 \rightarrow L_{36} \rightarrow Q_3 \rightarrow E_4 \rightarrow L_{14} \rightarrow L_{23} \rightarrow L_{46} \rightarrow L_{25} \rightarrow E_2;
$$
$$
E_3 \rightarrow E_6 \rightarrow L_{34} \rightarrow Q_1 \rightarrow L_{56} \rightarrow L_{15} \rightarrow L_{13} \rightarrow Q_5 \rightarrow L_{26} \rightarrow E_3.
$$

By Remark~\ref{9min} we can assume that $g$ is a generator of $\Gamma$. It is straightforward to check that 
$$
E_1 \cdot g^4E_1 = E_2 \cdot gE_2 = E_3 \cdot g^2E_3 = 1;
$$
$$
E_1 \cdot gE_1 = E_1 \cdot g^2E_1 = E_2 \cdot g^2 E_2 = E_2 \cdot g^4 E_2 = E_3 \cdot gE_3 = E_3 \cdot g^4E_3 = 0.
$$
\noindent Therefore these three orbits cannot be permuted by any element $h$ commuting with $g$ since the element $hgh^{-1}$ does not preserve the intersection form. Thus a subgroup \mbox{$\Z / 9\Z \subset W(E_6)$} is not contained in a bigger cyclic subgroup.
\end{proof}

By Lemma \ref{2nonmin}, if $\rho\left(\XX\right)^{\Gamma} = 1$, then $\Gamma$ contains an element of order $3$. This observation is a key to the classification of such subgroups.

\begin{proposition}
\label{minclass}
Let $X$ be a cubic surface over $\F_q$. Then $X$ is minimal if and only if the image~$\Gamma$ of the Galois group $\Gal\left(\overline{\F}_q / \F_q\right)$ in the Weyl group \mbox{$W(E_6)$} satisfy one of the following conditions:
\begin{itemize}
\item[$(c_{11})$] the order of $\Gamma$ is $3$, and $\Gamma$ is generated by an element of type $I$;
\item[$(c_{12})$] the order of $\Gamma$ is $6$, and $\Gamma$ contains an element of type $I$;
\item[$(c_{14})$] the order of $\Gamma$ is $9$, and $\Gamma$ contains an element of type $I$;
\item[$(c_{13})$] the order of $\Gamma$ is $12$, and $\Gamma$ contains an element of type $I$;
\item[$(c_{10})$] the order of $\Gamma$ is $6$, and $\Gamma$ is conjugate to the group generated by $(123)(456)$ and $cs$, where $c=(14)(25)(36)$, and 
	$s$ is an element of $W(E_6)$ such that $sE_i = Q_i$, $sQ_i = E_i$, and $sL_{ij} = L_{ij}$.
\end{itemize}
If two cyclic subgroups $\Gamma_1$ and $\Gamma_2$ satisfy one of these conditions, then $\Gamma_1$ and $\Gamma_2$ are conjugate.
\end{proposition}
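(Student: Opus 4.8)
The plan is to restate minimality as the absence of a suitable invariant configuration of $(-1)$-curves and then to run a finite case analysis governed by the order of $\Gamma$ and by the conjugacy type of its subgroup of order $3$. Recall from the text that $X$ fails to be minimal precisely when there is a $(-1)$-curve $D$ with $D = \sigma D$ or $D\cdot\sigma D = 0$ for every $\sigma\in\Gamma$, i.e. when $\Gamma$ preserves a set of pairwise disjoint $(-1)$-curves; equivalently, using the orthogonal splitting $\Pic(\XX)\otimes\Q = \Q K_{\XX}\oplus(E_6\otimes\Q)$ together with $\Pic(X)=\Pic(\XX)^{\Gamma}$, when a generator $g$ of $\Gamma$ fixes a nonzero vector of $E_6\otimes\Q$. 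I will use the combinatorial form for constructions and for non-minimality, exactly as in Lemmas \ref{5nonmin}, \ref{2nonmin}, and \ref{9max}.

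First I would bound $|\Gamma|$. A minimal $\Gamma$ contains an element of order $3$ (Lemma \ref{2nonmin}) and no element of order $5$ (Lemma \ref{5nonmin}); since the maximal order of an element of $W(E_6)$ is $12$, the order of $g$ is a divisor of $12$ of the form $2^a3^b$ with $b\ge 1$, hence $|\Gamma|\in\{3,6,9,12\}$. The case $|\Gamma| = 9$ is settled at once: by Remark \ref{9min} there is a unique such subgroup up to conjugacy, it contains an element of type $I$, and an orbit computation in the style of Lemma \ref{9max} shows it preserves no set of pairwise disjoint $(-1)$-curves, so it is minimal; this is $(c_{14})$.

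Next I would fix the conjugacy type of the unique order-$3$ subgroup $\Gamma_3\subset\Gamma$, which by Lemma \ref{3class} is $I$, $II$ or $III$. A Lefschetz count on the Fermat cubic (the fixed loci of the three sample automorphisms are an elliptic curve, six points and three points, with Euler characteristics $0$, $6$ and $3$) gives $\dim(E_6\otimes\Q)^{\Gamma_3} = 0$, $4$, $2$ for types $I$, $II$, $III$. Writing $g = g_2g_3$ for the $2$- and $3$-parts of the generator, one checks $(E_6\otimes\Q)^{g} = (E_6\otimes\Q)^{g_2}\cap(E_6\otimes\Q)^{g_3}$ since $\gcd(2,3)=1$. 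If $\Gamma_3$ has type $I$ then $(E_6\otimes\Q)^{g_3} = 0$, so $X$ is automatically minimal; running through the admissible orders $3$, $6$, $12$ and exhibiting explicit generators inside $\SG_6\subset W(E_6)$ (together with the element $s$ when needed) yields exactly the classes $(c_{11})$, $(c_{12})$, $(c_{13})$, whose generators have type $I$ cube, square, and fourth power respectively. When $\Gamma_3$ has type $II$ or $III$ the $2$-part must act without fixed vectors on the $4$- respectively $2$-dimensional space $(E_6\otimes\Q)^{\Gamma_3}$; here I would show that type $II$ can never be completed to a fixed-point-free cyclic element, while type $III$ can be completed only in order $6$, producing the group of $(c_{10})$ generated by $(123)(456)$ and $cs$, for which the orbit criterion again confirms minimality.

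The last point is uniqueness. The five groups are pairwise non-conjugate because their generators have pairwise distinct characteristic polynomials on $E_6\otimes\Q$, namely $\Phi_3^3$, $\Phi_3\Phi_6^2$, $\Phi_9$, $\Phi_3\Phi_{12}$ and $\Phi_2^2\Phi_3\Phi_6$ (equivalently, distinct polynomials $P_i$), and conjugate generators give conjugate cyclic subgroups. To finish one must check that each listed condition pins down a single conjugacy class in $W(E_6)$, for which the Sylow reductions used in Lemmas \ref{3class} and \ref{9min}, together with the conjugacy-class data recorded in the appendix, suffice. I expect the genuine difficulty to lie exactly in these realizability and uniqueness statements — excluding type $II$, selecting the single minimal class in each of the orders $6$ and $12$, and showing that no further cyclotomic product of degree $6$ occurs as the characteristic polynomial of a fixed-point-free element — since this is where one must use the fine structure of $W(E_6)$ rather than the soft fixed-point bookkeeping that handles everything else.
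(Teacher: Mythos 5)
Your framework is sound, and parts of it are genuinely different from the paper: the criterion ``minimal $\Leftrightarrow$ the generator has no eigenvalue $1$ on $E_6\otimes\Q$'' together with the Lefschetz computation of the fixed-space dimensions $0$, $4$, $2$ for types $I$, $II$, $III$ is correct and is a nice replacement for the paper's quotient argument (Lemma \ref{3min}) and line-counting. But the proposal has one step that is simply false and several steps that are asserted rather than proved, and these are exactly the steps that carry the weight of the proposition. The false step: you propose to realize $(c_{11})$, $(c_{12})$, $(c_{13})$ by ``explicit generators inside $\SG_6\subset W(E_6)$ (together with the element $s$ when needed)''. Since $s$ commutes with every element of $\SG_6$ (check this on $E_i$, $Q_i$, $L_{ij}$), the subgroup in question is $\SG_6\times\langle s\rangle$; every element of odd order in it lies in $\SG_6$, and every even power of every element lies in $\SG_6$. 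But the order-$3$ elements of $\SG_6$ are conjugate to $(123)$ or $(123)(456)$, i.e.\ have types $II$ and $III$ (Lemma \ref{3nonmin}). Hence $\SG_6\times\langle s\rangle$ contains no element of type $I$, no element whose square or fourth power has type $I$, and in fact no element of order $9$ or $12$ at all (its maximal element order is $6$). So your existence argument fails for precisely the three classes it was meant to produce. The paper instead realizes type $I$ geometrically on the Fermat cubic and writes down the order-$9$ and order-$12$ elements by their explicit orbits on the $27$ lines.

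The remaining hard steps are deferred, not done. ``Type $II$ can never be completed'' is the paper's Lemma \ref{typeIInonmin} (the $9$ lines fixed by $(123)$ form an odd set, so any $2$-group commuting with it fixes one of them); ``type $III$ completes only in order $6$ to the $c_{10}$ group'' is Lemma \ref{typeIIImin}, whose proof (the triangular-prism analysis of the six orbits, plus the uniqueness of the lift of $\widetilde{cs}$) is the longest argument in the section; and the uniqueness of the order-$6$ and order-$12$ classes containing a type $I$ element is proved in the paper by hand, via intersection-number arguments that also rule out order $24$. You defer all of these to ``the conjugacy-class data recorded in the appendix''. That is not wrong mathematically, but it makes the rest of your machinery pointless: once one grants the full table of the $25$ classes with their eigenvalues, the entire proposition is a one-line lookup (the minimal classes are exactly those with no eigenvalue $1$, namely $c_{10},\dots,c_{14}$, which is how it ``immediately follows'' from Manin's table), and the Lefschetz bookkeeping buys nothing. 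The paper's stated purpose is to give a proof that does \emph{not} lean on that classification, so the content of the proposition is exactly the part your proposal leaves open --- as you yourself acknowledge in the last paragraph. Two small slips to fix as well: $9$ is not a divisor of $12$, so your order bound needs the list of element orders of the form $2^a3^b$ rather than just the maximal order; and for $|\Gamma|=3$ the type $I$ element is the generator itself, not its cube.
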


This proposition immediately follows from \cite[IV.9. Table 1]{Man74}, where minimal cubic surfaces correspond to classes with index $0$.

First, we prove several lemmas.

\begin{lemma}
\label{3min}
If a cyclic subgroup $\Gamma \subset W(E_6)$ is generated by an element of type $I$, \mbox{then $\rho\left(\XX\right)^{\Gamma} = 1$}.
\end{lemma}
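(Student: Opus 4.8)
The statement is equivalent to $\dim_{\Q}\bigl(\Pic(\XX)\otimes\Q\bigr)^{\Gamma}=1$, and since the canonical class $K_{\XX}$ is always $\Gamma$-invariant it suffices to show that a generator $g$ of $\Gamma$ fixes no nonzero class orthogonal to $K_{\XX}$, i.e. that $g$ acts on the $6$-dimensional root space $E_6\subset\Pic(\XX)\otimes\Q$ without eigenvalue $1$. Because $g$ has order $3$, I would extract this from a single trace computation, using
$$
\dim_{\Q}\bigl(\Pic(\XX)\otimes\Q\bigr)^{\Gamma}=\frac{1}{3}\bigl(\operatorname{tr}(1)+\operatorname{tr}(g)+\operatorname{tr}(g^{2})\bigr)=\frac{1}{3}\bigl(7+\operatorname{tr}(g)+\operatorname{tr}(g^{2})\bigr).
$$
As the representation of $\Gamma$ on $\Pic(\XX)\otimes\Q$ is defined over $\Q$, its character is real, so $\operatorname{tr}(g^{2})=\operatorname{tr}(g^{-1})=\operatorname{tr}(g)$; hence everything reduces to computing $\operatorname{tr}(g)$, and the claim $\rho(\XX)^{\Gamma}=1$ becomes exactly the assertion $\operatorname{tr}(g)=-2$.

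The number $\rho(\XX)^{\Gamma}$ and the trace $\operatorname{tr}(g)$ depend only on the conjugacy class of $g$ in $W(E_6)$, so I may replace $g$ by the concrete type $I$ automorphism $(x:y:z:t)\mapsto(\omega x:y:z:t)$ of the Fermat cubic $X_{0}$ over $\CC$ from Lemma \ref{3class} and compute the trace of the induced map on $\Pic(X_{0})\otimes\Q\cong H^{2}(X_{0},\Q)$. For this I would invoke the topological Lefschetz fixed point formula: for a finite order automorphism of a smooth projective complex surface the Lefschetz number equals the Euler characteristic of the fixed locus. The fixed locus of $g$ on $\Pro^{3}_{\CC}$ is the isolated point $(1:0:0:0)$ together with the hyperplane $\{x=0\}$, and the point does not lie on $X_{0}$, so
$$
X_{0}^{g}=\{x=0\}\cap X_{0}=\{y^{3}+z^{3}+t^{3}=0\},
$$
a smooth plane cubic, i.e. a curve of genus $1$ with Euler characteristic $0$.

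Since $X_{0}$ is a rational surface we have $b_{0}=b_{4}=1$, $b_{1}=b_{3}=0$, $b_{2}=7$, and $g$ acts trivially on $H^{0}$ and $H^{4}$; therefore the Lefschetz number equals $2+\operatorname{tr}\bigl(g\mid H^{2}\bigr)$, and the formula gives $2+\operatorname{tr}(g)=\chi(X_{0}^{g})=0$, whence $\operatorname{tr}(g)=-2$. Plugging this into the displayed averaging formula yields $\dim_{\Q}(\Pic(\XX)\otimes\Q)^{\Gamma}=\tfrac{1}{3}(7-2-2)=1$, as wanted; note also that for order $3$ the value $\operatorname{tr}(g)=-2$ forces the eigenvalue multiplicities of $(1,\omega,\omega^{2})$ to be $(1,3,3)$, so $g$ is indeed fixed-point-free on $E_6$. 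The only point requiring care is the justification of the reduction to the complex Fermat model and of the Lefschetz computation; a purely combinatorial alternative would be to realize $X_{0}$ as a blow-up via six mutually disjoint $(-1)$-curves, use the permutation action of $g$ on the $27$ lines to write the matrix of $g$ in the basis $L,E_{1},\dots,E_{6}$, and read off $\operatorname{tr}(g)$ directly, the only obstacle there being the bookkeeping of identifying each image $g(E_{i})$ among the $27$ lines.
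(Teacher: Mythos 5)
Your proof is correct, and it reaches the conclusion by a genuinely different computation than the paper, although both arguments share the same starting reduction: replace $\Gamma$ by the group generated by an explicit type $I$ automorphism of the complex Fermat cubic (legitimate because $\rho(\XX)^{\Gamma}$ depends only on the conjugacy class in $W(E_6)$), and both exploit the same geometric input, namely that the fixed locus of such an automorphism is a smooth anticanonical elliptic curve. From there the paper argues via quotient geometry: since the fixed locus is a divisor, the quotient $\XX/G$ is smooth, the Hurwitz formula gives $K_{\XX/G}^2=9$, hence $\XX/G\cong\Pro^2_{\CC}$ and $\rho(\XX)^{G}=\rho(\XX/G)=1$. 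You instead compute the character of the action: the topological Lefschetz fixed-point theorem for finite-order automorphisms gives $2+\operatorname{tr}\bigl(g\mid H^2\bigr)=\chi(X_0^g)=0$, so $\operatorname{tr}(g)=-2$ on $\Pic(\XX)\otimes\Q$, and averaging over the three group elements yields invariant dimension $\tfrac{1}{3}(7-2-2)=1$. Your route buys the complete eigenvalue data $(1,\omega,\omega,\omega,\omega^2,\omega^2,\omega^2)$ --- consistent with row $c_{11}$ of Table \ref{table1} --- and avoids two facts the paper needs implicitly: that $\rho(\XX)^G=\rho(\XX/G)$ for a smooth finite quotient, and that a smooth rational surface with $K^2=9$ is $\Pro^2$. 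The paper's route buys the isomorphism $\XX/G\cong\Pro^2$ itself, which is reused later (it underlies the projection of a cyclic cubic surface to the plane in Section 3). The details you flagged as needing care are indeed fine: the point $(1:0:0:0)$ does not lie on the Fermat cubic, so the fixed locus is exactly the smooth plane cubic $y^3+z^3+t^3=0$; and $\operatorname{tr}(g^2)=\operatorname{tr}(g)$ holds because the representation is defined over $\Q$, so its character is rational and hence invariant under $g\mapsto g^{-1}$.
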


\begin{proof}
As before, we can compute $\rho\left(\XX\right)^{\Gamma}$ for the Fermat cubic $\XX$ over $\CC$, and the geometric action of an element of type $I$. The quotient of $\XX$ by the group $G$ generated by \mbox{$(x : y : z : t) \mapsto (x : y : z : \omega t)$} is nonsingular, since the set of fixed points of $G$ is the curve with equation $t = 0$. Thus the ramification divisor of the morphism $\XX\to \XX/G$ is $-2K_{\XX}$ and, by the Hurwitz formula,
$$
K_{\XX / G}^2 = \frac{\left( 3K_{\XX}\right)^2}{3} = 9.
$$
Thus $\XX / G$ is isomorphic to $\Pro^2_{\CC}$, and $\rho(\XX)^G = \rho(\XX / G) = 1$. 
\end{proof}

\begin{lemma}
\label{3nonmin}
The elements of type $II$ and $III$ are conjugate in $W(E_6)$ to $(123)$ and $(123)(456)$ in $\SG_6 \subset W(E_6)$ respectively.
In particular, if a cyclic subgroup $\Gamma \subset W(E_6)$ is generated by an element of type $II$ or $III$, then $\rho\left(\XX\right)^{\Gamma}> 1$.
\end{lemma}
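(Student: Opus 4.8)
The plan is to use the fact, established in Lemma~\ref{3class}, that $W(E_6)$ has exactly three conjugacy classes of elements of order $3$, together with the observation that the trace of an element on $\Pic(\XX)\otimes\Q$ is a conjugacy invariant. Since I will exhibit three pairwise distinct traces for the classes of type $I$, $II$, $III$, the trace becomes a \emph{complete} invariant among order-$3$ elements; it then suffices to compute the traces of the candidate permutations $(123)$ and $(123)(456)$ and match them with types $II$ and $III$.

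First I would compute the traces of the $\SG_6$-elements directly from the permutation action on the basis $L, E_1,\dots,E_6$ of $\Pic(\XX)$ described in Remark~\ref{S6_in_W6}: the class $L$ is fixed and contributes $1$, a nontrivial cycle among the $E_i$ contributes $0$, and a fixed $E_i$ contributes $1$. Thus $(123)$ fixes $E_4,E_5,E_6$ and has trace $1+3=4$, while $(123)(456)$ fixes no $E_i$ and has trace $1$. By Lemma~\ref{3min} the remaining class, type $I$, has $\rho(\XX)^\Gamma=1$, which forces trace $-2$, so the three traces $-2,1,4$ are indeed distinct.

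Next I would compute the traces of type $II$ and type $III$ on the Fermat cubic $\XX$ over $\CC$ by the topological Lefschetz fixed-point formula. Since $\XX$ is rational one has $H^0(\XX,\Q)=H^4(\XX,\Q)=\Q$ with trivial action, $H^2(\XX,\Q)=\Pic(\XX)\otimes\Q$, and the odd cohomology vanishes; hence for an automorphism $\phi$ of finite order,
\[
\chi(\mathrm{Fix}(\phi)) = 2 + \mathrm{tr}\big(\phi\mid\Pic(\XX)\otimes\Q\big).
\]
For type $II$, $(x:y:z:t)\mapsto(\omega x:\omega y:z:t)$, the fixed locus in $\Pro^3_{\CC}$ is the pair of disjoint lines $\{z=t=0\}$ and $\{x=y=0\}$, each meeting $\XX$ in three points, so $\chi(\mathrm{Fix})=6$ and the trace is $4$. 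For type $III$, $(x:y:z:t)\mapsto(\omega x:\omega^2 y:z:t)$, the fixed locus consists of the line $\{x=y=0\}$, meeting $\XX$ in three points, together with the two points $(1:0:0:0)$ and $(0:1:0:0)$, which do not lie on $\XX$; thus $\chi(\mathrm{Fix})=3$ and the trace is $1$. Matching traces then gives type $II\sim(123)$ and type $III\sim(123)(456)$.

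The step I expect to be the main obstacle is precisely the need for the trace rather than a cruder invariant: the count of $(-1)$-curves fixed by $\phi$ does \emph{not} separate the three classes, since both type $I$ and type $III$ fix no line on the Fermat cubic, so only the refined information carried by $\chi(\mathrm{Fix})$ distinguishes them. Once the conjugacies are established, the final assertion is immediate. Using $\rho(\XX)^{\langle\phi\rangle}=\tfrac13\big(7+2\,\mathrm{tr}(\phi)\big)$ one gets $5>1$ for type $II$ and $3>1$ for type $III$; alternatively, via the minimality criterion of Section~$2$, the element $(123)$ fixes the $(-1)$-curve $E_4$, while $(123)(456)$ permutes the pairwise disjoint $(-1)$-curves $E_1,E_2,E_3$, so in either case a $\Gamma$-invariant contractible configuration exists and $\rho(\XX)^\Gamma>1$.
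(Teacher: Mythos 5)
Your proof is correct, but it takes a genuinely different route from the paper. The paper distinguishes the three order-$3$ classes purely combinatorially, using the configuration of the $27$ lines: it rules out type $I$ for $(123)$ and $(123)(456)$ because these permutations leave the six disjoint curves $E_i$ invariant (contradicting $\rho\left(\XX\right)^{\Gamma}=1$ from Lemma~\ref{3min}), separates $(123)$ from $(123)(456)$ by counting invariant $(-1)$-curves ($9$ versus $0$), and then pins down type $II$ as $(123)$ by exhibiting the invariant line $x=-y$, $z=-t$ on the Fermat cubic under $(x:y:z:t)\mapsto(\omega x:\omega y:z:t)$, noting $(123)(456)$ has no invariant line. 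You instead use the trace on $\Pic(\XX)\otimes\Q$ as a complete invariant of the three classes, computing it for $(123)$ and $(123)(456)$ from the permutation action ($4$ and $1$), for type $I$ from Lemma~\ref{3min} via $\rho=\tfrac13\bigl(7+2\,\mathrm{tr}\bigr)$ (giving $-2$), and for types $II$ and $III$ by the topological Lefschetz fixed-point formula on the Fermat cubic, where the fixed loci are the eigenspace lines and points of the diagonal matrices intersected with the cubic ($6$ and $3$ points, hence traces $4$ and $1$); all these computations check out, and they agree with the eigenvalue data for $c_6$ and $c_9$ in Table~\ref{table1}. Your approach costs more machinery (cohomology of rational surfaces and the Lefschetz theorem for finite-order automorphisms) but replaces the paper's ad hoc search for an invariant line by a mechanical eigenspace computation, and it yields the invariant Picard numbers $5$ and $3$ directly, so the final minimality assertion is immediate; the paper's argument is more elementary but leans on a careful enumeration of how the permutations and the Fermat automorphisms act on the line configuration. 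Your observation that fixed-line counts alone cannot separate type $I$ from type $III$ (both fix no line) is also accurate and is precisely why the paper needs Lemma~\ref{3min} and the invariant sextuple of disjoint curves for that step.
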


\begin{proof}
Note that the elements $(123)$ and $(123)(456)$ are not conjugate to an element of type~$I$ by Lemma \ref{3min} since the six disjoint curves $E_i$ are invariant under $(123)$ and $(123)(456)$. Moreover, these two elements are not conjugate to each other since the element $(123)$ has $9$ invariant $(-1)$-curves and the element $(123)(456)$ has no invariant $(-1)$-curves. Therefore these two elements have different types.

Finally, on the Fermat cubic surface the line given by $x = -y$, $z = -t$ is invariant under the action $(x : y : z : t) \mapsto (\omega x : \omega y : z : t)$, and the element $(123)(456)$ has no invariant lines. Therefore an element of type $II$ is conjugate to $(123)$ and an element of type $III$ is conjugate to $(123)(456)$.
\end{proof}

\begin{lemma}
\label{typeIInonmin}
If a cyclic subgroup $\Gamma\subset W(E_6)$ contains an element of type $II$ \mbox{then $\rho\left(\XX\right)^{\Gamma} > 1$}.
\end{lemma}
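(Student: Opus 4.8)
The plan is to reduce to a concrete representative and then track the action of $\Gamma$ on a distinguished set of nine $(-1)$-curves. By Lemma~\ref{3nonmin} a type $II$ element is conjugate to $(123)\in\SG_6$, so after conjugation I may assume that $(123)\in\Gamma$. Writing $\Gamma=\langle g\rangle$ and $n=|\Gamma|$, the generator $g$ commutes with $(123)$, and since $\Gamma$ is cyclic, $(123)=g^{n/3}$ generates the unique subgroup of order $3$. A direct inspection shows that $(123)$ fixes exactly the nine $(-1)$-curves
\[
E_4,\ E_5,\ E_6,\ Q_4,\ Q_5,\ Q_6,\ L_{45},\ L_{46},\ L_{56},
\]
whose sum equals $-3K_{\XX}$. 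As $g$ commutes with $(123)$ it permutes this set, and being an element of $W(E_6)$ it preserves the intersection form. First I would record the combinatorics of these nine curves: arranging them in a $3\times 3$ array with rows $\{E_4,E_5,E_6\}$, $\{Q_4,Q_5,Q_6\}$, $\{L_{45},L_{46},L_{56}\}$ and columns indexed by $4,5,6$, one checks that two of them meet if and only if they lie in different rows \emph{and} different columns. Thus $g$ acts as an automorphism of the $3\times 3$ rook's graph, whose automorphism group is $(\SG_3\times\SG_3)\rtimes\Z/2\Z$ of order $72$, the two $\SG_3$ factors permuting rows and columns and the last factor transposing the two partitions.

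The crucial step is to control the order of the induced permutation $\bar g$. I claim that $9\nmid n$: otherwise $\Gamma$ would contain a cyclic subgroup of order $9$, whose subgroup of order $3$ is necessarily $\langle(123)\rangle$; but by Remark~\ref{9min} the order-$3$ subgroup of any cyclic group of order $9$ is generated by an element of type $I$, contradicting that $(123)$ has type $II$. Hence the $3$-part of $n$ equals $3$, and since $\bar g^{\,n/3}=\overline{(123)}=\mathrm{id}$, the permutation $\bar g$ has order prime to $3$.

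It then remains to show that such a $\bar g$ always fixes a row setwise or fixes one of the nine curves. In the automorphism group above, an element of order prime to $3$ either preserves the row/column partition — in which case its action on the three rows, having order prime to $3$, fixes at least one row setwise, so that one of the sums $E_4+E_5+E_6$, $Q_4+Q_5+Q_6$, $L_{45}+L_{46}+L_{56}$ is a $g$-invariant class — or else it transposes the two partitions, in which case a short fixed-point count (an element $(i,j)\mapsto(\tau_1(j),\tau_2(i))$ has a fixed cell precisely because $\tau_2\tau_1$, of order prime to $3$, fixes some index) shows that $g$ fixes at least one of the nine curves. A check of the intersection with $K_{\XX}$ shows that neither a single row-sum nor a single $(-1)$-curve is proportional to $K_{\XX}$; in the second case the fixed curve even has a trivial, hence disjoint, $\Gamma$-orbit. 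Either way $g$ fixes a class independent of $K_{\XX}$, so $\rho\left(\XX\right)^{\Gamma}\ge 2>1$.

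I expect the main obstacle to be the transposing case, namely ruling out a fixed-point-free automorphism of the nine-curve configuration, and this is exactly where the two structural inputs combine. If the order bound from Remark~\ref{9min} were dropped, the dangerous element would be a product of two $3$-cycles on the nine curves, all of whose orbit-sums collapse to $-K_{\XX}$ (the three orbits are tritangent trios), forcing $\rho\left(\XX\right)^{\Gamma}=1$; but such an element acts with order $3$ on the nine curves, which would require $9\mid n$, and that possibility is precisely the one excluded because it would place $(123)$ inside a cyclic group of order $9$ and hence make it type $I$. So the elementary combinatorics of the $3\times 3$ rook's graph, constrained by the order bound, is what closes the argument.
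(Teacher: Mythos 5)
Your proof is correct, and its skeleton coincides with the paper's: reduce to $(123)$ via Lemma \ref{3nonmin}, work with the nine $(-1)$-curves fixed by $(123)$ (which the generator $g$ permutes because it commutes with $(123)$), and rule out $9 \mid |\Gamma|$ via Remark \ref{9min}. The difference lies entirely in the finishing step. The paper first invokes Lemma \ref{5nonmin} to conclude that $|\Gamma| = 3\cdot 2^k$, so the induced action on the nine curves is by a cyclic $2$-group; since $9$ is odd, an orbit count immediately forces a fixed curve, and that single parity observation ends the proof. You instead analyze the intersection graph of the nine curves (the $3\times 3$ rook's-graph combinatorics, correctly identified: two of the nine curves meet iff they lie in different rows and different columns, and their total sum is $-3K_{\XX}$) and split into the partition-preserving and partition-swapping cases, obtaining either a $\Gamma$-invariant row-sum or a $\Gamma$-invariant curve; either class is visibly not proportional to $K_{\XX}$, so $\rho(\XX)^{\Gamma}\geq 2$. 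Your route is heavier, but it buys something: it never needs the order-$5$ exclusion of Lemma \ref{5nonmin}, since any induced permutation of order prime to $3$ is handled uniformly, so the argument is more self-contained. Two minor points: in the partition-preserving case you could have concluded a fixed curve directly (a fixed row and a fixed column both exist for the same order-prime-to-$3$ reason, and their intersection is a fixed cell), which would make the two cases symmetric and the row-sum discussion unnecessary; and the assertion ``$(123)=g^{n/3}$'' should be weakened to ``$(123)$ generates $\langle g^{n/3}\rangle$'' (it could equal $g^{2n/3}$), though nothing in the argument depends on this.
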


\begin{proof}

Assume that $\rho\left(\XX\right)^{\Gamma} = 1$. Then, by Lemma \ref{5nonmin} and by Remark \ref{9min}, the order of~$\Gamma$ is equal to $3 \cdot 2^k$. The element of type $II$ is conjugate to $(123)$ and has $9$ invariant \mbox{$(-1)$-curves}. Therefore at least one of these curves is $\Gamma$-invariant since the order of $\Gamma$ is equal to $3 \cdot 2^k$, and an element of order $3$ acts trivially on these curves. This contradicts the assumption that $\rho\left(\XX\right)^{\Gamma} = 1$.

\end{proof}

Recall that $c=(14)(25)(36) \in \SG_6$, and $s\in W(E_6)$ is defined by the relations $sE_i = Q_i$, $sQ_i = E_i$, and $sL_{ij} = L_{ij}$. 
Obviously, $cs=sc$, and the group generated by $cs$ is isomorphic to $\Z/2\Z$.

\begin{lemma}
\label{typeIIImin}
Suppose that a cyclic group $\Gamma \subset W(E_6)$ contains an element of type $III$. Then $\rho\left(\XX\right)^{\Gamma} = 1$ if and only if $\Gamma$ is conjugate to the cyclic group generated by $(123)(456)$ and $cs$.
\end{lemma}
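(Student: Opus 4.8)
The plan is to reduce the entire question to the action of a single commuting involution on a three-dimensional subspace of $\Pic(\XX)\otimes\Q$. By Lemma~\ref{3nonmin} I may assume the given element of type $III$ is $\sigma=(123)(456)\in\SG_6$, and since $\Gamma$ is cyclic, $\langle\sigma\rangle$ is its unique subgroup of order $3$. Write $A=E_1+E_2+E_3$ and $B=E_4+E_5+E_6$; then the fixed subspace $V^{\sigma}\subset\Pic(\XX)\otimes\Q$ is spanned by $L$, $A$, $B$. In the basis $(L,A,B)$ the intersection form is $\mathrm{diag}(1,-3,-3)$, the class $-K_{\XX}=3L-A-B$ lies in $V^{\sigma}$ and is fixed by all of $W(E_6)$, and its orthogonal complement $W=\langle -K_{\XX}\rangle^{\perp}\cap V^{\sigma}$ is the negative definite plane spanned by $v_1=A-B$ and $v_2=2L-A-B$, with $v_1^2=-6$, $v_2^2=-2$, $v_1\cdot v_2=0$.

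First I would dispatch the easy direction. The element $cs$ commutes with $\sigma$ (as $c$ commutes with $\sigma$ in $\SG_6$ and $s$ is central in $\SG_6$), so $\langle\sigma,cs\rangle$ is abelian of order $6$, hence cyclic. Using $sE_i=Q_i$ together with $Q_i\sim 2L+E_i-\sum_j E_j$ one computes $s(L)=5L-2A-2B$, $s(A)=6L-2A-3B$, $s(B)=6L-3A-2B$, whence $cs$ fixes $-K_{\XX}$ and sends $v_1\mapsto -v_1$, $v_2\mapsto -v_2$, i.e. acts as $-\mathrm{id}$ on $W$. Thus the $cs$-fixed subspace of $V^{\sigma}$ is exactly $\langle -K_{\XX}\rangle$. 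Since $\Gamma$ is cyclic with $3$-part $\sigma$ and $2$-part $cs$, we have $(\Pic(\XX)\otimes\Q)^{\Gamma}=(V^{\sigma})^{cs}$, giving $\rho(\XX)^{\Gamma}=1$.

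For the converse, suppose $\rho(\XX)^{\Gamma}=1$. By Lemma~\ref{5nonmin} the group $\Gamma$ has no element of order $5$, and by Remark~\ref{9min} its order is not divisible by $9$ (the unique order-$3$ subgroup of $\Z/9\Z$ is of type $I$, whereas here it must be $\langle\sigma\rangle$ of type $III$), so $|\Gamma|=3\cdot 2^{k}$ and its $2$-part $h$ lies in the centralizer $C=C_{W(E_6)}(\sigma)$ with $(\Pic(\XX)\otimes\Q)^{\Gamma}=(V^{\sigma})^{h}$. The crux is to describe how $C$ acts on $V^{\sigma}$. Every element of $C$ fixes $-K_{\XX}$ and preserves $W$, so restriction gives a homomorphism $C\to O(W)$; since $W$ carries the anisotropic form $\mathrm{diag}(-6,-2)$, one checks that $O(W)(\Q)=\{\pm 1\}\times\{\pm 1\}$, generated by the images of $c$ (which is $\mathrm{diag}(-1,1)$) and $s$ (which is $\mathrm{diag}(1,-1)$). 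The kernel of $C\to GL(V^{\sigma})$ is the pointwise stabiliser of $V^{\sigma}$ intersected with $C$, namely $\langle (123),(456)\rangle\cong(\Z/3\Z)^2$. The condition $(V^{\sigma})^{h}=\langle -K_{\XX}\rangle$ forces $h$ to act as $-\mathrm{id}$ on $W$, so $h$ maps to $(-1,-1)\in O(W)$ and therefore lies in the coset $cs\cdot\langle (123),(456)\rangle$.

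It then remains to show that every such $h$ produces a group conjugate to $\langle\sigma,cs\rangle$. Because $h^2$ maps to $(1,1)$ it lies in the kernel $\langle (123),(456)\rangle$, which has odd order, so $h^2=1$ and $h$ is an involution. Computing $(cs\,k)^2$ for $k\in\langle(123),(456)\rangle$, using that $cs$ conjugates $(123)$ to $(456)$, shows that the coset $cs\cdot\langle(123),(456)\rangle$ contains exactly three involutions and that conjugation by $(123)\in C$ permutes them cyclically. Hence $h$ is conjugate to $cs$ by an element of $C$; as this element centralises $\sigma$, it conjugates $\langle\sigma,h\rangle$ to $\langle\sigma,cs\rangle$, completing the proof. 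I expect the main work to be the explicit determination of the centralizer action, that is, computing $s$ on $L$, $A$, $B$, identifying $O(W)(\Q)$, and pinning down the kernel $\langle(123),(456)\rangle$; once the homomorphism $C\to O(W)$ with this kernel is established, both directions reduce to short coset and fixed-space computations.
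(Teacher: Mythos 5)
Your strategy is viable and most of your computations are correct (the action of $cs$ on $V^{\sigma}$, the identification of the kernel with $\langle(123),(456)\rangle$, the count of three involutions in the coset and their conjugacy under $(123)$), but the converse direction rests on the assertion that $O(W)(\Q)=\{\pm 1\}\times\{\pm 1\}$, and this assertion is false. Anisotropy over $\Q$ does not make the rational orthogonal group of a binary form finite: for your form $\mathrm{diag}(-6,-2)=-2\cdot\mathrm{diag}(3,1)$, every matrix $\left(\begin{smallmatrix} a & -b\\ 3b & a \end{smallmatrix}\right)$ with $a^{2}+3b^{2}=1$ is an isometry, and this conic has infinitely many rational points, e.g. $(a,b)=(\tfrac12,\tfrac12)$ (an isometry of order $6$) and $(a,b)=(-\tfrac{11}{13},\tfrac{4}{13})$ (an isometry of infinite order); indeed $SO(W)(\Q)$ is the norm-one torus of $\Q(\sqrt{-3})$. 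What actually constrains the image of $C=C_{W(E_6)}(\sigma)$ is not rationality but integrality: every element of $C$ preserves the rank-two lattice $\Pic(\XX)^{\sigma}\cap K_{\XX}^{\perp}$, whose Gram matrix in the basis $L-A$, $A-B$ is $\left(\begin{smallmatrix} -2 & 3\\ 3 & -6\end{smallmatrix}\right)$, a rescaled hexagonal $A_2$-lattice; its automorphism group is dihedral of order $12$, not of order $4$. Moreover the image of $C$ really is larger than $\langle \bar c,\bar s\rangle$: the reflection $w\in W(E_6)$ in the root $L-E_1-E_2-E_3$ (the quadratic Cremona involution $E_i\mapsto L_{jk}$, $L\mapsto 2L-E_1-E_2-E_3$, with $E_4,E_5,E_6$ fixed) commutes with $\sigma$ and restricts to $W$ as $v_1\mapsto -\tfrac12 v_1+\tfrac32 v_2$, $v_2\mapsto \tfrac12 v_1+\tfrac12 v_2$, which is not diagonal; composing it with $\bar s$ gives a rotation of order $3$, so the image of $C$ in $O(W)$ is the full dihedral group of order $12$ (consistent with $|C|=108$ and your kernel of order $9$). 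Consequently your key deduction --- ``$W^{h}=0$ forces $h|_{W}=-\mathrm{id}$, hence $h\in cs\cdot\langle(123),(456)\rangle$'' --- is unsupported as written.

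The deduction itself is true, but it needs an argument using the $2$-power order of $h$ together with integrality, which is exactly what the false claim was hiding. For instance: $\bar h=h|_{W}$ is a $2$-element of $O(W)$ defined over $\Q$, so its trace is rational; if $\det\bar h=-1$ it is a reflection and fixes a nonzero rational vector, contradicting $W^{h}=0$; if $\det\bar h=1$ and $\bar h\neq\pm\mathrm{id}$, then $\bar h$ is a rotation whose eigenvalues are primitive $2^{j}$-th roots of unity with $j\geq 2$, and rationality of $2\cos(2\pi k/2^{j})$ forces order $4$ and trace $0$, which for an isometry of $\mathrm{diag}(3,1)$ means $a=0$ and $3b^{2}=1$, impossible over $\Q$. (Equivalently: the hexagonal lattice has no automorphism of order $4$.) With this repair your proof is complete, and it is then an essentially linearized version of the paper's own argument: the paper works instead with the triangular prism graph on the six $\sigma$-orbits of pairwise disjoint $(-1)$-curves, whose automorphism group $\SG_3\times\Z/2\Z$ is precisely the order-$12$ dihedral group above, shows that the only fixed-point-free involution of the prism is the one induced by $cs$, and concludes by the same uniqueness-of-lift observation that you make, namely that the kernel $\langle(123),(456)\rangle$ has odd order.
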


\begin{proof}
By Lemma~\ref{3nonmin}, any element of type $III$ is conjugate to $(123)(456)$. Thus we may assume that $\Gamma$ contains the element $(123)(456)$. 
The group generated by $(123)(456)$ has $6$ orbits consisting of disjoint curves:
\begin{gather}
\label{tp1}
\{E_1, E_2, E_3\}; \qquad \{Q_1, Q_2, Q_3\}; \qquad \{L_{12}, L_{13}, L_{23}\};\\
\label{tp2}
\{Q_4, Q_5, Q_6\}; \qquad \{E_4, E_5, E_6\}; \qquad \{L_{45}, L_{46}, L_{56}\}.
\end{gather}
Take a graph $\tp$ which vertices correspond to these orbits, and two vertices are connected by an edge, if the six curves in the corresponding two orbits are not disjoint. This graph is a triangular prism, and its automorphism group is $\Aut(\tp)\cong\SG_3 \times \Z/2\Z$. Let $G$ be the image of $\Gamma$ in this automorphism group. Note that $(123)(456)$ acts on this graph trivially. 

If $\Gamma$ is conjugate to the cyclic group generated by $(123)(456)$ and $cs$, then $G$ permutes the top~(\ref{tp1}) and the bottom~(\ref{tp2}) of the prism $\tp$. It is clear now that for any line 
$L$ on a cubic there exists $g\in\Gamma$ such that $L\cdot gL=1$. It follows that $X$ is minimal, \mbox{and $\rho\left(\XX\right)^{\Gamma}=1$}.

Assume that $\rho\left(\XX\right)^{\Gamma} = 1$.
 From Lemma \ref{5nonmin} and Remark \ref{9min}, it follows that the order of~$\Gamma$ is equal to $3 \cdot 2^k$; thus $G$ is either trivial or isomorphic to $\Z/2\Z$, since there are no other cyclic subgroups in $\SG_3 \times \Z / 2\Z$ of order~$2^k$. But if $G$ is trivial, then $\rho\left(\XX\right)^{\Gamma} > 1$. Therefore, $G \cong \Z / 2\Z$, and $\ord \Gamma = 6$.

The image $\widetilde{cs}$ of the element $cs$ in $\SG_3 \times \Z/2\Z$ enjoys the following property: for each vertice~$a$ of the graph $\tp$ the vertice $\widetilde{cs}(a)$ is connected with $a$ by an edge, and $\widetilde{cs}$ is the only element of order $2$ in $\SG_3 \times \Z/2\Z$ satisfying this property. 
Any other element $g$ of order $2$ in $\SG_3 \times \Z/2\Z$ either has a vertice fixed by $g$, or has a $g$-invariant pair of vertices, which are not connected by an edge. 
In both cases $\rho\left(\XX\right)^{\Gamma} > 1$, since we can $\Gamma$-invariantly contract either $3$ or $6$ lines.

We claim that there is the unique lift of any element of order $2$ in $\SG_3 \times \Z/2\Z$ to an element of order $2$ in $W(E_6)$. Indeed, let $Z$ be the centralizer of $(123)(456)$. 
Then the kernel of the natural projection from $Z$ to $\Aut(\tp)$ does not contain elements of order~$2$. 
Therefore $\rho\left(\XX\right)^{\Gamma} = 1$ if and only if $\Gamma$ is conjugate to the group generated by $(123)(456)$ and~$cs$.
\end{proof}

\begin{remark}\label{typeIIImin_rem}
It follows that an element of type $III$ cannot commute with an element of order $4$. Indeed, if there exist an element $g$ of order $4$ which commute with an element of type $III$, then $g$ acts on the triangular prism $\tp$ from the proof of the previous lemma. But there are no elements of order $4$ in the group $\Aut(\tp)\cong\SG_3 \times \Z/2\Z$.
\end{remark}

\begin{proof}[Proof of Proposition~\ref{minclass}]  By Lemma~\ref{3min} and Lemma~\ref{typeIIImin} the conditions are sufficient. We prove necessity. 
 By Lemma~\ref{typeIInonmin} and Lemma~\ref{typeIIImin} we may assume that $\Gamma$ contains an element of type $I$.
By Lemma~\ref{3min}, one has $\rho\left(\XX\right)^{\Gamma}= 1$. Now we prove that the order $n$ of~$\Gamma$ is $3$, $6$, $9$, or $12$, and if $\Gamma_1$ and $\Gamma_2$ are cyclic of the same order and contain an element of type $I$, then $\Gamma_1$ and $\Gamma_2$ are conjugate.

The last assertion is clear for $n=3$ and, by Remark \ref{9min}, for $n=9$.
By Lemma \ref{9max}, either $n=9$, or $n=3 \cdot 2^k$.
Assume that $n=3 \cdot 2^k$, and $k>0$.

Let $g\in W(E_6)$ be an element such that orbits of $(-1)$-curves for the group generated by $g$ are the following:
$$
E_6 \rightarrow L_{56} \rightarrow Q_5 \rightarrow E_6;
$$
$$
E_1 \rightarrow Q_2 \rightarrow E_5 \rightarrow L_{45} \rightarrow L_{16} \rightarrow L_{24} \rightarrow L_{15} \rightarrow L_{26} \rightarrow Q_6 \rightarrow E_4 \rightarrow Q_1 \rightarrow L_{13} \rightarrow E_1;
$$
$$
E_2 \rightarrow L_{46} \rightarrow L_{14} \rightarrow E_3 \rightarrow Q_3 \rightarrow L_{12} \rightarrow L_{25} \rightarrow Q_4 \rightarrow L_{23} \rightarrow L_{35} \rightarrow L_{36} \rightarrow L_{34} \rightarrow E_2.
$$

The element $g$ generates a subgroup of order $12$ in $W(E_6)$ containing an element of type $I$. Therefore the order of $\Gamma$ can be $3$, $6$ and $12$.

It is straightforward to check that 
$$
E_1 \cdot gE_1 = E_2 \cdot g^5E_2 = 1;
$$
$$
E_1 \cdot g^5E_1 = E_2 \cdot g E_2 = 0.
$$
\noindent Therefore two orbits of length $12$ cannot be permuted by any element $h$ commuting with $g$ since the element $hgh^{-1}$ does not preserve the intersection form. Thus the order of $\Gamma$ can not be $24$.

Let $g \in \Gamma$ be an element of order $3$ and $h_2 \in \Gamma$ be an element of order $2$. The group $\Gamma$ is cyclic, thus $g$ and $h_2$ commute. There are $27$ lines on $X$, therefore the element $h_2$ has an invariant line $R$. Moreover, the lines $gR$ and $g^2R$ are $h_2$-invariant since $g$ and $h_2$ commute. Let $D$ be any other line. Then $D$ meets exactly one line among $R$, $gR$, and $g^2R$ since $R + gR + g^2R \sim -K_X$. Assume that $D \cdot R = 1$. Then $h_2D \cdot R = h_2D \cdot h_2R = D \cdot R = 1$, and $D + h_2D + R \sim -K_X$. 

Let $S$ be any line which differs from $R$, $gR$, $g^2R$, $D$, $gD$, $g^2D$, $h_2D$, $gh_2D$ and $g^2h_2D$. Then $S$ meets exactly one of the lines $D$, $gD$, $g^2D$ since $D + gD + g^2D \sim -K_X$. We may assume that $S \cdot D = 1$. Note that $D + h_2D + R \sim -K_X$ therefore $S \cdot h_2D = 0$. Thus $h_2S \ne S$ since $h_2S \cdot h_2D = S \cdot D = 1$. So the element $h_2$ nontrivially acts on all lines except $R$, $gR$, $g^2R$, and this action maps any line $D$ meeting $R$ (resp. $gR$, $g^2R$) to $-K_X - D - R$ (resp. $-K_X - D - gR$, $-K_X - D - g^2R$). Therefore up to conjugation there is only one group of order $6$ containing an element of type $I$.

Assume that there is an element $h$ of order $4$ in $\Gamma$. Let $D$ be a line such that $D \cdot R = 1$. Then $hD \cdot R = hD \cdot hR = D \cdot R = 1$. There are ten lines meeting $R$ on $X$. Thus we have six possibilities of $hD$ since the other four lines are $gR$, $g^2R$, $D$ and $h^2D$. But these six possibilities give one conjugacy class of cyclic subgroups in $W(E_6)$, since we can change $h$ by $h^3$, and take a conjugation of $h$ by an element of a group of order $9$ containing $g$. Therefore up to conjugation there is only one group of order $12$ containing an element of type $I$.

\end{proof}

\section{Cyclic cubic surfaces}

This section is devoted to a special class of cubic surfaces known as \textit{cyclic cubic surfaces}. Let $\ka$ be a field, and let $X$ be a smooth cubic surface in $\Pro^3_{\ka}$. As before, we denote by~$\Gamma$ the image of the Galois group $\Gal\left(\overline{\ka} / \ka\right)$ in the Weyl group $W(E_6)$. We assume that $\Char \ka \ne 2$, and $\Char \ka \ne 3$.

\begin{definition}
\label{cycliccubic} Let $f\in\ka[x,y,z]$ be a homogeneous polynomial of degree $3$.
The surface $X$ in $\Pro^3_{\ka}$ given by the equation
$$
f(x, y, z) + t^3 = 0
$$
\noindent is called \textit{a cyclic cubic surface}.
\end{definition}

The projection $\pi:(x : y : z : t) \mapsto (x : y : z)$ induces a finite morphism \mbox{$X\to\Pro^2_{\ka}$} of degree $3$ branched in the elliptic curve $E$ given by the equation $f(x, y, z) = 0$. 
Denote by $F\subset X$ the elliptic curve given by the equation $t=0$. Clearly, $\pi$ induces an isomorphism~$F\cong E$.

\begin{lemma}
If $\omega\in\ka$, then the transformation $g: (x : y : z : t) \mapsto (x : y : z : \omega t)$ induces an element of type $I$ in $W(E_6)$
(see  Remark \ref{3notation}).
\end{lemma}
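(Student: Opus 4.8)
The plan is to pin down the conjugacy class of the element $\gamma\in W(E_6)$ induced by $g$ by computing the rank of its invariant sublattice in $\Pic(\XX)$, and to separate the three conjugacy classes of order-$3$ elements (Lemma~\ref{3class}) by this rank. By Lemma~\ref{3nonmin} a type~$II$ element is conjugate to $(123)$ and a type~$III$ element to $(123)(456)$ in $\SG_6\subset W(E_6)$; on the basis $L, E_1,\dots,E_6$ their invariant sublattices are spanned by $L, E_4, E_5, E_6, E_1+E_2+E_3$ and by $L, E_1+E_2+E_3, E_4+E_5+E_6$, so they have ranks $5$ and $3$ respectively, whereas a type~$I$ element has invariant rank $1$ by Lemma~\ref{3min}. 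Since the invariant rank is a conjugacy invariant taking the three distinct values $1,5,3$, it suffices to show that $\rho(\XX)^{\langle g\rangle}=1$.

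First I would determine the fixed locus of $g$ on $\XX = X\otimes\kka$. A point $(x:y:z:t)$ is fixed precisely when $(x:y:z:\omega t)=(x:y:z:t)$ in $\Pro^3$; as $\omega\neq 1$ this forces $t=0$ or $x=y=z=0$, and the point $(0:0:0:1)$ does not lie on $X$. Hence $\mathrm{Fix}(g)$ is the hyperplane section $F=\{t=0\}$. Since $X$ is smooth and the $t$-derivative of $f+t^3$ vanishes identically on $\{t=0\}$, some partial derivative of $f$ must be nonzero along $F$, so $F\cong E$ is a smooth curve, and as a hyperplane section of the cubic it satisfies $F\sim -K_{\XX}$.

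Next I would run the quotient computation exactly as in the proof of Lemma~\ref{3min}. The group $G=\langle g\rangle\cong\Z/3\Z$ acts with smooth fixed divisor $F$, so the quotient $\phi\colon\XX\to Y:=\XX/G$ is smooth, and is a triple cyclic cover ramified only along $F$, with ramification index $3$. The ramification formula gives $K_{\XX}\sim\phi^*K_Y+2F$, and substituting $F\sim -K_{\XX}$ yields $\phi^*K_Y\sim 3K_{\XX}$; taking self-intersections and using $\deg\phi=3$ gives $3K_Y^2=(3K_{\XX})^2=9\cdot 3=27$, so $K_Y^2=9$. As in Lemma~\ref{3min} this forces $Y\cong\Pro^2$, and since $\Char\ka\neq 3$ the map $\phi^*$ identifies $\Pic(Y)\otimes\Q$ with $(\Pic(\XX)\otimes\Q)^G$; therefore $\rho(\XX)^{\langle g\rangle}=\rho(\Pro^2)=1$. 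In particular $g$ acts nontrivially on $\Pic(\XX)$, so $\gamma$ has order exactly $3$, and by the rank comparison above it is of type~$I$.

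The step I expect to be the only delicate one is the identification $Y\cong\Pro^2$ from $K_Y^2=9$: this requires knowing that $Y$ is a smooth rational surface and that $\Pro^2$ is the unique minimal rational surface with $K^2=9$. However this is precisely what the proof of Lemma~\ref{3min} already carries out, and the argument transfers verbatim because it uses only that $\mathrm{Fix}(g)=F$ is a smooth anticanonical curve. As a consistency check, note that on the Fermat cubic $g$ is conjugate, via the coordinate swap $x\leftrightarrow t$, to the type~$I$ automorphism $(x:y:z:t)\mapsto(\omega x:y:z:t)$, in agreement with the general statement.
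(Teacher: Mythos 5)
Your proof is correct and takes essentially the same route as the paper: both arguments establish $\rho(\XX)^{\langle g\rangle}=1$ by identifying the quotient $\XX/\langle g\rangle$ with $\Pro^2$ and then eliminate types $II$ and $III$ to conclude type $I$. The only differences are elaborations --- the paper gets the quotient for free (the projection $\pi:(x:y:z:t)\mapsto(x:y:z)$ is itself the quotient map of the cyclic cover) and cites Lemma~\ref{3nonmin} for the elimination, whereas you re-run the Hurwitz computation from Lemma~\ref{3min} and re-derive the invariant ranks $5$ and $3$ of the representatives $(123)$ and $(123)(456)$ explicitly.
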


\begin{proof}

The transformation $g$ generates a group $G$ of order $3$. Clearly, $X / G \cong \Pro^2_{\ka}$. Therefore $\rho(X)^G = \rho(\Pro^2_{\ka}) = 1$. By Lemma \ref{3nonmin}, the type of $g$ is not $II$ or $III$. Thus the type of $g$ is $I$.

\end{proof}

\begin{definition}
A point on a cubic surface $X$ is called \textit{an Eckardt point} if there exist three lines on $X$ passing through this point.
\end{definition}

\begin{lemma}
\label{9Eckardt}
Exactly $9$ Eckardt points of $X$ lie on $F$.
\end{lemma}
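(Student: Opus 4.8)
The plan is to set up a bijection between lines on $X$ and flex (inflection) points of the branch curve $E$ by means of the projection $\pi$, and then to count. Throughout I work over $\kka$, where $X$ carries exactly $27$ lines and $E$ is a smooth plane cubic with exactly $9$ flexes (here the hypothesis $\Char\ka\neq 3$ is used, so that the Hessian of $f$ meets $E$ transversally in $9$ distinct points).

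First I would observe that $(0:0:0:1)\notin X$, since $f(0,0,0)+1=1\neq 0$. Hence $\pi$ is the restriction of the linear projection from a point off $X$, so it is a morphism carrying each line $m\subset X$ isomorphically onto a line $\ell=\pi(m)\subset\Pro^2_{\kka}$; in particular $m$ is a component of the plane cubic curve $\pi^{-1}(\ell)$, which in suitable coordinates $(u:v:t)$ on $\ell$ together with $t$ is $C_\ell=\{f|_\ell(u,v)+t^3=0\}$. I would also record that a line is a $(-1)$-curve, so $m\cdot F=m\cdot(-K_{\XX})=1$: every line meets $F$ in exactly one point.

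The heart of the argument is to decide when $C_\ell$ contains a line. Any linear component of $C_\ell$ must project isomorphically onto $\ell$ (no fibre of $\pi$ is contained in $X$), so on it $t$ is a linear form in $(u:v)$; substituting forces $f|_\ell=-(\text{linear})^3$, a perfect cube. Thus $C_\ell$ has a line component if and only if $f|_\ell$ is a perfect cube, which happens precisely when $\ell$ is the tangent line to $E$ at a flex. In that case $f|_\ell=L^3$ and
$$
C_\ell=L^3+t^3=(t+L)(t+\omega L)(t+\omega^2 L)
$$
splits into three lines, all passing through the point $\{L=0,\ t=0\}$, which is the flex point lifted to $F$. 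So the lines of $X$ arise exactly from the $9$ flex-tangents of $E$, each contributing a triple of concurrent lines meeting in a single point of $F$; since every line meets $F$ only in this flex point, these $9$ triples partition all $27$ lines.

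Finally I would assemble the count: the $9$ flex points lifted to $F$ are distinct, each carries exactly three lines, hence is an Eckardt point, and these are the only points of $F$ lying on any line. Therefore $F$ contains exactly $9$ Eckardt points and no others. I expect the main obstacle to be the reducibility analysis of $C_\ell$ — namely confirming that a simple tangent ($f|_\ell=L^2M$, $L\neq M$) yields an irreducible nodal cubic with no line component, while only a flex-tangent splits off lines — together with verifying that the $9$ flexes are genuinely distinct when $\Char\ka\neq 2,3$, so that the triples neither merge nor produce extra coincidences on $F$.
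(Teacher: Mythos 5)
Your proof is correct, and it establishes the same underlying correspondence as the paper: via the degree-$3$ projection $\pi$, the Eckardt points of $X$ lying on $F$ are exactly the lifts of the $9$ inflection points of $E$. The execution, however, is genuinely different. The paper argues pointwise and in one direction only: if $P\in F$ is an Eckardt point, the tangent plane $T_P$ (which automatically contains the centre of projection, since $\partial(t^3)/\partial t$ vanishes along $t=0$) projects to a line meeting $E$ at $\pi(P)$ with multiplicity $3$, so $\pi(P)$ is a flex; the converse, that every flex lifts to an Eckardt point, is asserted without proof. You instead classify all lines of $X$ at once: any line is a component of the plane cubic $C_\ell=\{f|_\ell+t^3=0\}$ over $\ell=\pi(\text{line})$, such a component exists if and only if $f|_\ell$ is a perfect cube, i.e.\ if and only if $\ell$ is a flex tangent, and in that case $C_\ell=(t+L)(t+\omega L)(t+\omega^2 L)$ is a triple of lines concurrent at the lifted flex. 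This buys you both directions of the correspondence (in particular the converse the paper leaves implicit), shows that no point of $F$ other than the $9$ lifted flexes lies on any line at all, and as a by-product recovers the configuration of the $27$ lines as $9$ concurrent triples; the cost is the slightly longer reducibility analysis of $C_\ell$, which you carry out correctly (a line component forces $t$ to restrict to a linear form, hence $f|_\ell$ to be a cube, so the simple-tangent case is excluded automatically). Both arguments use $\Char\ka\neq 3$ at the same spot, namely to guarantee that $E$ has exactly $9$ distinct flexes.
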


\begin{proof}
Let $\XX\cong X\otimes_\ka\kka$ be a cyclic cubic surface over $\kka$. Denote by $\BF\subset\XX$ the elliptic curve corresponding to $F$.
Assume that $P \in \BF$ is an Eckardt point on $\XX$, and let $T_P$ be a tangent plane at the point $P$. Then $T_P$ contains three lines on $\XX$ and $\pi(T_P)$ is a line on~$\Pro^2_{\kka}$. This line intersects the curve $\EE = E\otimes_{\ka}\kka$ at a point $\pi(P)$ with multiplicity $3$. Thus $\pi(P)$ is an inflection point of $\EE$. Conversely, if $\pi(P)$ is an inflection point of $\EE$, then $P$ is an Eckardt point 
on $\XX$. Any plane elliptic curve has exactly $9$ inflection points. The lemma follows.
\end{proof}

Note that the line passing through any two inflection points on $E$ meets this curve in a third point, which is an inflection point.

We use the following lemma in Section 5.4 to construct a minimal cubic surface of type~$(c_{14})$.

\begin{lemma}
\label{inf9}
Assume that the action of $\Gal\left( \F_{q^3} / \F_q \right)$ on the set of inflection points on $E$ has three orbits of length $3$. If there exists an orbit consisting of three inflection points  not lying on a line, then $X$ is minimal, and its type is $(c_{14})$.
\end{lemma}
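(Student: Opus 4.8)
The plan is to identify the image $\Gamma=\langle\Fr\rangle\subset W(E_6)$ of Frobenius and to show it has order $9$; everything else follows from the classification. By Lemma~\ref{9Eckardt} the nine Eckardt points on $F$ are the images of the nine inflection points of $E$, and (by the remark following that lemma) these form a torsor under the $3$-torsion group $E[3]$, three inflection points being collinear exactly when they sum to zero. As in the proof of Lemma~\ref{9Eckardt}, the three lines through an Eckardt point are the components of $\pi^{-1}(L)\cap X$ for the inflectional tangent $L$, and each of the $27$ lines meets $F$ in a single such point; thus the $27$ lines split into nine triples indexed by the inflection points, and $g\colon(x:y:z:t)\mapsto(x:y:z:\omega t)$ cyclically permutes each triple. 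The action of $\Fr$ on the inflection points is an affine map $T\mapsto T_0+\Fr|_{E[3]}(T)$; the hypothesis of three orbits of length $3$ says exactly that this map has order $3$ and no fixed point, so either $\Fr|_{E[3]}=\mathrm{id}$ (translation by some $T_0\neq0$) or $\Fr|_{E[3]}$ is unipotent of order $3$. In both cases $\det\Fr|_{E[3]}=1$, so by the Weil pairing the mod-$3$ cyclotomic character is trivial, $q\equiv1\pmod3$, and $\omega\in\F_q$; hence $g$ is defined over $\F_q$, commutes with $\Fr$, and (by the lemma asserting this) induces an element of type $I$. Summing an orbit $\{T,\Fr T,\Fr^2T\}$ gives $(\Fr|_{E[3]}-\mathrm{id})T_0$, which is $0$ in the translation case and a nonzero constant in the unipotent case; therefore the hypothesis that \emph{some} orbit is non-collinear forces the unipotent case (in which, in fact, all three orbits are non-collinear).

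Next I would bound $|\Gamma|$. Since every Frobenius orbit of inflection points has length $3$, the element $\Fr^3$ fixes each Eckardt point and permutes each triple of lines; as $\Fr^3$ commutes with $g$ and $g$ acts on each triple as a $3$-cycle, $\Fr^3$ acts on each triple through $\langle g\rangle$, whence $\Fr^9=1$. Combined with the $3$-cycle action on inflection points this gives $|\Gamma|\in\{3,9\}$. If $|\Gamma|=9$, then by Remark~\ref{9min} $\Gamma$ is the unique cyclic subgroup of order $9$, which contains an element of type $I$, so by Proposition~\ref{minclass} $X$ is minimal of type $(c_{14})$. Thus the whole statement reduces to showing $\Fr^3\neq1$, equivalently that the $27$ lines are \emph{not} all defined over $\F_{q^3}$.

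The remaining point, connecting non-collinearity to this non-rationality, is the \emph{main obstacle}. Over the inflectional tangent $L$ at an inflection point $P$ one has $f|_L=u\,(s-s_P)^3$ in a parameter $s$, so the three lines through the corresponding Eckardt point are $t=\zeta\sqrt[3]{-u}\,(s-s_P)$ with $\zeta\in\mu_3$; as $\mu_3\subset\F_q$, they are defined over $\F_{q^3}$ exactly when $-u$ is a cube in $\F_{q^3}^{*}$, and $\Fr^3=1$ precisely when this holds at every inflection point. I would carry out the computation in the Hesse normal form $x^3+y^3+z^3-3\lambda xyz=0$: the leading coefficient at $(0:1:-1)$ equals $(\lambda^3-1)/\lambda^3$, and computing the analogous coefficients at the remaining inflection points yields the class of each triple in $\F_{q^3}^{*}/(\F_{q^3}^{*})^3\cong\mu_3$. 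The identity I expect to prove — and which is the genuinely hard step — is that the product of these classes along a Frobenius orbit is trivial exactly when the orbit is collinear, i.e.\ that it is the Weil-pairing obstruction distinguishing the translation case from the unipotent case. Granting this, the non-collinear hypothesis produces a triple defined over a proper cubic extension of $\F_{q^3}$, so $\Fr^3\neq1$, $|\Gamma|=9$, and by Remark~\ref{9min} and Proposition~\ref{minclass} the surface $X$ is minimal of type $(c_{14})$. The rest of the argument is bookkeeping with the orbit structure of the $27$ lines.
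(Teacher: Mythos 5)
Your reduction is correct, and in places it records more than the paper's own proof does: the torsor analysis showing that the hypothesis forces $q\equiv 1\pmod 3$ (hence $\omega\in\F_q$, so the cyclic automorphism $g$ is defined over $\F_q$ and commutes with $\Fr$), the dichotomy ``translation versus unipotent'' with all three orbits non-collinear in the unipotent case, the bound $\Fr^9=1$ via the centralizer of a $3$-cycle in $\SG_3$, and the conclusion $|\Gamma|\in\{3,9\}$ are all sound. This correctly reduces Lemma~\ref{inf9} to the single claim that $\Fr^3\neq 1$ on $\Pic(\XX)$, i.e.\ that the $27$ lines are not all defined over $\F_{q^3}$. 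But that claim is where the entire content of the lemma sits, and you do not prove it: you explicitly defer it (``Granting this\dots'') to a conjectured cube-class identity.

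Moreover, that identity is false, for a reason visible inside your own setup. Since $q\equiv 1\pmod 3$, Frobenius acts trivially on $\F_{q^3}^{*}/(\F_{q^3}^{*})^{3}$: for $u\in\F_{q^3}^{*}$ one has $\Fr(u)/u=u^{q-1}=\left(u^{(q-1)/3}\right)^{3}$, a cube. The coefficients attached to the three Eckardt points of one Frobenius orbit are Galois conjugates of one another, hence have equal cube classes, so their product is the cube of a single class, i.e.\ trivial --- always, whether or not the orbit is collinear. So this product can never serve as ``the obstruction distinguishing the translation case from the unipotent case''; what you actually need is nontriviality of the common \emph{individual} class for a non-collinear orbit, which is exactly the crux you postponed (and taking norms down to $\F_q^{*}/(\F_q^{*})^{3}$ does not help, since the norm class is trivial if and only if the individual class is, making the claim circular). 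The paper proves this missing step by pure intersection theory, with no cube classes: assuming $\Gamma$ has no element of order $9$, take $g\in\Gamma$ of order $3$ and the three lines $A$, $B$, $C$ through an Eckardt point $P$ of a non-collinear orbit; since $A+B+C\sim -K_{\XX}$ and $g$ preserves intersection numbers, exactly one of $A\cdot gA$, $A\cdot gB$, $A\cdot gC$ equals $1$, and in the first case one gets $A+gA+g^2A\sim -K_{\XX}$, while in the second (the third being symmetric) one gets $A+gB+g^2C\sim -K_{\XX}$; either way the triple spans a hyperplane whose intersection with the plane $t=0$ is a line through $P$, $gP$, $g^2P$, contradicting non-collinearity. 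Replacing your cube-class computation by this argument (or an equivalent one) would complete the proof, after which your final step via Remark~\ref{9min} and Proposition~\ref{minclass} goes through unchanged.
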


\begin{proof}

Assume that $\Gamma$ does not contain elements of order $9$. By assumption, there exists $g\in\Gamma$ of order $3$. 
Denote by $A$, $B$ and $C$ the three $(-1)$-curves passing through an Eckardt point $P$. One has
$$
A + B + C \sim gA + gB + gC \sim g^2A + g^2B + g^2C \sim -K_\XX.
$$
\noindent 
Since $A\neq gA$, exactly one of the intersection numbers  $gA\cdot A$, $gB\cdot A$, and $gC\cdot A$ is equal to $1$, and other are zero.
The same is true for the lines $g^2A$, $g^2B$, and $g^2C$. 
If $A \cdot gA = 1$, then $gA \cdot g^2 A = g^2A \cdot A =1$; thus $A + gA + g^2A \sim -K_X$ is a hyperplane section. 
Thus the points $P$, $gP$, and $g^2P$ lie on a line, which is the intersection of this hyperplane and of the hyperplane given by the equation $t=0$. 
This contradiction proves that $A \cdot gA = 0$, and, similarly, $B \cdot gB= 0$ and $C \cdot gC = 0$.

Assume that $A \cdot gB = 1$. In this case, $C \cdot gB = 0$, and, since $C \cdot gC = 0$, we have $C \cdot gA = 1$. Likewise $B \cdot gC = 1$.
Thus $g^2C \cdot gB = g^2C \cdot A = 1$; in other words, $A + gB + g^2C \sim -K_X$ is a hyperplane section. Therefore the points $P$, $gP$ and $g^2P$ lie on a line. A contradiction.

Thus $\Gamma$ contains an element of order $9$, and, by Lemma \ref{9max}, it is generated by this element, and, by Remark \ref{9min}, contains an element of type~$I$. 
By Lemma~\ref{3min}, $X$ is minimal.

\end{proof}

\section{Cubic surfaces with an Eckardt point}
\subsection{}
A smooth cubic surface $X$ over $\F_q$ with an Eckardt point defined over $\F_q$ has an involution.
In this section we compute the action of the Galois group on the twist of $X$ by this involution.
Later, in Sections $5$ and $6$, we use these results to construct cubic surfaces satisfying conditions of the cases $(c_{11})$, $(c_{12})$, $(c_{13})$ and $(c_{10})$.

\begin{proposition}[{cf. \cite[Proposition~9.1.23]{Dol}}]
\label{EckInv}
Let $X$ be a cubic surface over arbitrary field $\ka$ and $g$ be an involution of $X$ which fixes a hyperplane section and an isolated point. Then the isolated fixed point of $g$ is an Eckardt point of $X$ defined over $\ka$, and such involutions bijectively correspond to Eckardt points of $X$ defined over $\ka$. 
\end{proposition}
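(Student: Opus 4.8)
The plan is to linearize $g$ and reduce the defining equation to a normal form. Since $X\subset\Pro^3_{\ka}$ is anticanonically embedded, every automorphism of $X$ is induced by a projective transformation, so $g$ lifts to a projective involution of $\Pro^3_{\ka}$ defined over $\ka$. Working over $\kka$, and using that $g$ is diagonalizable with eigenvalues $\pm1$ (so I take $\Char\ka\neq2$), the fixed locus of $g$ in $\Pro^3$ is the union of its two eigen-subspaces. The hypothesis that $g$ fixes a hyperplane section together with one isolated point forces these eigen-subspaces to be a hyperplane $H$ (eigenvalue $+1$) and a point $P\notin H$ (eigenvalue $-1$); the alternative, two fixed lines, would make the fixed locus on $X$ zero-dimensional. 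Choosing coordinates with $H=\{t=0\}$ and $P=(0:0:0:1)$, the map $g$ becomes $(x:y:z:t)\mapsto(x:y:z:-t)$. Invariance of the cubic form $F$ then shows $F$ is even in $t$: the odd-in-$t$ alternative forces $t\mid F$ and makes $X$ reducible, so $F=t^2 L(x,y,z)+C(x,y,z)$ with $L$ linear and $C$ cubic. A direct inspection gives $P\in X$, that the tangent plane at $P$ is $\{L=0\}$, and that $T_P\cap X$ is cut out by a binary cubic, hence is a union of three lines through $P$; thus $P$ is an Eckardt point. It is defined over $\ka$ because it is the Galois-stable $(-1)$-eigenpoint of the $\ka$-rational involution $g$.

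For the converse I would start from an Eckardt point $P$ defined over $\ka$, place it at $(0:0:0:1)$ with tangent plane $\{z=0\}$. Writing $F=t^2 z+t\,z M+C$ and imposing the Eckardt condition shows $F=z(t^2+tM)+C$ for a linear form $M$ and a cubic $C$ with $C(x,y,0)$ a binary cubic. Completing the square in $t$, i.e.\ replacing $t$ by $t+M/2$ (legitimate since $\Char\ka\neq2$), removes the cross term and brings $F$ to the even form $z\tilde t^{\,2}+C'(x,y,z)$. In the new coordinate the harmonic homology $(x:y:z:\tilde t)\mapsto(x:y:z:-\tilde t)$ preserves $X$, fixes the hyperplane section $\{\tilde t=0\}\cap X$ pointwise, and has $P$ as its unique isolated fixed point. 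This attaches to each Eckardt point an involution of the required kind, defined over $\ka$.

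The crux is bijectivity: I must show the involution attached to $P$ is unique, equivalently that the axis hyperplane $H\not\ni P$ for which the harmonic homology with center $P$ and axis $H$ preserves $X$ is uniquely determined. I would parametrize such homologies by $H=\{t+\ell_0=0\}$ with $\ell_0$ a linear form in $x,y,z$, so that the homology is $(x:y:z:t)\mapsto(x:y:z:-t-2\ell_0)$, and impose $g^{*}F=F$ (the scalar must be $+1$, as $-1$ again forces reducibility). Comparing the coefficients of $t$ yields $\ell_0=M/2$ as the only solution, so $H$ is forced. Combined with the forward construction, this shows that the assignments \emph{involution} $\mapsto$ \emph{isolated fixed point} and \emph{Eckardt point} $\mapsto$ \emph{harmonic homology} are mutually inverse, giving the claimed bijection. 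The only real obstacle is the bookkeeping in this coefficient comparison; the geometric content is simply that an Eckardt point determines a unique reflection of $\Pro^3$ stabilizing the surface.
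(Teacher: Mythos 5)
The paper does not actually prove this proposition: it is imported (with a ``cf.'') from Dolgachev \cite[Proposition~9.1.23]{Dol}, so there is no internal argument to compare yours against. Your proof is essentially the standard argument behind that reference: realize the involution as a harmonic homology in the normal form $(x:y:z:t)\mapsto(x:y:z:-t)$, deduce the equation $F=t^2L(x,y,z)+C(x,y,z)$, read off the Eckardt point, invert the construction by completing the square, and pin down the axis by a coefficient comparison to get bijectivity. This all works. Two minor repairs: in ruling out the $2+2$ eigenspace splitting, the fixed locus on $X$ need not be zero-dimensional (a fixed line may lie on $X$); the correct observation is that the fixed locus cannot contain a plane cubic curve, which is what the hypothesis requires. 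Also, you should note that the binary cubic $C(x,y,0)$ has no repeated factor --- a repeated factor would produce a non-reduced hyperplane section and force $X$ to be singular --- so the three lines through $P$ are genuinely distinct and $P$ is an Eckardt point in the usual sense.

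The one substantive issue is characteristic $2$. The statement says ``arbitrary field,'' while your proof explicitly assumes $\Char\ka\neq 2$ (diagonalizability, completing the square, the homology $t\mapsto -t-2\ell_0$). This assumption is not removable, and in fact the statement as literally written fails in characteristic $2$: there any involution in $\mathrm{PGL}_4$ lifts to a unipotent matrix, so its fixed locus in $\Pro^3_{\kka}$ is a single linear subspace and can never be a hyperplane together with an isolated point; on the other hand, Eckardt points defined over $\ka$ do exist on smooth cubics in characteristic $2$ (for example $(1:1:0:0)$ on the Fermat cubic over $\F_2$), so one side of the claimed bijection is always empty while the other may not be. Your hypothesis is therefore exactly the right one, but the discrepancy is worth flagging explicitly, since the paper later invokes consequences of this proposition (through Lemma~\ref{Eckinv} and Proposition~\ref{twist-to-use}) over $\F_2$ in Lemma~\ref{c10f2eck}.
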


\begin{lemma}
\label{Eckinv}
The involution $g$ has type $c_3$ (see Table \ref{table1}).
\end{lemma}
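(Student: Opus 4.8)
The plan is to make the action of $g$ on $\Pic(\XX)$ completely explicit and then to read off its conjugacy class from Table~\ref{table1}. First I would pass to $\kka$ and use Proposition~\ref{EckInv}: the involution $g$ fixes the isolated Eckardt point $P$ together with a hyperplane section $C=\Pi\cap\XX$. Since $\Char\ka\ne 2$ and $\XX$ is smooth, the fixed locus of $g$ is smooth, so $C$ is a smooth plane cubic; being smooth it is an irreducible elliptic curve, and in particular it contains no line. Thus $\mathrm{Fix}(g)=C\sqcup\{P\}$, a disjoint union.

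Next I would determine the $(-1)$-curves fixed by $g$. Because $C$ contains no line, each of the $27$ lines meets $\Pi$ in a single point, and that point lies on $C$, hence is fixed by $g$. If a line $\ell$ is $g$-invariant but not pointwise fixed, then $g|_\ell$ is a nontrivial involution of $\ell\cong\Pro^1_{\kka}$ with one fixed point on $C$; its second fixed point must be $P$, so $\ell$ passes through $P$. Conversely each of the three lines through the Eckardt point $P$ is $g$-invariant. Hence $g$ fixes exactly the three lines through $P$; these are pairwise intersecting and satisfy $\ell_1+\ell_2+\ell_3\sim -K_{\XX}$, while the remaining $24$ lines split into $12$ transpositions, the two lines of each pair meeting at their common point of $C$.

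I would then compute the trace of $g$ on $\Pic(\XX)\otimes\Q$. By the Lefschetz fixed-point formula (over $\CC$, or by its $\ell$-adic analogue, valid since $\ord(g)=2$ is prime to the characteristic) one has $\chi(\mathrm{Fix}(g))=2+\mathrm{tr}\bigl(g\mid \Pic(\XX)\otimes\Q\bigr)$, and since $\chi(C)=0$ and $\chi(\{P\})=1$ this gives $\mathrm{tr}\bigl(g\mid\Pic(\XX)\otimes\Q\bigr)=-1$. As $g$ is an involution fixing $-K_{\XX}$, it has three eigenvalues $+1$ (the span of $\ell_1,\ell_2,\ell_3$) and four eigenvalues $-1$, so its characteristic polynomial on $\Pic(\XX)\otimes\Q$ is $(t-1)^3(t+1)^4$ and its trace on the six-dimensional root lattice $E_6$ equals $-2$.

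Finally I would identify the class. Recall the element $cs$ from Proposition~\ref{minclass}, where $c=(14)(25)(36)$; a direct computation gives $cs(E_i)=Q_{c(i)}$, $cs(Q_i)=E_{c(i)}$ and $cs(L_{ij})=L_{c(i)c(j)}$, so $cs$ is an involution fixing exactly the three pairwise-intersecting lines $L_{14},L_{25},L_{36}$ (whose sum is $-K_{\XX}$) and having trace $-1$ on $\Pic(\XX)\otimes\Q$. Thus $g$ and $cs$ have the same cycle type $1^3 2^{12}$ on the $27$ lines and the same numerical data, and comparison with Table~\ref{table1} yields type $c_3$. The main obstacle is precisely this matching step: to be sure the computed invariants single out one class rather than merely constraining $g$, I would invoke the classification of involutions in $W(E_6)$ — there is a unique conjugacy class with trace $-2$ on the reflection representation, the class $4A_1$, to which both $g$ and $cs$ belong — so that $g$ is conjugate to $cs$ and hence of type $c_3$.
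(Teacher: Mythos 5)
Your proof is correct, but it reaches the conclusion by a genuinely different route than the paper. The paper's proof computes the quotient: by the Hurwitz formula $K^2_{\XX/\langle g\rangle}=\frac{1}{2}\left(2K_{\XX}\right)^2=6$, so $\XX/\langle g\rangle$ is a singular del Pezzo surface of degree $6$ with one $A_1$ point, whence the invariant Picard number equals $3$; Table~\ref{table1} then shows that $c_3$ is the only order-$2$ class with invariant Picard number $3$. You instead compute the trace of $g$ on $\Pic(\XX)\otimes\Q$ via the Lefschetz fixed-point formula applied to the fixed locus (a smooth genus-one curve together with the isolated point $P$), getting trace $-1$, equivalently eigenvalues $(+1)^2(-1)^4$ on $K_{\XX}^{\perp}$; Table~\ref{table1} again gives $c_3$ as the unique involution class with these eigenvalues. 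For an involution fixing $K_{\XX}$ these two invariants carry the same information (trace $-1$ on the seven-dimensional $\Pic(\XX)\otimes\Q$ is the same as a three-dimensional invariant subspace), and both arguments end with the same uniqueness check against the table, so the difference lies purely in how the invariant is computed: quotient geometry versus Lefschetz. Each has a side benefit: the paper's computation also identifies $\XX/\langle g\rangle$ explicitly, which is reused in Subsection~\ref{cubic_construction}, while your analysis of the $27$ lines ($3$ invariant lines through $P$, the remaining $24$ falling into $12$ transposed pairs) essentially re-derives the content of Lemma~\ref{UniInv}. Your closing step matching $g$ with $cs$ is redundant: once the trace on $K_{\XX}^{\perp}$ is known to be $-2$, the eigenvalue column of Table~\ref{table1} already pins down $c_3$ without exhibiting an explicit representative. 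One shared caveat: your use of smoothness of the fixed locus and of the prime-to-characteristic Lefschetz formula requires $\Char\ka\neq 2$, exactly as the paper's Hurwitz computation does, even though the lemma is later invoked over $\F_2$ in Section 6; this is a limitation of the paper's own argument as well, not one your approach introduces.
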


\begin{proof}
By definition, the fixed locus of $g$ consists of a hyperplane and an isolated point. By the Hurwitz formula one has
$$
K_{\XX/\langle g \rangle}^2 = \frac{1}{2} \left( 2K_{\XX} \right)^2 = 6,
$$
\noindent and the quotient $\XX / \langle g \rangle$ is a singular del Pezzo surface of degree $6$ with one $A_1$ singular point. 
Thus $\rho(\XX / \langle g \rangle) = 3$.

From Table \ref{table1} one can see that $c_3$ is the only conjugacy class of elements of order $2$ such that the invariant Picard number is equal to $3$.
\end{proof}

\begin{lemma}
\label{UniInv}
Let $X$ be a cubic surface over $\F_q$, and let $P \in X(\F_q)$ be an Eckardt point. Denote by $R_1$, $R_2$ and $R_3$ three lines on $\XX$ passing through $P$. Assume that $h_1$ and $h_2$ are (geometric or arithmetic) automorphisms of $\XX$ of type $c_3$ such that $h_1R_i = h_2R_i=R_i$. Then the images of $h_1$ and $h_2$ are equal in $W(E_6)$.
Moreover, any hyperplane section containing $R_i$ and two other lines on $\XX$ is $h_1$-invariant.
\end{lemma}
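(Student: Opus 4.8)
The plan is to show that the two hypotheses pin down the $W(E_6)$-action on all $27$ lines uniquely, so that $h_1$ and $h_2$ cannot differ. Everything takes place inside the configuration of $27$ lines, on which $W(E_6)$ acts faithfully; hence it suffices to prove that the image in $W(E_6)$ of a type $c_3$ involution $h$ with $hR_i=R_i$ is completely determined by these conditions. Throughout let $h$ denote the image in $W(E_6)$ of either $h_1$ or $h_2$.

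First I would identify the invariant subspace. By Lemma~\ref{Eckinv} and Table~\ref{table1}, an element of type $c_3$ has invariant Picard number $3$, so its $(+1)$-eigenspace $V$ on $\Pic(\XX)\otimes\Q$ is three-dimensional. The classes $R_1$, $R_2$, $R_3$ lie in $V$, and their Gram matrix (with $R_i^2=-1$ and $R_i\cdot R_j=1$ for $i\neq j$) has determinant $4\neq 0$, so they are linearly independent. Therefore $V=\langle R_1,R_2,R_3\rangle_{\Q}$, and a class is $h$-invariant exactly when it lies in $V$.

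Next I would determine $h$ on an arbitrary line. Since $R_1+R_2+R_3\sim -K_{\XX}$ and $D\cdot(-K_{\XX})=1$ for every line $D$, a line $D\notin\{R_1,R_2,R_3\}$ meets exactly one of the $R_i$, say $R_1$. The class $D+hD$ is $h$-invariant, hence lies in $V$; writing $D+hD\sim aR_1+bR_2+cR_3$ and using $hD\cdot R_i=hD\cdot hR_i=D\cdot R_i$ yields $(D+hD)\cdot R_1=2$ and $(D+hD)\cdot R_2=(D+hD)\cdot R_3=0$. Solving the resulting linear system (again governed by the Gram matrix above) forces $a=0$ and $b=c=1$, that is
\[
hD\sim -K_{\XX}-R_1-D,
\]
which is precisely the third line of the unique tritangent plane through $R_1$ and $D$. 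This formula involves no choices, so it is the same for $h_1$ and $h_2$; together with $h_iR_j=R_j$ it determines the permutation of the $27$ lines completely, and therefore the images of $h_1$ and $h_2$ in $W(E_6)$ coincide.

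The \emph{moreover} statement is then immediate: a hyperplane section containing $R_i$ and two further lines is a tritangent plane $\{R_i,D,D'\}$ with $D'\sim -K_{\XX}-R_i-D$, and the formula shows that $h_1$ fixes $R_i$ and interchanges $D$ and $D'$, so the plane is $h_1$-invariant (the Eckardt plane $\{R_1,R_2,R_3\}$ being invariant since all three lines are fixed). The only genuinely delicate point is the first step, namely verifying that $R_1$, $R_2$, $R_3$ already exhaust the invariant subspace $V$; once the dimension count coming from type $c_3$ is combined with their linear independence, everything else is forced by linear algebra over the intersection lattice.
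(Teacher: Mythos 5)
Your proposal is correct, and it arrives at the paper's central formula $h_iD\sim -K_{\XX}-R_j-D$ by a genuinely different route. The paper obtains the two crucial facts---that an element of type $c_3$ fixes exactly three lines, and that every non-fixed line $D$ satisfies $D\cdot h_1D=1$---by an explicit check on the Fermat cubic with the involution $(x:y:z:t)\mapsto(y:x:z:t)$, implicitly transferring these conjugation-invariant properties to $h_1$ and $h_2$; once $D$, $h_1D$ and $R_j$ pairwise meet, $h_1D$ is forced to be the third line of the tritangent plane through $R_j$ and $D$. You instead work entirely in the Picard lattice: the eigenvalue data for $c_3$ in Table~\ref{table1} (already invoked in Lemma~\ref{Eckinv}) gives a three-dimensional $(+1)$-eigenspace, your Gram-matrix computation (determinant $4\neq 0$) identifies it with $\langle R_1,R_2,R_3\rangle_{\Q}$, and then invariance of the class $D+h D$ pins down $hD$ by solving a small linear system. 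Your argument is more self-contained---it needs no model surface and no appeal to conjugacy of $c_3$ elements---and it even delivers $hD\neq D$ as a by-product rather than requiring it as input: if $hD=D$ then $2D\sim R_2+R_3$, which is impossible since the left-hand side has square $-4$ and the right-hand side has square $0$. What the paper's computation buys in exchange is an explicit geometric picture of such an involution (useful elsewhere in the paper, e.g.\ in Proposition~\ref{cases_c11_c13}, where the fact that non-invariant lines map to lines meeting them is reused); for the uniqueness assertion of this lemma alone, your lattice-theoretic derivation suffices and is arguably cleaner.
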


\begin{proof}

One can consider the Fermat cubic surface over $\CC$ with its automorphism of type~$c_3$ given by $(x:y:z:t) \mapsto (y:x:z:t)$ and check that this automorphism has exactly three invariant lines and each other line $D$ maps to a line meeting $D$ at a point.

Let $D$ be any line on $\XX$ that differs from $R_i$. Then $h_1 D \ne D$, and $D$ meets exactly one of the lines $R_i$, since $R_1 + R_2 + R_3 \sim -K_X$.
Let $R_j \cdot D = 1$. Then $D \cdot h_1D = 1$, and $R_j \cdot h_1D = h_1R_j \cdot h_1D = 1$. Therefore for each $D$ meeting $R_j$ at a point there is only one possibility for the image $h_1D \sim -K_X - R_j - D$.
This argument shows that $h_1D=h_2D$ for all $D$. This proves the lemma.
\end{proof}

\subsection{Forms of algebraic varieties}

Let $X_1$ and $X_2$ be two algebraic varieties over an algebraically non-closed field $\ka$, and let $L$ be a finite Galois extension of $\ka$. If $X_1 \otimes L$ and $X_2 \otimes L$ are isomorphic then we say that $X_2$ is \textit{an $L$-form} of $X_1$.

The set of $L$-forms of a variety $X$ is isomorphic to $H^1\left(\Gal(L/\ka), \Aut(X \otimes L) \right)$. For any $\varphi\in H^1\left(\Gal(L/\ka), \Aut(X \otimes L) \right)$ the corresponding variety is called \emph{the twist of $X$ by~$\varphi$}.

\begin{proposition}
\label{dPtwist}
Let $X_1$ be a smooth algebraic variety over $\F_q$ such that a cyclic group~$G$ acts on $X_1$ and this action induces a faithful action of $G$ on the group $\Pic(\XX_1)$. Let $\Gamma_1$ be the image of the Galois group $\Gal\left(\overline{\F}_q / \F_q \right)$ in the group $\Aut\left(\Pic(\XX_1)\right)$. Let $\gamma$ and $g$ be the generators of $\Gamma_1$ and $G$ respectively.

Then there exists a variety $X_2$ such that the image $\Gamma_2$ of the Galois group $\Gal\left(\overline{\F}_q / \F_q \right)$ in the group $\Aut\left(\Pic(\XX_2)\right) \cong \Aut\left(\Pic(\XX_1)\right)$ is generated by the element $g\gamma$.

\end{proposition}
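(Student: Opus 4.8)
The plan is to construct $X_2$ as a twist of $X_1$ by the cocycle determined by $g$, in the sense of the formalism recalled in the previous subsection, and then to read off the Frobenius action on the Picard group of the twist. Recall that the $\F_{q^N}$-forms of $X_1$ are classified by $H^1\left(\Gal(\F_{q^N}/\F_q),\Aut(X_1\otimes\F_{q^N})\right)$, and that $\Gal(\overline{\F}_q/\F_q)$ is procyclic, topologically generated by the Frobenius element $\sigma$. A continuous $1$-cocycle into $\Aut(\XX_1)$ is specified by its value $c_\sigma$ together with the requirement that it factor through a finite quotient. Because the action of $G$ on $X_1$ is defined over $\F_q$, the automorphism $g$ commutes with the semilinear Galois action, i.e.\ $\sigma$ fixes $g$; hence setting $c_\sigma=g$ yields an honest homomorphism $c_{\sigma^k}=g^k$, which factors through $\Gal(\F_{q^N}/\F_q)$ for $N=\ord g$. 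The norm (cocycle) condition then reduces to $g^N=1$, which holds. As Galois descent for quasiprojective varieties is effective, this cocycle defines a variety $X_2$ over $\F_q$ with $\XX_2\cong\XX_1$ over $\overline{\F}_q$; this $X_2$ is the desired twist.

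Next I would identify the Frobenius action on $\Pic(\XX_2)$. Under descent, the $\F_q$-structure of $X_2$ corresponds to the twisted semilinear action $\sigma\mapsto g\circ\sigma_*$, where $\sigma_*$ is the original $\sigma$-semilinear automorphism of $\XX_1$ encoding the $\F_q$-structure of $X_1$. Since $g$ is linear and $\sigma_*$ is semilinear, the composite $g\sigma_*$ is again $\sigma$-semilinear, and using $\sigma_* g\sigma_*^{-1}=g$ one checks $(g\sigma_*)^k=g^k\sigma_*^k$, confirming that this is a genuine descent datum. Passing to the Picard group and using the identification $\Pic(\XX_2)=\Pic(\XX_1)$, the map induced by $g\sigma_*$ is the composite of the map induced by $g$ and the map $\gamma$ induced by $\sigma_*$. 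Because $g$ is defined over $\F_q$, its image in $\Aut(\Pic(\XX_1))$ commutes with $\gamma$, so this composite is independent of order and equals $g\gamma$ under the isomorphism $\Aut(\Pic(\XX_2))\cong\Aut(\Pic(\XX_1))$. Since $\Gamma_2$ is the image of the procyclic Galois group, it is cyclic and generated by the image of $\sigma$, which is exactly $g\gamma$; thus $\Gamma_2=\langle g\gamma\rangle$.

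The routine verifications — continuity and the norm condition for the cocycle, and effectivity of descent — are standard. The one point demanding genuine care is the bookkeeping of semilinear versus linear composition: one must make sure that ``twisting the Frobenius by $g$'' produces precisely $g\gamma$ on $\Pic$, and not $\gamma g$ or an inverse. This is exactly where the hypothesis that $G$ acts over $\F_q$ is essential, since it guarantees that $g$ and $\gamma$ commute in $\Aut(\Pic(\XX_1))$ — which is also what makes $\langle g\gamma\rangle$ cyclic and hence a candidate for the image of a procyclic group. Finally, the assumption that the $G$-action on $\Pic(\XX_1)$ is faithful is what allows me to regard $g$ directly as the prescribed element of $\Aut(\Pic(\XX_1))$, so that the statement ``$\Gamma_2$ is generated by $g\gamma$'' is unambiguous.
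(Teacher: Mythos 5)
Your proof is correct and follows essentially the same route as the paper: both construct $X_2$ as the twist of $X_1$ by the cocycle sending Frobenius to $g$ (using that the Galois action on $G$ is trivial because $G$ is defined over $\F_q$, so the cocycle is just a homomorphism), and then read off the twisted Galois action on $\Pic(\XX_2)$ as $g\gamma$. The only difference is presentational — you make the semilinear descent bookkeeping and the commutation $\sigma_* g \sigma_*^{-1}=g$ explicit, while the paper packages the same facts via the identification $H^1(\Gal(\F_{q^n}/\F_q),G)\cong\Hom(\Gal(\F_{q^n}/\F_q),G)$.
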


\begin{proof}
Let $m$ be the order of $G$. Choose $n$ such that both $m$ and $\ord \Gamma$ divide $n$.
The inclusion $G\to \Aut\left(\Pic(\XX_1)\right)$ induces the map $$H^1(\Gal(\F_{q^n}/\F_q),G)\to H^1(\Gal(\F_{q^n}/\F_q),\Aut\left(\Pic(\XX_1)\right)).$$
Since the action of $\Gal(\F_{q^n}/\F_q)$ on $G$ is trivial, the group $H^1(\Gal(\F_{q^n}/\F_q),G)$ is isomorphic to $\Hom(\Gal(\F_{q^n}/\F_q),G)$. 
On the other hand, the orders of both groups are equal to $m$, therefore the group $\Hom(\Gal(\F_{q^n}/\F_q),G)$ is also cyclic of order $m$ and generated by the map $\varphi$  given by the formula:
$\varphi(\sigma^r)=g^r$, where $\sigma$ generates $\Gal(\F_{q^n}/\F_q)$. Let $X_2$ be the twist of $X_1$ by the cocycle $\varphi$. Then the Galois module $\Pic(\XX_2)$ is a twist of $\Pic(\XX_1)$ by~$\varphi$. 
It follows that $\Gamma_2$ is generated by the element $g\gamma$.
\end{proof}

We need the following corollary of Proposition \ref{dPtwist} to construct minimal cubic surfaces of types $(c_{11})$, $(c_{12})$ and $(c_{10})$.

\begin{proposition}
\label{twist-to-use}
~

\begin{enumerate}
\item There exists a cubic surface of type $(c_{11})$ with an Eckardt point defined over $\F_q$, if and only if there exists a cubic surface of type $(c_{12})$ with an Eckardt point defined over $\F_q$.

\item There exists a cubic surface of type $(c_{10})$ with an Eckardt point defined over $\F_q$, if and only if there exists a cubic surface $X$ with an Eckardt point defined over $\F_q$ such that the image $\Gamma$ of the Galois group $\Gal\left(\overline{\F}_q / \F_q \right)$ in the group $\Aut\left(\Pic(\XX)\right)$ is generated by an element of type $III$.
\end{enumerate}

\end{proposition}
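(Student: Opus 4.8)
The plan is to use Proposition~\ref{dPtwist} as the engine for both equivalences, with the involution produced by an Eckardt point (Proposition~\ref{EckInv}, of type $c_3$ by Lemma~\ref{Eckinv}) playing the role of the cyclic group $G$. The two statements are formally parallel: in each case I want to show that the existence of a cubic surface with an Eckardt point and a prescribed Frobenius image $\Gamma$ is equivalent to the existence of one with Frobenius image $g\gamma$, where $g$ is the $c_3$-involution attached to the Eckardt point. The key arithmetic fact is that twisting by this involution is an involution on the set of surfaces-with-Eckardt-point: since $g$ has order $2$, the construction of Proposition~\ref{dPtwist} applied twice returns a surface whose Frobenius image is generated by $g^2\gamma = \gamma$, so the operation is its own inverse. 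This symmetry gives both directions of each ``if and only if'' at once.

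For part~(1), I would first fix the conjugacy-class data from Proposition~\ref{minclass} and Table~\ref{table1}. Type $(c_{11})$ means $\Gamma = \langle \gamma\rangle$ with $\gamma$ of order $3$ of type $I$; type $(c_{12})$ means $\Gamma$ is cyclic of order $6$ containing an element of type $I$. The plan is to check that multiplying a generator of the $(c_{11})$ group by the $c_3$-involution $g$ (which commutes with $\gamma$ because an Eckardt point defined over $\F_q$ forces its involution to commute with Frobenius, see Lemma~\ref{UniInv} and the paragraph on Eckardt points in the introduction) produces an element of order $6$ whose cube has type $I$, i.e.\ lands in class $(c_{12})$. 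Conversely, multiplying a $(c_{12})$-generator by $g$ lands back in $(c_{11})$. The crucial compatibility to verify is that $g$ genuinely commutes with $\gamma$ in $W(E_6)$: this is exactly where I use that the Eckardt point, hence the lines $R_1,R_2,R_3$ through it, is $\Gamma$-stable, so $\gamma$ normalizes the data defining $g$ and by the uniqueness in Lemma~\ref{UniInv} we get $\gamma g \gamma^{-1} = g$. Once commutativity is in hand, Proposition~\ref{dPtwist} (with $G = \langle g\rangle$) produces the twisted surface, and one must only confirm that the twist again carries an Eckardt point over $\F_q$ — which it does, because the Eckardt point and its three lines are Galois-invariant in both the original and the twisted Frobenius action, the line data being unchanged by $g$ (recall $sL_{ij}=L_{ij}$, so $c_3$ fixes the relevant hyperplane section by Lemma~\ref{UniInv}).

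For part~(2), the argument is the same but with the target class identified explicitly. Type $(c_{10})$ is, by Proposition~\ref{minclass}, the order-$6$ group generated by $(123)(456)$ and $cs$, where $cs$ is precisely a $c_3$-type involution; so a $(c_{10})$ surface with an Eckardt point has its Frobenius generator of the form $g\gamma$ with $\gamma=(123)(456)$ of type $III$ and $g=cs$ the Eckardt involution. Twisting by $g$ therefore replaces $g\gamma$ by $\gamma$, giving a surface whose Frobenius image is $\langle(123)(456)\rangle$, generated by an element of type $III$; the reverse twist recovers $(c_{10})$. I would verify that $(123)(456)$ commutes with $cs$ — this is immediate since $(123)(456)$ permutes $\{E_i\},\{Q_i\}$ within the $s$-swapped pairs and commutes with $c=(14)(25)(36)$ in $\SG_6$ — and that the type-$III$ element indeed admits a commuting $c_3$-involution, which is the content of Lemma~\ref{typeIIImin} showing $(123)(456)$ and $cs$ generate a cyclic group of order $6$.

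The main obstacle I anticipate is not the cohomological machinery, which Proposition~\ref{dPtwist} packages cleanly, but the bookkeeping that the twisted surface still possesses an Eckardt point \emph{defined over $\F_q$}. The twist changes the Galois action but not the geometric surface $\XX$, so the nine geometric Eckardt points persist; what must be checked is that the specific point $P$ fixed by $g$ remains $\F_q$-rational under the new Frobenius $g\gamma$. Since $g$ fixes $P$ and the new Frobenius is $g\gamma$ with $\gamma$ already fixing $P$ (the Eckardt point being rational for $X_1$), the point $P$ is fixed by $g\gamma$, hence rational on $X_2$; the delicate step is confirming that this set-theoretic fixedness corresponds to an actual $\F_q$-point on the twisted model, which follows because the twist is by an automorphism fixing $P$, so $P$ descends to an $\F_q$-point of $X_2$. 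Assembling these observations yields both equivalences.
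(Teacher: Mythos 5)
Your overall strategy --- twist by the Eckardt involution $g$ of type $c_3$ via Proposition~\ref{dPtwist} --- is the same as the paper's, and your forward directions (from the order-$3$ groups to the order-$6$ ones) are sound, as is your care that the Eckardt point stays $\F_q$-rational on the twist. But there is a genuine gap in both converse directions. Starting from a surface of type $(c_{12})$ (resp.\ $(c_{10})$) with an Eckardt point, you assert that ``multiplying a $(c_{12})$-generator by $g$ lands back in $(c_{11})$'' (resp.\ that the Eckardt involution $g$ \emph{is} the element $cs$). Neither claim follows from what you establish. Commutativity of $g$ with the order-$6$ generator $\gamma$ --- which is all you extract from Lemma~\ref{UniInv} --- only guarantees that $g\gamma$ generates a cyclic group; it does not force $g\gamma$ to have order $3$. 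If $g \ne \gamma^3$ in $W(E_6)$, then $(g\gamma)^3 = g\gamma^3$ is a nontrivial involution, so $g\gamma$ again has order $6$; and since $(g\gamma)^2 = \gamma^2$ has type $I$, the twisted surface would again be of type $(c_{12})$, and the existence of a $(c_{11})$ surface would not follow. Your ``twisting is its own inverse'' symmetry does not repair this: it shows that surfaces lying in the image of the twist construction return to their source, not that an \emph{arbitrary} $(c_{12})$ or $(c_{10})$ surface with an Eckardt point twists down to an order-$3$ group.

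The missing step is precisely the identification $g = \gamma^3$ in $W(E_6)$, and this is the actual content the paper extracts from Lemma~\ref{UniInv} (not mere commutativity). Concretely: for $\gamma$ of type $c_{12}$ or $c_{10}$, Table~\ref{table1} shows that $\gamma^3$ has type $c_3$; moreover $\gamma^2$ has type $I$ (resp.\ $III$) and hence fixes no line, so $\gamma$ permutes the three lines $R_1,R_2,R_3$ through the Eckardt point in a $3$-cycle, and therefore $\gamma^3$ fixes each $R_i$. Applying the uniqueness in Lemma~\ref{UniInv} to the two type-$c_3$ elements $\gamma^3$ and $g$, both fixing each $R_i$, gives $\gamma^3 = g$ in $W(E_6)$, whence $g\gamma = \gamma^4$ has order $3$ and type $I$ (resp.\ $III$). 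With this inserted, your argument closes. (Two smaller points: in part (1) you say the product is an order-$6$ element ``whose cube has type $I$'' --- the cube of an order-$6$ element is an involution; you mean its square $(g\gamma)^2=\gamma^2$. And in the forward direction of part (2) you should also rule out the classes $c_{22}$ and $c_{23}$, whose squares are likewise of type $III$; this follows since $(g\gamma)^3 = g$ has type $c_3$, whereas the cubes of $c_{22}$ and $c_{23}$ have types $c_{16}$ and $c_{17}$ --- the ``careful analysis of Table~\ref{table1}'' the paper refers to.)
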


\begin{proof}
If we have a cubic surface $X_1$ with an Eckardt point defined over $\F_q$, then, by Lemma~\ref{Eckinv}, the group $G$ of order $2$ generated by an element $g$ of type $c_3$ acts on $X_1$. Let~$\Gamma_1$ be the image of the Galois group $\Gal\left(\overline{\F}_q / \F_q \right)$ in the group $\Aut\left(\Pic(\XX_1)\right)$, and~$\gamma$ be the generator of $\Gamma_1$. By Proposition \ref{dPtwist} there exists a cubic surface $X_2$ such that the image $\Gamma_2$ of the Galois group $\Gal\left(\overline{\F}_q / \F_q \right)$ in the group $W(E_6)$ is generated by the element~$g\gamma$.

Since the elements $\gamma$ and $g$ commute in $W(E_6) \subset \mathrm{GL}_6(\mathbb{Z})$, these elements can be simultaneously diagonalized in $\mathrm{GL}_6(\mathbb{C})$. Let $\lambda_1$, $\dots$, $\lambda_6$ be the eigenvalues of $\gamma$, and let $\mu_1$, $\dots$, $\mu_6$ be the eigenvalues of $g$. Then there exists a permutation $\sigma \in \SG_6$ such that the eigenvalues of $g\gamma$ are $\lambda_1\mu_{\sigma(1)}$, $\dots$, $\lambda_6\mu_{\sigma(6)}$.

If $\ord \Gamma_1 = 3$ and $\gamma$ has type $I$ or $III$, then the proposition follows from the careful analysis of Table \ref{table1}. If $\gamma$ has type $c_{10}$ or $c_{12}$, then, by Lemma \ref{UniInv}, actions of elements~$\gamma^3$ and $g$ on $\Pic(\XX)$ coinside in $W(E_6)$. Thus $g\gamma$ is an element of order $3$ and of type~$I$ or $III$ respectively.
\end{proof}

\subsection{}\label{cubic_construction}
The quotient of $X$ by the involution $g$ is a del Pezzo surface of degree $6$ with a singular point of type $A_1$. The blowup of this singular point is a smooth surface $Y$ which is the blowup of $\Pro^2_{\F_q}$ at three points, say $T_1, T_2$ and $T_3$, lying on a line $W$. Denote by~$\pi$ the (non-regular) transformation from $X$ to $\Pro^2_{\F_q}$. The irregularity locus is the Eckardt point $P$.
Let $E\subset \Pro^2_{\F_q}$ be the image of the fixed hyperplane section of $g$ under $\pi$. It is not hard to see that $E$ is a smooth curve of genus $1$, and that $E\cap W=\{T_1,T_2,T_3\}$. 
For any line $D$ on $X$ such that $P\not\in D$ its image $\ell=\pi(D)$ is a line on $\Pro^2_{\F_q}$ passing through $T_i$ for some $i$, and tangent to $E$ at some other point $P_i$. 
We say that such a line $\ell$ on $\Pro^2_{\F_q}$ is \emph{distinguished}. Denote by $T(\ell)$ the corresponding point $T_i$ and by $P(\ell)$ the point $P_i$. 
There are exactly four distinguished lines for each of the points $T_1$, $T_2$, and $T_3$.

We can inverse this construction. Start with an elliptic curve $E$ in $\Pro^2_{\F_q}$ and a line~$W$, which intersects $E$ at three points $T_1$, $T_2$ and $T_3$. Then blow up these three points, contract the transform of $W$, and obtain a singular del Pezzo surface $Y$ of degree $6$. Take a double cover $X$ of $Y$, branched in the transform of $E$. The surface $X$ is a smooth cubic surface with an Eckardt point defined over $\F_q$.

\begin{lemma}\label{triangle_lemma}
Let $D_1, D_2$ and $D_3$ be three lines on $X$ such that $D_1+D_2+D_3$ is a hyperplane section, and $P\not\in D_1\cup D_2\cup D_3$. Then (possibly after renumeration) $T_i\in\pi(D_i)$. 
Moreover, points $P(\pi(D_1)), P(\pi(D_2))$, and $P(\pi(D_3))$ lie on a line.

Conversely, suppose that points $P_1, P_2, P_3\in E$ lie on a line. Take a distinguished line  $\ell_i$ such that $P(\ell_i)=P_i$, and $T(\ell_i)=T_i$ for all $i\in\{1,2,3\}$. 
Then the divisor $\pi^{-1}_*(\ell_1)+\pi^{-1}_*(\ell_2)+\pi^{-1}_*(\ell_3)$ is a sum of two hyperplane sections.
\end{lemma}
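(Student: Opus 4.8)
The plan is to translate the geometry on the cubic surface $X$ down to the plane $\Pro^2_{\F_q}$ via the rational map $\pi$ constructed in Section~\ref{cubic_construction}, and to exploit the observation that the twisting involution $g$ is precisely the deck transformation of the double cover $X \to Y$. First I would treat the forward direction. Given three lines $D_1, D_2, D_3$ on $X$ forming a hyperplane section with $P \notin D_1 \cup D_2 \cup D_3$, I claim each $D_i$ avoids $P$ and hence maps under $\pi$ to a distinguished line $\ell_i$. Since the $D_i$ are not among the three $g$-invariant lines through $P$ (those would pass through the Eckardt point), by Lemma~\ref{UniInv} the involution $g$ sends each $D_i$ to the line $-K_X - R_{j} - D_i$ meeting it; in particular $\{D_1,D_2,D_3\}$ cannot be $g$-invariant as a hyperplane section through $P$. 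The key point is that $\ell_i = \pi(D_i)$ is tangent to $E$ at $P(\ell_i)$ and passes through $T(\ell_i)$. To get the renumeration $T_i \in \pi(D_i)$, I would argue that the three distinguished lines $\ell_1,\ell_2,\ell_3$ cannot all pass through the same $T_j$: the four distinguished lines through a fixed $T_j$ are the images of the four lines through $P$ meeting a fixed $g$-invariant line $R_j$, and no three of these can form a hyperplane section because their preimages are mutually intersecting in a way incompatible with $D_1 + D_2 + D_3 \sim -K_X$. Hence after renumeration the three points $T_1, T_2, T_3$ are distributed one per line.

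For the collinearity of $P(\pi(D_1)), P(\pi(D_2)), P(\pi(D_3))$, I would use the relation between hyperplane sections on $X$ and the branch curve $E$. A hyperplane section $D_1 + D_2 + D_3$ is cut out by a plane in $\Pro^3_{\F_q}$, and its image under the projection $\pi$ is governed by the intersection of that plane with the fixed hyperplane of $g$. Concretely, the double cover $X \to Y$ is branched along the transform of $E$, and the three tangency points $P_i = P(\ell_i)$ are exactly the branch points where the $D_i$ meet the fixed locus of $g$. The condition that $D_1 + D_2 + D_3$ is a hyperplane section forces the three tangency points to lie on the residual line of the hyperplane in the fixed plane; concretely, the line in which the cutting plane meets the hyperplane $\{t = 0\}$ descends to a line in $\Pro^2_{\F_q}$ through $P_1, P_2, P_3$. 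Here I would make the linear-equivalence bookkeeping precise: pulling back the line class on $\Pro^2_{\F_q}$ and comparing with $-K_X$ expressed through the $D_i$ and the $g$-invariant lines $R_j$.

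For the converse, I would simply reverse this chain of equivalences. Starting from three collinear points $P_1, P_2, P_3$ on $E$ and choosing distinguished lines $\ell_i$ with $P(\ell_i) = P_i$ and $T(\ell_i) = T_i$, each $\ell_i$ lifts under $\pi^{-1}_*$ to a line on $X$ (the proper transform). The collinearity of the $P_i$ produces, via the branch-curve relation, a plane in $\Pro^3_{\F_q}$ whose section decomposes; because the double cover is two-to-one away from the branch locus, each tangency contributes a pair of lines, and the total divisor $\pi^{-1}_*(\ell_1) + \pi^{-1}_*(\ell_2) + \pi^{-1}_*(\ell_3)$ has anticanonical degree $6$, hence is a sum of two hyperplane sections. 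I expect the main obstacle to be the bookkeeping in the first direction: verifying the renumeration $T_i \in \pi(D_i)$ and controlling exactly which of the $27$ lines avoid the Eckardt point $P$, since one must rule out degenerate distributions of the $T_j$ among the three lines and match the tangency data on $E$ with the intersection pattern on $X$. The cleanest route is probably to pass through the explicit model of Section~\ref{cubic_construction}, where $E$, $W$, and the distinguished lines are concrete, rather than arguing abstractly in $\Pic(\XX)$.
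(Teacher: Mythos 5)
Your forward direction is essentially the paper's own argument: realize $\pi$ as the projection from $P$ to the $g$-fixed plane $H$, note that the tangency point of $\pi(D_i)$ is the image of the unique $g$-fixed point $D_i\cap gD_i$ of $D_i$ (which lies on the fixed hyperplane section), and obtain collinearity of the three tangency points by intersecting the plane cutting out $D_1+D_2+D_3$ with $H$. One repair is needed: your renumeration step only excludes all three lines meeting the same $R_j$, which does not yet give ``one $T_j$ per line'' (two lines could still share a $T_j$). Both degenerations are killed at once by the counts $R_j\cdot(D_1+D_2+D_3)=R_j\cdot(-K_X)=1$ and $D_i\cdot(R_1+R_2+R_3)=D_i\cdot(-K_X)=1$, which make the incidence between the $D_i$ and the $R_j$ a bijection; this is cleaner than your argument about the four distinguished lines through a fixed $T_j$.

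The converse, however, has a genuine gap. Your concluding inference is that $\pi^{-1}_*(\ell_1)+\pi^{-1}_*(\ell_2)+\pi^{-1}_*(\ell_3)$ ``has anticanonical degree $6$, hence is a sum of two hyperplane sections.'' That implication is false: for \emph{any} choice of distinguished lines with $T(\ell_i)=T_i$, collinear tangency points or not, one has $\pi^{-1}_*(\ell_i)=D_i+gD_i\sim -K_X-R_i$ (also note this is a pair of lines, not a single line as you first assert), so the sum always has class $-2K_X$; yet by your own forward direction the six lines form two plane sections only when $P(\ell_1)$, $P(\ell_2)$, $P(\ell_3)$ are collinear. In other words, your argument never actually uses the collinearity hypothesis, which is the entire content of the converse; having the right class in $\Pic(\XX)$ does not make an effective divisor of six lines break into two coplanar triples. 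The missing construction, which is how the paper proceeds, is: for $i=1,2$ take the plane $H_i$ spanned by $P$ and $\ell_i$; since $\ell_i$ is distinguished, $X\cap H_i=R_i+D_i+gD_i$ splits into three lines; since $D_1\cdot(D_2+gD_2)=D_1\cdot(-K_X-R_2)=1$, after possibly swapping $D_2$ and $gD_2$ one may assume $D_1\cdot D_2=1$; the plane through $D_1$ and $D_2$ then cuts out a third line $D_3$, and the forward direction combined with the collinearity of $P_1,P_2,P_3$ forces $\pi(D_3)$ to be the line through $T_3$ and $P_3$, i.e.\ $\ell_3$. Applying $g$ gives the second section $gD_1+gD_2+gD_3$, and these two hyperplane sections together are exactly $\pi^{-1}_*(\ell_1)+\pi^{-1}_*(\ell_2)+\pi^{-1}_*(\ell_3)$.
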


\begin{proof}
Let $H$ be the $g$-invariant hyperplane section in $\Pro^3_{\F_q}$. Then the map $\pi$ is isomorphic to the projection from the point $P$ to the plane $H$. 
Using this isomorphism we identify the points $T_1$, $T_2$ and $T_3$ with intersection points of $H$ and the lines $R_1$, $R_2$ and $R_3$ passing through $P$. 
Similarly, $W$ can be identified with the intersection of $H$ and the tangent plane $T_{P} X$, and $E$ with the intersection of $H$ and $X$.

Now it is clear, that a line $D_i$ such that $D_i \cdot R_i = 1$ meets the curve $E$ at a $g$-fixed point $D_i \cap gD_i$. Therefore $\pi(D_i)$ passes through $T_i$ and $P_i = \pi\left(D_i \cap gD_i\right)$.  The points $P_1$, $P_2$ and $P_3$ lie on a line, which is the intersection of $H$ and the plane containing $D_1$, $D_2$ and~$D_3$.

Take two hyperplane sections $H_i$ through $P$ and $\ell_i$ for $i=1$ and $i=2$. The intersection of $X$ and $H_i$ is a triple of lines $R_i$, $D_i$, and $D'_i$. The line $D_1$ intersects 
$D_2$ or $D'_2$. We may assume that $D_1 \cdot D_2=1$. Thus there is the third line $D_3$ in the hyperplane section through $D_1$ and $D_2$, and, similarly, the line $D'_3$ in the hyperplane section through $D'_1$ and~$D'_2$.
\end{proof}

This lemma motivates the following definition.
We say that three distinguished lines $\ell_1$, $\ell_2$ and $\ell_3$ form a \emph{triangle}, if $T(\ell_i)=T_i$, and the points $P(\ell_1)$, $P(\ell_2)$ and $P(\ell_3)$ lie on a line.  

\begin{proposition}
\label{cases_c11_c13}
Suppose there exists an elliptic curve $E\subset \Pro^2_{\F_q}$ and a line $W\subset \Pro^2_{\F_q}$ such that the three points $T_1, T_2$, and $T_3$ of the intersection of $E$ and $W$ form a Galois orbit.

If all distinguished lines are not defined over $\F_{q^3}$, then the corresponding cubic surface has type $(c_{13})$.

If all distinguished lines are defined over $\F_{q^3}$, and the set of distinguished lines is the union of $4$ Galois orbits, that are triangles of distinguished lines, 
then the corresponding cubic surface has type $(c_{11})$ or $(c_{12})$.
\end{proposition}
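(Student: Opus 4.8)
The plan is to read off the conjugacy class of the Frobenius element $\gamma$ in $W(E_6)$ from its action on the $27$ lines, organized by means of the Eckardt involution $g$. By Lemma~\ref{Eckinv} the involution $g$ has type $c_3$, it fixes the three lines $R_1,R_2,R_3$ through the Eckardt point $P$, and since $P$ is defined over $\F_q$ it commutes with $\gamma$. The $24$ lines not passing through $P$ fall into the $12$ pairs $\{D,gD\}$, and $\pi$ identifies these pairs with the $12$ distinguished lines; moreover $D$ meets $R_i$ exactly when $\pi(D)$ passes through $T_i$. Because $T_1,T_2,T_3$ form a Galois orbit, $\gamma$ cyclically permutes $R_1,R_2,R_3$, so $3\mid\ord\gamma$, and the induced action of $\gamma$ on the distinguished lines is the given one. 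Thus the type is pinned down once we know the type of the order-$3$ part $\delta$ of $\gamma$ and the order of $\gamma$.

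First I would show that $\delta$ is of type $I$. It is not of type $II$: by Lemma~\ref{3nonmin} a type $II$ element is conjugate to $(123)\in\SG_6$, every length-$3$ orbit of which consists of pairwise disjoint lines, whereas $\delta$ cyclically permutes the mutually incident lines $R_1,R_2,R_3$. To exclude type $III$ under the triangle hypothesis I use that $\delta$ has order $3$ and cyclically permutes each triangle $\{\ell_1,\ell_2,\ell_3\}$; by the converse part of Lemma~\ref{triangle_lemma} the two lifts of a triangle are two hyperplane sections $\{D_1,D_2,D_3\}$ and $\{gD_1,gD_2,gD_3\}$. Since $\delta$ commutes with $g$ and has order $3$, it cannot interchange these two sections: if $\delta\{D_1,D_2,D_3\}=\{gD_1,gD_2,gD_3\}$, then $\delta^2\{D_1,D_2,D_3\}=\{D_1,D_2,D_3\}$ using $g^2=1$, whence $\delta^3\{D_1,D_2,D_3\}=\{gD_1,gD_2,gD_3\}\neq\{D_1,D_2,D_3\}$, contradicting $\delta^3=1$. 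Hence each section is a $\delta$-orbit, and together with $\{R_1,R_2,R_3\}$ this exhibits all $9$ orbits of $\delta$ on the $27$ lines as tritangent trios. As a type $III$ element has only $3$ such orbits, while a type $I$ element has all $9$ (on a cyclic cubic these are the nine Eckardt trios of Lemma~\ref{9Eckardt}, since every line meets the branch curve once and so lies over a single inflection point), $\delta$ must be of type $I$.

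It remains to compute $\ord\gamma$. If all distinguished lines are defined over $\F_{q^3}$, then $\gamma^3$ preserves every pair $\{D,gD\}$, so $\gamma^6$ fixes all $27$ lines and $\gamma^6=1$; with $3\mid\ord\gamma$ this gives $\ord\gamma\in\{3,6\}$, i.e.\ type $(c_{11})$ or $(c_{12})$ by Proposition~\ref{minclass}. If no distinguished line is defined over $\F_{q^3}$, the order cannot be $3$ or $9$ (either would force a distinguished line fixed by $\gamma^3$), nor $24$ (excluded in the proof of Proposition~\ref{minclass}). To rule out order $6$ I would argue that for such $\gamma$ the involution $\gamma^3$ fixes the incident triple $R_1,R_2,R_3$, so by Lemma~\ref{UniInv} (together with the fact, read off from Table~\ref{table1}, that the involution in an order-$6$ cyclic subgroup containing type $I$ is of type $c_3$) it coincides with $g$; but $g$ fixes every distinguished line, contradicting the hypothesis. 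Hence $\ord\gamma=12$. Once the order is $12$, the order-$4$ element $\gamma^3$ commutes with $\delta=\gamma^4$, so by Remark~\ref{typeIIImin_rem} $\delta$ is not of type $III$, and with the type $II$ exclusion above $\delta$ is of type $I$; by Proposition~\ref{minclass} the surface has type $(c_{13})$.

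The main obstacle is the bookkeeping of the double cover $X\to X/\langle g\rangle$: for each $\gamma$-orbit of distinguished lines one must decide whether a chosen lift returns to itself or is twisted by $g$. In the $(c_{11})/(c_{12})$ case the order-$3$ parity argument above settles this cleanly. The genuinely delicate point is the $(c_{13})$ case, where one must exclude order $6$; this amounts to identifying $\gamma^3$ with the Eckardt involution $g$, and I expect it to rest on the explicit list of conjugacy classes (Table~\ref{table1}), namely that the unique involution fixing three mutually incident lines is of type $c_3$.
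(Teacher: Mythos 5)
Your second case (triangle hypothesis $\Rightarrow$ $(c_{11})$ or $(c_{12})$) is correct, and it is essentially the paper's argument run in the contrapositive: the paper exhibits a disjoint-triple orbit of the type $III$ element and shows its image cannot be a triangle, while you show all orbits of $\delta$ are tritangent trios. The genuine gap is in the first case, at exactly the step you flag as delicate: ruling out order $6$. Your identification $\gamma^3=g$ via Lemma~\ref{UniInv} requires $\gamma^3$ to be of type $c_3$, and you justify this by the fact that the involution in an order-$6$ cyclic subgroup \emph{containing a type $I$ element} is of type $c_3$. But in the first case you have not shown that $\Gamma$ contains a type $I$ element: your exclusion of type $III$ for $\delta$ uses the triangle hypothesis, which is not available here. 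If $\delta$ has type $III$, then by Table~\ref{table1} the class of $\Gamma$ can be $c_{10}$, $c_{22}$ or $c_{23}$. For $c_{10}$ the cube is indeed of type $c_3$ and your argument goes through, but for $c_{22}$ the involution $\gamma^3$ has type $c_{16}$ and for $c_{23}$ it has type $c_{17}$; Lemma~\ref{UniInv} does not apply and $\gamma^3\neq g$. Your fallback principle, that the unique involution fixing three mutually incident lines is of type $c_3$, is false: the element $(12)(34)(56)\in\SG_6$ has type $c_{17}$ and its fixed lines are exactly the mutually incident triple $L_{12}$, $L_{34}$, $L_{56}$, and a type $c_{16}$ involution such as $(12)$ also fixes that triple.

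Closing the gap requires precisely the arguments the paper devotes to these two classes. For $c_{22}$: an involution of type $c_{16}$ fixes $15$ lines, hence fixes some line $D$ not passing through $P$, so the distinguished line $\pi(D)$ is defined over $\F_{q^3}$, contradicting your hypothesis. For $c_{23}$: the type $c_{17}$ involution $\gamma^3$ factors as a product of commuting involutions of types $c_3$ and $c_{16}$, where the $c_3$ factor fixes the same three lines $R_1,R_2,R_3$ (the only lines fixed by $\gamma^3$), hence equals $g$ by Lemma~\ref{UniInv}; therefore $g\gamma^3$ has type $c_{16}$ and fixes some line $D$ not through $P$, which means $\gamma^3 D=gD$, so $\gamma^3$ preserves the pair $\{D,gD\}$ and $\pi(D)$ is again defined over $\F_{q^3}$. (The paper packages this as a reduction of $c_{23}$ to $c_{22}$ by twisting with Proposition~\ref{dPtwist}.) With these two exclusions added, your proof of the first case closes; the remainder of your proposal, including the order bookkeeping and the use of Remark~\ref{typeIIImin_rem} and Proposition~\ref{minclass} for order $12$, is sound.
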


\begin{proof}

Let $X$ be the cubic surface constructed in Subsection~\ref{cubic_construction}. The lines $R_1$, $R_2$ and $R_3$ passing through $P$ are defined over $\F_{q^3}$ and not defined over $\F_q$. Therefore any element of order $3$ in $\Gamma$ has type $I$ or $III$ since an element of type $II$ does not have an orbit consisting of three meeting each other lines. Therefore the group $\Gamma$ of a cubic surface $X$ can have only types $c_9$, $c_{10}$, $c_{11}$, $c_{12}$, $c_{13}$, $c_{22}$ or $c_{23}$ (see Table \ref{table1}).

Note that if $\Gamma$ contains an element of even order whose type is not $c_3$ then this element nontrivially acts on the set of distinguished lines. 
Therefore not all these lines are defined over $\F_{q^3}$ for cases $c_{13}$, $c_{22}$ and $c_{23}$. 
Moreover, if $X$ has type $c_{13}$ then all distinguished lines are not defined over $\F_{q^3}$ and are defined over $\F_{q^{6}}$ by Lemma~\ref{UniInv}.

For cases $c_{22}$ and $c_{23}$ there exists a distinguished line defined over $\F_{q^3}$, since in case $c_{22}$ the element of order $2$ in $\Gamma$ has type $c_{16}$ (see Table \ref{table1})
 and has $9$ invariant lines on $X$. In case~$c_{23}$ the element of order $2$ in $\Gamma$ has type $c_{17}$, which is a product of elements of types~$c_3$ and $c_{16}$, 
and we can reduce this case to the previous by applying Proposition \ref{dPtwist}.

Assume that all distinguished lines are defined over $\F_{q^3}$. 
If $X$ has type $c_9$ or $c_{10}$, then~$\Gamma$ contains an element of type $III$; thus there is a triple of lines $D_1$, $D_2$ and $D_3$ on $X$, 
which is defined over $\F_{q^2}$, and $D_i \cdot D_j=0$, if $i\neq j$. In particular, they does not lie in a hyperplane section. 
Therefore for a such triple the corresponding points $P_1$, $P_2$ and $P_3$ on~$E$ do not lie on a line. 
It follows that $X$ has type $(c_{11})$ or $(c_{12})$.

\end{proof}

\section{Elliptic curves and cubic surfaces}\label{ex9}
\subsection{}
In this section we assume that $q$ is odd.
A curve $E$ of genus $1$ over a finite field has a rational point $O\in E(\F_q)$. 
Thus it is an elliptic curve, and there exists a group law $+_O$ on $E$ such that $O$ is the zero with respect to~$+_O$. 

Suppose that we are given an embedding of $E$ into a projective plane.
Let \mbox{$D=\sum_{P\in E(\overline{\F}_q)} a_P P$} be a hyperplane section of $E$, and let $Q=\sum_{O,P\in E(\overline{\F}_q)} a_P P\in E(\F_q)$ be the sum with respect to the group law $+_O$. 
Three points $P_1,P_2$, and $P_3$ lie on one line if and only if $P_1+_OP_2+_OP_3=Q$. In particular, a point $P\in E(\overline{\F}_q)$ is an inflection point if and only if $3P=Q$.

Fix a prime $\ell$ which is not equal to $\Char(\F_q)$. We denote by $E[\ell^n]$ the $\ell^n$-torsion group subscheme of $E$. Then $E[\ell^n](\F_q)\subset E(\overline{\F}_q)$ is the subgroup of points 
annihilated by $\ell^n$. The multiplication by $\ell$ induce a sequence of homomorphisms \mbox{$E[\ell^n](\F_q)\to E[\ell^{n-1}](\F_q)$}. 
The module $T_\ell E=\varprojlim_n E[\ell^n](\F_q)$ is called \emph{the Tate module of $E$} and is endowed with a natural 
linear action of $\Gal\left(\overline{\F}_q/\F_q\right)$. This is a free module of rank~$2$ over the ring of $\ell$-adic numbers~$\Z_\ell$.
 The Frobenius element $\sigma\in\Gal\left(\overline{\F}_q/\F_q\right)$ induce a semisimple linear endomorphism $\Fr:T_\ell E\to T_\ell E$. The characteristic polynomial $f_E(t)=\det(t-\Fr)$ is called \emph{the Weil polynomial of $E$}. The Weil polynomial of an elliptic curve is equal to $f(t)=t^2 - bt + q$ for some $b\in\Z$ such that $|b|\le 2\sqrt{q}$.
The Deuring theorem~\cite{Deu41} (which was reproved by Waterhouse using Honda-Tate theory~\cite{Wa}) gives a classification of possible Weil polynomials of elliptic curves. In what follows we need the following corollary of this theorem.

\begin{theorem}[Deuring]\label{Deuring}
Suppose $f(t)=t^2 - bt + q$, where $b\in\Z$ and $|b|\le 2\sqrt{q}$. If $(q,b)=1$, or if $\Char\F_q=3$ and $b=\pm\sqrt{3q}$, then there exists an elliptic curve  $E$ over $\F_q$ with the Weil polynomial $f$. 

\end{theorem}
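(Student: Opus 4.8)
The plan is to derive this from the Honda--Tate classification, of which, as the text already notes, the statement is the elliptic-curve special case (Waterhouse's list of admissible traces). Recall that Honda--Tate theory gives a bijection between isogeny classes of simple abelian varieties over $\F_q$ and Galois orbits of Weil $q$-numbers $\pi$, i.e. algebraic integers all of whose complex conjugates have absolute value $\sqrt{q}$. For the variety $A$ attached to $\pi$ the endomorphism algebra $D=\operatorname{End}^0(A)$ is a central division algebra over $E=\Q(\pi)$, one has
$$2\dim A = [E:\Q]\sqrt{[D:E]},$$
and the local invariants of $D$ are $0$ at the finite places away from $p$, equal to $1/2$ at each real place, and equal to $\frac{v(\pi)}{v(q)}[E_v:\Q_p]$ at each place $v\mid p$, while the complex places contribute $0$.

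First I would take $\pi$ to be a root of $f(t)=t^2-bt+q$. Since $|b|\le 2\sqrt q$, both roots have absolute value $\sqrt q$, so $\pi$ is a Weil $q$-number and Honda--Tate produces a simple $A/\F_q$ whose Frobenius has characteristic polynomial a power of $f$. Because in both cases $E=\Q(\pi)$ will be (at most) quadratic, to conclude that $A$ is an elliptic curve it suffices to show $\dim A=1$, i.e. that $D=E$, i.e. that \emph{all} local invariants of $D$ vanish. In the ordinary case $(q,b)=1$ one has $p\nmid b$, hence $b^2\neq 4q$ and so $b^2<4q$; thus $\pi$ is imaginary and $E=\Q(\sqrt{b^2-4q})$ is imaginary quadratic with no real place. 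Reducing $f$ modulo $p$ gives $t(t-b)$ with two distinct roots, so $p$ splits in $E$, and since $\pi+\bar\pi=b$ is a $p$-adic unit the valuations of $\pi$ at the two places over $p$ are $0$ and $a$; both invariants are $\equiv 0$. Hence $D=E$ and $\dim A=1$.

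In the case $\Char\F_q=3$, $b=\pm\sqrt{3q}$, one has $q=3^a$ with $a$ odd and $b=\pm 3^{(a+1)/2}$, so $b^2-4q=-q$ and $E=\Q(\sqrt{-q})=\Q(\sqrt{-3})$, again imaginary quadratic with no real place, in which $3$ ramifies. At the unique place $v$ over $3$ one gets $v(\pi)=a$ and $v(q)=2a$ with $[E_v:\Q_3]=2$, so the invariant equals $\frac{a}{2a}\cdot 2\equiv 0$. Therefore $D=E$, $\dim A=1$, and the resulting elliptic curve is supersingular since $3\mid b$.

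The main obstacle is precisely the invariant computation at the places over $p$: one must determine the splitting type of $p$ in $E=\Q(\pi)$ and the $p$-adic valuation of $\pi$, and it is exactly here that the two hypotheses $(q,b)=1$ and $\Char\F_q=3,\ b=\pm\sqrt{3q}$ are what force every local invariant to vanish. (Contrast this with, say, $b=0$ over $\F_{p^2}$ for $p\equiv 1\pmod 4$, where the two invariants over $p$ equal $1/2$, forcing $D$ to be a quaternion algebra and $\dim A=2$.) Everything else --- that $\pi$ is a Weil number and that Honda--Tate applies --- is formal; alternatively one could invoke Deuring's original complex-multiplication-and-reduction argument, but the Honda--Tate route makes the role of the two hypotheses most transparent.
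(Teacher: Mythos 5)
Your proof is correct: both local-invariant computations (in the ordinary case $p$ splits in $E$ and the two valuations of $\pi$ are $0$ and $a$, giving invariants $\equiv 0$; in the case $\Char\F_q=3$, $b=\pm\sqrt{3q}$ the unique ramified place over $3$ in $\Q(\sqrt{-3})$ gives invariant $\tfrac{1}{2}\cdot 2\equiv 0$), so indeed $D=E$, $\dim A=1$, and the characteristic polynomial of Frobenius is $f$ itself. The paper does not prove this theorem at all --- it cites Deuring's original paper and Waterhouse's Honda--Tate reproof --- and your write-up is exactly that Honda--Tate argument, i.e.\ the approach the paper explicitly points to.
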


Let $E$ be an elliptic curve with the Weil polynomial $f$. Then the Weil polynomial of $E\otimes \F_{q^3}$ is equal to $f_3(t)=t^2-(b^3-3qb)t+q^3$.
The order of the group $E(\F_q)$ of $\F_q$-points is equal to $f_E(1)=q-b+1$. 

\subsection{Case $(c_{13})$}
\begin{proposition}\label{case13}
Suppose that there exists an elliptic curve $E$ such that $E[4](\F_{q^2})=0$, and $E[4](\F_{q^3})=(\Z/2\Z)^2$, and a point $P\in E[4](\F_{q^6})$ of order $4$. Then there exists a cubic surface of type $(c_{13})$.
\end{proposition}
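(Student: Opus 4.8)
The plan is to produce, from the given curve $E$, exactly the geometric configuration required by Proposition~\ref{cases_c11_c13}, and then to read off the type from that proposition. First I would put $E$ in Weierstrass form, which is possible because $q$ is odd, and embed it in $\Pro^2_{\F_q}$ in the usual way, so that the point at infinity $O$ is an inflection point and three points of $E$ are collinear if and only if they sum to $O$ in the group law $+_O$. Take $W$ to be the line $\{y=0\}$. It meets $E$ transversally in the three nontrivial $2$-torsion points $T_1$, $T_2$, $T_3$ (the affine roots of the cubic defining $E$ lie on $y=0$), and these are distinct since $E$ is smooth. Feeding $E$ and $W$ into the construction of Subsection~\ref{cubic_construction} then yields a smooth cubic surface $X$ with an Eckardt point defined over $\F_q$.

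Next I would check that $T_1,T_2,T_3$ form a single Galois orbit. The hypothesis $E[4](\F_{q^2})=0$ gives $E[2](\F_{q^2})=0$, so none of the $T_i$ is defined over $\F_{q^2}$, and a fortiori none is defined over $\F_q$. On the other hand $E[4](\F_{q^3})=(\Z/2\Z)^2$ is precisely the full $2$-torsion, so all three $T_i$ are defined over $\F_{q^3}$. Hence $\Gal(\F_{q^3}/\F_q)\cong\Z/3\Z$ permutes $\{T_1,T_2,T_3\}$ without fixed points, and therefore cyclically, which is the required Galois orbit.

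The heart of the argument is the dictionary between distinguished lines and $4$-torsion points. A distinguished line $\ell$ passes through some $T_i$ and is tangent to $E$ at a point $P(\ell)$; since the tangent line at a point $P$ meets $E$ again at $-2P$, the condition $T(\ell)=T_i$ reads $-2P(\ell)=T_i$, that is $2P(\ell)=T_i$ because $T_i$ is $2$-torsion. Thus the distinguished lines correspond bijectively to the twelve points of exact order $4$ on $E$, the line through $T_i$ being the tangent at a point $P$ with $2P=T_i$; this also recovers the count of four distinguished lines for each $T_i$ (the solutions of $2P=T_i$ form a coset of $E[2]$). Because a line tangent to $E$ determines its point of tangency, applying the Frobenius $\sigma^k$ to $\ell$ produces the distinguished line tangent at $\sigma^k(P(\ell))$, so $\ell$ is defined over $\F_{q^k}$ if and only if $P(\ell)\in E(\F_{q^k})$. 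Since $E[4](\F_{q^3})=(\Z/2\Z)^2$ contains no point of order $4$, no distinguished line is defined over $\F_{q^3}$; the hypothesis that $E[4](\F_{q^6})$ contains a point of order $4$ shows moreover that the distinguished lines become defined over $\F_{q^6}$, as must happen for type $(c_{13})$ by Lemma~\ref{UniInv}. Proposition~\ref{cases_c11_c13} now applies and gives that $X$ has type $(c_{13})$.

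The step I expect to require the most care is this last one: correctly matching the distinguished lines with the $4$-torsion points and tracking the field of definition through the Frobenius action, in particular establishing that $\ell$ and its tangency point $P(\ell)$ share the same field of definition. The remaining verifications are bookkeeping with the $2$- and $4$-torsion hypotheses, which are arranged precisely so that the two conditions of Proposition~\ref{cases_c11_c13}, namely that $T_1,T_2,T_3$ form a Galois orbit and that all distinguished lines fail to be defined over $\F_{q^3}$, are both satisfied.
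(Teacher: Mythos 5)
Your proof is correct and takes essentially the same approach as the paper: you use the Weierstrass embedding (the paper's embedding by the divisor $3O$), take $W$ to be the line through the three nontrivial $2$-torsion points, identify the distinguished lines with the points of exact order $4$ (none of which lies in $E(\F_{q^3})$ since $E[4](\F_{q^3})=(\Z/2\Z)^2$), and conclude via Proposition~\ref{cases_c11_c13}. The paper's own proof is a terser version of exactly this argument, with your bijection between distinguished lines and order-$4$ points and the field-of-definition bookkeeping left implicit.
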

\begin{proof}
Let $Q_1,Q_2,Q_3\in E[2](\F_{q^3})$ be non-zero points. Clearly, $Q_1+_OQ_2+_OQ_3=O$.
The degree of $P$ is $6$, because $E[4](\F_{q^2})=0$, and $E[4](\F_{q^3})=(\Z/2\Z)^2$.
We embed $E$ to $\Pro^2$ by the divisor $D=3O$, and take a line through $Q_1$ tangent to $E$ at $P$.
We can apply Proposition~\ref{cases_c11_c13}. 
\end{proof}

\begin{example}
By the Deuring theorem, the polynomial $f(t)=t^2-t+q$ is a Weil polynomial of an elliptic curve such that $E[2](\F_q)=0$, and $E[2](\F_{q^3})=(\Z/2\Z)^2$ 
since $f(1)=q$, and $f_3(1)=q(q^2+3)\equiv 4\bmod 8$. Moreover, the number of points on $E\otimes \F_{q^2}$ is equal to $q(q+2)$; thus $E[2](\F_{q^2})=0$. 
Finally, there are at least $8$ points on $E\otimes \F_{q^6}$; therefore there exists a point $Q\in E[4](\F_{q^6})$ of order $4$. By Proposition~\ref{case13}, there exists a cubic surface of type $(c_{13})$.
\end{example}

\subsection{Cases $(c_{11})$ and $(c_{12})$}
\begin{proposition}\label{case11}
Suppose that there exist a prime $\ell>2$ and an elliptic curve $E$ such that $E[2\ell](\F_q)=0$, and $E[2](\F_{q^3})=(\Z/2\Z)^2$, and a point $Q\in E[\ell](\F_{q^3})$. Then there exist cubic surfaces of types $(c_{11})$ and $(c_{12})$.
\end{proposition}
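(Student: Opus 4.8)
The plan is to realize the hypotheses of Proposition~\ref{cases_c11_c13} by taking the three points $T_1,T_2,T_3$ to be the Galois orbit of the $\ell$-torsion point $Q$, and then to invoke Proposition~\ref{twist-to-use} to pass between types $(c_{11})$ and $(c_{12})$. First I would note that, since $E[2\ell](\F_q)=0$, the point $Q$ has exact order $\ell$ and does not lie in $E(\F_q)$; as $Q\in E(\F_{q^3})$, its orbit under $\Gal(\F_{q^3}/\F_q)\cong\Z/3\Z$ has length $3$. Writing $\sigma$ for the Frobenius generator, I set $T_1=Q$, $T_2=\sigma Q$, $T_3=\sigma^2 Q$, and embed $E$ into $\Pro^2_{\F_q}$ by the linear system $|3O|$, so that $O$ is an inflection point and three points of $E$ are collinear exactly when their sum in the group law is $O$. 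Since $T_1+_OT_2+_OT_3$ is the Galois trace of the $\ell$-torsion point $Q$, it lies in $E[\ell](\F_q)=0$, hence equals $O$; thus $T_1,T_2,T_3$ are collinear, cut out on $E$ by a line $W$ defined over $\F_q$.

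With this data I would run the construction of Subsection~\ref{cubic_construction} to produce a cubic surface $X$ over $\F_q$ with an Eckardt point defined over $\F_q$, and then control its distinguished lines. A line through $T_i$ tangent to $E$ at a point $P$ meets $E$ in the divisor $T_i+2P$, which sums to $O$; hence the distinguished lines through $T_i$ correspond to the solutions of $2P=-T_i$, that is, to $P\in P_i^0+E[2]$ for any fixed solution $P_i^0$, which one may take to be $\frac{\ell-1}{2}T_i$. Since $Q\in E(\F_{q^3})$ and, by hypothesis, $E[2]=E[2](\F_{q^3})$, every such $P$ is defined over $\F_{q^3}$; therefore \emph{all} distinguished lines are defined over $\F_{q^3}$.

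It remains to show that the twelve distinguished lines split into four Galois orbits, each a triangle. As $\sigma$ cyclically permutes $T_1,T_2,T_3$, it sends a distinguished line over $T_i$ to one over $T_{i+1}$, so no distinguished line is fixed and there are exactly four orbits of length $3$. For an orbit with tangent points $P,\sigma P,\sigma^2 P$ to be a triangle one needs $P+_O\sigma P+_O\sigma^2 P=O$, i.e. the trace of $P$ to vanish. Writing a tangent point over $T_1$ as $P_1^0+\varepsilon$ with $\varepsilon\in E[2]$, the trace of $P_1^0=\frac{\ell-1}{2}T_1$ equals $\frac{\ell-1}{2}\left(T_1+_OT_2+_OT_3\right)=O$, while the trace of $\varepsilon$ is a $2$-torsion point of $E(\F_q)$ and hence $O$, because $E[2](\F_q)=0$. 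Thus each orbit is a triangle, so Proposition~\ref{cases_c11_c13} shows that $X$ has type $(c_{11})$ or $(c_{12})$; since $X$ carries an Eckardt point over $\F_q$ and has one of these two types, Proposition~\ref{twist-to-use}(1) then yields cubic surfaces of \emph{both} types over $\F_q$.

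The step I expect to be most delicate is the descent of every distinguished line to $\F_{q^3}$: this is precisely where the two remaining hypotheses enter, the rationality of $Q$ over $\F_{q^3}$ fixing the part coming from $\frac{\ell-1}{2}T_i$ and the full rationality of $E[2]$ over $\F_{q^3}$ fixing the $2$-torsion translates, so that the configuration avoids the degeneration which produced type $(c_{13})$ in Proposition~\ref{case13}. A minor point to verify separately is the case $\ell=3$, where $T_i$ is an inflection point and one of the four solutions of $2P=-T_i$ is $T_i$ itself; here one checks that the corresponding inflectional tangent still behaves as a distinguished line and that its orbit, whose tangent points are the collinear points $T_1,T_2,T_3$, is again a triangle, so the conclusion is unchanged.
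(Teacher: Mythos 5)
Your proof is correct and takes essentially the same route as the paper: the Galois orbit of $Q$ serves as the collinear triple $T_1,T_2,T_3$ in the embedding by $|3O|$, the hypothesis $E[2]=E[2](\F_{q^3})$ gives that all distinguished lines are defined over $\F_{q^3}$, vanishing of Frobenius traces (your substitute for the paper's equivalent argument via injectivity of $F-1$ on $E[\ell](\F_{q^3})$) shows the four orbits are triangles, and Proposition~\ref{cases_c11_c13} finishes. The only differences are presentational: you make explicit the final appeal to Proposition~\ref{twist-to-use}(1) to obtain \emph{both} types and the degenerate case $\ell=3$ (where one distinguished line is an inflectional tangent), both of which the paper's proof handles implicitly.
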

\begin{proof}
The point $Q$ generates a cyclic group of order $\ell$, and in this group there exists a point $P_1$ such that $2P_1+Q=0$. 
Embed the curve $E$ by the divisor $D=3O$. We know that there exists a line $L_1$ passing through $Q$ and tangent to $E$ at $P_1$.
Put $Q_1=Q$, $Q_2=F(Q)$, $Q_3=F^2(Q)$, $P_2=F(P_1)$, and $P_3=F(P_2)$. The sets $\{Q_1,Q_2,Q_3\}$ and $\{P_1,P_2,P_3\}$ are Galois orbits.
By assumption, $F-1$ acts bijectively on $E[\ell](\F_{q^3})$. Since $(F^3-1)(P_1)=0$, we also have $P_1+F(P_1)+F^2(P_1)=0$ in $E[\ell](\F_{q^3})$; this proves that the points $P_1$, $P_2$, and $P_3$ lie on a line. 

Let $L'_1$ be a line passing through $Q_1$ and tangent to $E$ at a point $P'_1$. Then \mbox{$S=P'_1-P_1\in E[2](\F_{q^3})$}. Denote by $\{P'_1,P'_2,P'_3\}$ the corresponding Galois orbit. Since $E[2](\F_q)=0$, we have $S+F(S)+F^2(S)=0$, and thus the points $P'_1$, $P'_2$, and $P'_3$ lie on a line as well. Now use Proposition~\ref{cases_c11_c13}. 
\end{proof}

\begin{example}
If $f(t)=t^2-bt+q$ is a Weil polynomial such that $b$ is odd, then \mbox{$E[2](\F_q)=0$}, and $E[2](\F_{q^3})=(\Z/2\Z)^2$. Let $b=1$.
Note that $q^2+3\equiv 4\bmod 8$. In particular, $q^2+3$ is divisible by an odd prime. If $q>3$, then there exists $\ell>2$ which does not divide $f(1)=q$, but divides $f_3(1)=q(q^2+3)$. 
Finally, if $q=3$, and $b=3$, then $f(1)=1$, and $f_3(1)=28$; in this case, we take $\ell=7$ . 
By the Deuring theorem there exists an elliptic curve $E$ with Weil polynomial $f$, and, by Proposition~\ref{case11}, there exist cubic surfaces of types $(c_{11})$ and $(c_{12})$.
\end{example}

\subsection{Case $(c_{14})$}
We are going to construct an elliptic curve and a divisor $D$ such that there exist three inflection points $P_1,P_2$, and $P_3$ with the following properties:
\begin{enumerate}
	\item $P_1+_OP_2+_OP_3\neq Q$, i.e., the points do not lie on a line;
		\item the set $\{P_1,P_2,P_3\}$ is an orbit under the action of the Galois group $\Gal(\F_{q^3}/\F_q)$.
\end{enumerate}
Then, by Lemma \ref{inf9}, there exists a minimal cyclic cubic surface of type $(c_{14})$.

Assume that $q\equiv 1\bmod 3$. Take $b\in\Z$ such that $1-b+q\equiv 6\bmod 9$. 
Since $q>3$, there exists $b$ with an additional property that $(b,q)=1$, and $|b|\leqslant 2\sqrt{q}$. By Theorem~\ref{Deuring} there exists an elliptic curve $E$ 
with the Weil polynomial $f(t)=t^2-bt +q$. It follows that the $3$-torsion subgroup of $E(\F_q)$ is isomorphic to $\Z/3\Z$.
Moreover, $f(t)\equiv (t-1)^2\bmod 3$, thus Frobenius acts by a matrix 
$$\left (\begin{array}{c c}
1 & 0\\
1 & 1\\	
\end{array}\right)$$ in some basis.

From the relation $b\equiv q-5\bmod 9$ it follows that $f_3(t)=1-(b^3-3qb)+q^3$ is divisible by $27$. In other words the order of the group $E[9](\F_{q^3})$ is at least $27$, and there exists
a point $P\in E[9](\F_{q^3})$ of order $9$. 

We claim that $F(P)=P+Q'$ for some $Q'\in E[3](\F_{q^3})$. Indeed, if $F(P)=2P+Q'$, then $F^3(P)=2P+Q''$ for some $Q''\in E[3](\F_{q^3})$. We obtain a contradiction to the assumption that
 $P$ is defined over $\F_{q^3}$.
We may assume that $F(Q')=Q'+Q$, where $Q\in E[3](\F_q)$ is non-zero. If it is not the case, then pick a point $Q''\in E[3](\F_{q^3})$ such that $F(Q'')\neq Q''$, and take a point $P+Q''$ instead of $P$.
Finally, we claim that $3P=Q$. This can be easily deduced from the equation $F^2(P)-bF(P)+qP=0$. 

The divisor $D=2\cdot O+Q$ provides an embedding of $E$ into a projective plane. 
The orbit of $P$ under the Frobenius action is $\{P, P+_OQ', P+_O2Q'+_OQ\}$, and, since
$$
P+_OP+_OQ'+_OP+_O2Q'+_OQ=3P+_OQ=2Q\neq Q,
$$
\noindent these three points do not lie on a line. 

\section{Case $(c_{10})$}

In this section for $q > 2$ we construct cubic surfaces of type $(c_{10})$ and show that for $q = 2$ there are no cubic surfaces of type $(c_{10})$.

\begin{proposition}
\label{c10fq}
Let $\F_q$ be a finite field such that $q > 2$. Then there exists a cubic surface $X$ of type $(c_{10})$.
\end{proposition}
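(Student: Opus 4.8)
The plan is to invoke the second part of Proposition~\ref{twist-to-use}, which reduces the construction of a type~$(c_{10})$ cubic surface with an Eckardt point to the construction of a cubic surface $X$ with an Eckardt point defined over $\F_q$ whose Galois image $\Gamma$ is generated by an element of type~$III$. Since type~$III$ is conjugate to $(123)(456)$ in $\SG_6$ by Lemma~\ref{3nonmin}, such a surface has order-$3$ Galois action with no invariant $(-1)$-curve, and an Eckardt point forces the order-$2$ involution of type $c_3$ to combine with it and produce the type~$(c_{10})$ twist. So the real work is to produce, for every $q>2$, a cubic surface with an Eckardt point over $\F_q$ on which Frobenius acts by an element of type~$III$.

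For this I would use the geometric construction of Subsection~\ref{cubic_construction} together with Proposition~\ref{cases_c11_c13}. Recall that a type~$III$ element has an orbit of three pairwise disjoint lines defined over $\F_{q^2}$, so I must arrange an elliptic curve $E\subset\Pro^2_{\F_q}$ and a line $W$ meeting $E$ in a Galois orbit $\{T_1,T_2,T_3\}$ of degree $3$, such that all distinguished lines are defined over $\F_{q^3}$ but the associated tangency points $P_1,P_2,P_3$ on $E$ do \emph{not} lie on a line — exactly the configuration that Proposition~\ref{cases_c11_c13} attributes to type $(c_{11})$ or $(c_{12})$, and which after the twist of Proposition~\ref{twist-to-use} descends to type~$(c_{10})$. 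Concretely, I would look for two Galois orbits of degree $3$ on $\Pro^2_{\F_q}$ — the three blown-up points $T_1,T_2,T_3$ (cut out by $W$) and a second triple arising from the distinguished lines — and check, via the group law on $E$, that the relevant sum $P_1+_OP_2+_OP_3$ is different from the line class $Q$. This is precisely the dichotomy already isolated in the proof of Proposition~\ref{cases_c11_c13}: once all distinguished lines are defined over $\F_{q^3}$ and the three points on $E$ fail to be collinear, the surface is of type $(c_{11})$ or $(c_{12})$, i.e.\ $\Gamma$ contains type~$III$, which is all I need.

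The arithmetic input, as in the previous subsections, comes from the Deuring theorem (Theorem~\ref{Deuring}). I would pick a Weil polynomial $f(t)=t^2-bt+q$ realizing an elliptic curve $E$ over $\F_q$ with the right torsion behaviour: $\F_q$-rational points $T_i$ arranged into a single degree-$3$ orbit on the line $W$, and a second degree-$3$ orbit of tangency points that is genuinely non-collinear over $\F_{q^3}$. The condition $(b,q)=1$ with $|b|\le 2\sqrt q$ guarantees existence of $E$, and for $q>2$ there is enough freedom in choosing $b$ (and hence $\#E(\F_q)=q-b+1$ and $\#E(\F_{q^3})$) to force both orbit structures and the non-collinearity; the point count on $E\otimes\F_{q^3}$ must be divisible by $3$ to supply the order-$3$ torsion that produces the $\F_{q^3}$-rational but not collinear triple. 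I expect the main obstacle to be the case analysis needed to verify that a suitable $b$ exists for \emph{every} $q>2$, in particular handling the smallest fields $q=3,4$ where the available residues $b\bmod 3$ and the bound $|b|\le 2\sqrt q$ leave little room; one likely has to treat $q=3$ and $q=4$ by an explicit choice of curve and line, and the nonexistence assertion for $q=2$ (which is stated separately in Theorem~\ref{MAIN} and will be proved after this proposition) confirms that no uniform construction can cover $q=2$.
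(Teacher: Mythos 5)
Your opening move coincides with the paper's: both proofs invoke Proposition~\ref{twist-to-use}(2) to reduce to constructing a cubic surface with an $\F_q$-rational Eckardt point whose Galois image $\Gamma$ is generated by an element of type $III$. After that, however, your argument has a genuine logical error. Proposition~\ref{cases_c11_c13} attributes types $(c_{11})$/$(c_{12})$ to the configuration in which the Galois orbits of distinguished lines \emph{are} triangles, i.e.\ the tangency points $P(\ell_1)$, $P(\ell_2)$, $P(\ell_3)$ \emph{do} lie on a line; and the groups of types $(c_{11})$ and $(c_{12})$ contain an element of type $I$, not of type $III$ (see Proposition~\ref{minclass}). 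So your sentence ``the surface is of type $(c_{11})$ or $(c_{12})$, i.e.\ $\Gamma$ contains type $III$'' is wrong on both counts. What your plan actually requires --- that all distinguished lines being defined over $\F_{q^3}$ together with the existence of a non-triangle orbit forces type $c_9$ or $c_{10}$ --- is not the statement of Proposition~\ref{cases_c11_c13}; it follows only from a converse implication (every line orbit of a type $I$ element is coplanar) which is implicit in that proposition's proof and which you would have to isolate and prove.

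The second, more serious, gap is that the construction itself is missing. The entire content of the proposition is to exhibit, for \emph{every} $q>2$, the required geometric data, and you leave exactly this step as ``there is enough freedom in choosing $b$'': no Weil polynomial is specified, and the torsion and non-collinearity conditions are never verified --- this is precisely the kind of work that Propositions~\ref{case13} and~\ref{case11} and the case $(c_{14})$ in Section 5 carry out for the other types. Moreover, your route runs through the elliptic-curve and distinguished-line machinery of Sections 4--5, which the paper sets up under the assumption that $q$ is odd (and the double-cover construction of Subsection~\ref{cubic_construction} is delicate in characteristic $2$), so even if completed it would not obviously cover $q=4,8,16,\dots$, whereas the proposition claims all $q>2$. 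The paper's actual proof avoids all of this and is more elementary: it builds the type-$III$ surface directly as the blowup of $\Pro^2_{\F_q}$ at two $\Gal\left(\F_{q^3}/\F_q\right)$-conjugate triples of points in general position whose three connecting lines are concurrent at an $\F_q$-point --- for $q>3$ taking $p_1=\left(a^2:a:1\right)$, $p_4=\left(ka^2:ka:1\right)$ and their conjugates with $a$ chosen so that $a^{2q+1}-a^{q+2}\notin\F_q$, and for $q=3$ an explicit choice with $a^3=a+1$ --- then checking general position by hand; no elliptic-curve input is needed.
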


\begin{proof}

By Proposition \ref{twist-to-use}(2) it is sufficient to construct a cubic surface $X$ with an Eckardt point defined over $\F_q$ such that the generator $\gamma$ of the group $\Gamma$ has type $III$. This cubic surface is not minimal and it is isomorphic to the blowup of two triples of \mbox{$\Gal\left(\F_{q^3} / \F_q\right)$-conjugate} points $p_1$, $p_2$, $p_3$ and $p_4$, $p_5$, $p_6$ in general position on $\Pro^2_{\F_q}$. Moreover, three conjugate lines passing through $p_1$ and $p_4$, $p_2$ and $p_5$, $p_3$ and $p_6$ respectively must have a common $\F_q$-point,
 that is the image of the Eckardt point.

Assume that $q > 3$. Choose $k \in \F_q \setminus \{0,1,-1\}$. We claim that there exists $a \in \F_{q^3} \setminus \F_q$ such that six points
$$
p_1 = \left(a^2 : a : 1\right), \qquad p_2 = \left(a^{2q} : a^q : 1\right), \qquad p_3 = \left(a^{2q^2} : a^{q^2} : 1\right),
$$
$$
p_4 = \left(ka^2 : ka : 1\right), \qquad p_5 = \left(ka^{2q} : ka^q : 1\right), \qquad p_6 = \left(ka^{2q^2} : ka^{q^2} : 1\right)
$$
 are in general position.
 
If a conic passes through these six points, then it is defined over $\F_q$.
Assume that the points $p_1$ and $p_4$ lie on a conic  
$$
Ax^2 + Bxy + Cy^2 + Dxz + Eyz + Fz^2 = 0
$$ defined over $\F_q$.

 Then we have
$$
Aa^4 + Ba^3 + Ca^2 + Da^2 + Ea + F = 0, \quad Ak^2a^4 + Bk^2a^3 + Ck^2a^2 + Dka^2 + Eka + F = 0;
$$
$$
D(k^2-k)a^2 + E(k^2-k)a + F(k^2-1) = 0. 
$$
But the last equation holds if and only if $D(k^2-k) = E(k^2 - k) = F(k^2 - 1) = 0$. It means that $D = E = F = 0$ since $k \ne \pm 1$. Therefore $Aa^4 + Ba^3 + Ca^2 = 0$. But it holds only if $A= B = C = 0$. Thus the points $p_1$, $p_2$, $p_3$, $p_4$, $p_5$, $p_6$ do not lie on a conic.

Three lines passing through $p_1$ and $p_4$, $p_2$ and $p_5$, $p_3$ and $p_6$ respectively have a common point $(0:0:1)$.  There are~$q$ equations $x^{2q+1} - x^{q+2} = s$, where $s \in \F_q$. These equations have no more than $2q^2 + q$ roots in $\overline{\F}_q$. But $q^3 > 2q^2 + q$. Take $a\in \F_{q^3}$ such that $a^{2q+1} - a^{q+2}$ is not an element of $\F_q$. 
Assume that a line $l$ passes through $p_1$, $p_2$ and any other point $p_i$. If $l$ passes through $p_3$ then $l$ is defined over $\F_q$. But there are no lines defined over $\F_q$ passing through~$p_1$. If $l$ passes through $p_4$ or $p_5$, then $l$ is given by $x = ay$ or $x = a^qy$ respectively. But the points $p_2$ and $p_1$ respectively do not lie on the corresponding lines. The only remaining case is that $l$ passes through $p_1$, $p_2$ and $p_6$. In this case we have
$$
0 = \operatorname{det} \left(
\begin{array}{ccc} a^2&a&1\\a^{2q}&a^q&1\\ka^{2q^2}&ka^{q^2}&1\\
\end{array}
\right)
=
k \cdot\operatorname{det} \left(
\begin{array}{ccc} a^2&a&1\\a^{2q}&a^q&1\\a^{2q^2}&a^{q^2}&1\\
\end{array}
\right)
+
k \cdot\operatorname{det} \left(
\begin{array}{ccc} a^2&a&1\\a^{2q}&a^q&1\\0&0&k^{-1}-1\\
\end{array}
\right)
=
$$
$$
=
kD + (1 - k)(a^{q+2} - a^{2q+1}),
$$
\noindent where $k$, $D$ are elements of $\F_q$. Therefore $a^{2q+1} - a^{q+2}$ must be an element of $\F_q$, that contradicts the assumption $\left(a^{2q+1} - a^{q+2}\right) \notin \F_q$. Thus the line passing through $p_1$ and $p_2$ can not pass through any other point $p_i$. The points $p_1$, $p_2$ and $p_3$ are conjugate, therefore a line passing through any two points from this set does not contain any other point $p_i$. In the same way one can show, that any line passing through two points from the set $\{p_4, p_5, p_6\}$ does not contain any other point $p_i$. Thus the points $p_1$, $p_2$, $p_3$, $p_4$, $p_5$, $p_6$ are in general position.

Now assume that $q = 3$ and $a$ is an element of $\F_{q^3}$ such that $a^3 = a + 1$. Let us consider six points
$$
p_1 = \left( a^2 : a : 1 \right), \qquad p_2 = \left( a^6 : a^3 : 1 \right), \qquad p_3 = \left( a^{18} : a^9 : 1 \right),
$$
$$
p_4 = \left( a^4 : a : 1 \right), \qquad p_5 = \left( a^{12} : a^3 : 1 \right), \qquad p_6 = \left( a^{10} : a^9 : 1 \right).
$$

Three lines passing through $p_1$ and $p_4$, $p_2$ and $p_5$, $p_3$ and $p_6$ respectively have a common point $(1:0:0)$. Let us show that the points $p_1$, $p_2$, $p_3$, $p_4$, $p_5$, $p_6$ are in general position.

If a conic pass through these six points then it is defined over $\F_3$. Let us consider a conic given by the equation
$$
Ax^2 + Bxy + Cy^2 + Dxz + Eyz + Fz^2 = 0.
$$
Assume that the points $p_1$ and $p_4$ lie on this conic. Then
$$
Aa^4 + Ba^3 + Ca^2 + Da^2 + Ea + F = 0, \qquad Aa^8 + Ba^5 + Ca^2 + Da^4 + Ea + F = 0.
$$
Since $a^3 = a + 1$ we have
$$
(A + C + D)a^2 + (A + B + E)a + (B + F) = 0, \quad (B + C + D - A)a^2 + (B + D + E)a + (B + F - A) = 0.
$$
One can check that these equations holds only if $A = B = C = D = E = F = 0$. Thus the points $p_1$, $p_2$, $p_3$, $p_4$, $p_5$, $p_6$ do not lie on a conic.

If a line passes through the three points $p_1$, $p_2$, $p_3$ or the points $p_4$, $p_5$, $p_6$, then this line is defined over $\F_3$. Let us consider a line $l$ defined over $\F_3$ given by the equation $Ax + By + Cz = 0$. If this line passes through $p_1$ then $Aa^2 + Ba + C = 0$ that is impossible. If $l$ passes through $p_4$ then $Aa^4 + Ba + C = 0$. Thus $Aa^2 + (A+B)a + C = 0$ that is impossible.

Note that each of the lines $y = az$, $y = a^3z$ and $y = a^9z$ contains exactly two points from the set $p_1$, $p_2$, $p_3$, $p_4$, $p_5$, $p_6$. 
Thus it is sufficient to consider triples $p_1$, $p_2$, $p_6$ and $p_4$, $p_5$, $p_3$.

For the points $p_1$, $p_2$, $p_6$ we have
$$
\operatorname{det} \left(
\begin{array}{ccc} a^2&a&1\\a^{6}&a^3&1\\a^{10}&a^9&1\\
\end{array}
\right)
= a^{15} + a^{11} + a^5 - a^{13} - a^{11} - a^7 = a^5(a^2 - 1)(a^8 - 1) = 
$$
$$
= a^5(a^2 - 1)^2(a^2 + 1)(a^4 + 1) = a^5(a^2 - 1)^2(a^2 + 1)(a^2 + a + 1) \ne 0.
$$

For the points $p_4$, $p_5$, $p_3$ we have
$$
\operatorname{det} \left(
\begin{array}{ccc} a^4&a&1\\a^{12}&a^3&1\\a^{18}&a^9&1\\
\end{array}
\right)
= a^{21} + a^{19} + a^{7} - a^{21} - a^{13} - a^{13} = a^{19} - 2a^{13} + a^{7} = 
$$
$$
= a^7(a^6 - 1)^2 = a^7(a^3 - 1)^2(a^3 + 1)^2 = a^9(a - 1)^2 \ne 0.
$$

Thus the points $p_1$, $p_2$, $p_3$, $p_4$, $p_5$, $p_6$ are in general position.

\end{proof}

From now assume that $q = 2$.

\begin{lemma}
\label{c10f2eck}
If a cubic surface $X$ of type $(c_{10})$ is defined over $\F_2$, then it does not contain Eckardt points defined over $\F_2$.
\end{lemma}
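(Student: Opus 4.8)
The plan is to apply Proposition~\ref{twist-to-use}(2), which reduces the claim to the non-existence of a cubic surface over $\F_2$ that carries an $\F_2$-Eckardt point and whose group $\Gamma$ is generated by an element of type $III$. So I will assume such a surface $X$ exists and derive a contradiction. As in the proof of Proposition~\ref{c10fq}, $X$ is the blowup of two $\Gal(\F_8/\F_2)$-conjugate triples of points $\{p_1,p_2,p_3\}$ and $\{p_4,p_5,p_6\}$ of $\Pro^2_{\F_2}$ in general position. Since $(123)(456)$ fixes no $(-1)$-curve, the three lines through an $\F_2$-Eckardt point form a single Galois orbit; the only $\Gamma$-stable triples of coplanar lines are the three cross-matchings between the two triples, so after relabeling the Eckardt point is the common point $R$ of the conjugate lines $\overline{p_1p_4}$, $\overline{p_2p_5}$, $\overline{p_3p_6}$. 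As these lines are permuted cyclically, $R$ is $\F_2$-rational, and $R\notin\{p_1,\dots,p_6\}$. This brings everything onto $\Pro^2_{\F_2}$ and avoids the double-cover model, which is unavailable in characteristic $2$.

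Next I would normalize coordinates. Put $R=(0:0:1)$; the three conjugate lines through $R$ form one of the two degree-$3$ orbits in the pencil of lines through $R$, and these two orbits are interchanged by an element of $\mathrm{PGL}_3(\F_2)$ fixing $R$ (on the pencil $\cong\Pro^1_{\F_2}$ this is the involution swapping the two irreducible cubics over $\F_2$). I may therefore assume the orbit is $\{x=\alpha y,\ x=\alpha^2 y,\ x=\alpha^4 y\}$, where $\alpha\in\F_8$ satisfies $\alpha^3=\alpha+1$. The six points become $p_1=(\alpha:1:u)$ and $p_4=(\alpha:1:v)$ with $u\neq v$ in $\F_8$, together with their Frobenius conjugates $(\alpha^2:1:u^2)$, $(\alpha^4:1:u^4)$, $(\alpha^2:1:v^2)$, $(\alpha^4:1:v^4)$. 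The task is now purely combinatorial: to show that these six points fail to be in general position for every pair $u\neq v$.

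Two degeneracies suffice. First, the triple $\{p_1,p_2,p_3\}$ is collinear exactly when $D(u)=0$, where
\[
D(w)=\det\begin{pmatrix}\alpha&1&w\\ \alpha^2&1&w^2\\ \alpha^4&1&w^4\end{pmatrix}=w\bigl(\alpha^4w^3+\alpha^2w+\alpha\bigr).
\]
Dividing the cubic factor by $\alpha$ rewrites it as $(\alpha w)^3+(\alpha w)+1$, whose roots are $\alpha w\in\{\alpha,\alpha^2,\alpha^4\}$; hence $D$ vanishes precisely on the $\F_2$-subspace $V=\{0,1,\alpha,\alpha+1\}\subset\F_8$. General position therefore forces $u,v\notin V$. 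Second, I claim the six points lie on a conic whenever $u+v\in V$: subtracting the two conic equations coming from the two points on each of the three lines shows that an $\F_2$-conic through all six exists if and only if the matrix with rows $(\alpha^{2^i},\,1,\,(u+v)^{2^i})$ is singular, that is, iff $D(u+v)=0$, iff $u+v\in V$. Its kernel is stable under Frobenius, hence $\F_2$-rational, and extends to an honest conic.

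The argument then closes by a coset observation, which is the crux: the subspace $V$ has index $2$, so if $u,v\notin V$ they lie in the single nontrivial coset and $u+v\in V$; by the second step the six points then lie on a conic, contradicting general position. Combined with the first step (which disposes of $u\in V$ or $v\in V$), every pair $u\neq v$ fails general position, and the lemma follows. The main obstacle is the conic step: one must verify that singularity of the $3\times3$ matrix genuinely yields a nonzero conic vanishing at all six points, and not merely the three difference relations. I would secure this uniformly via the rank-$2$/Frobenius-rationality argument above (the first two columns already force rank $\ge2$, so the kernel is a line defined over $\F_2$, and solving the remaining linear equations recovers the $x^2$-, $xy$-, $y^2$-coefficients); as a check, the conic $x^2+y^2+xz+z^2=0$ passes through all six points when $(u,v)=(\alpha^2,\alpha^4)$.
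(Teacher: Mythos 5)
Your proof is correct, and it shares the paper's skeleton: the same reduction via Proposition~\ref{twist-to-use}(2) to a surface with $\Gamma$ of type $III$ and an $\F_2$-Eckardt point, the same blowup model (two conjugate triples on three concurrent conjugate lines), and the same final dichotomy — every candidate second triple is killed either by collinearity or by a conic through all six points. Where you genuinely diverge is in how that dichotomy is established. The paper argues abstractly by counting: there are four $\F_2$-lines avoiding $P$ and four $\F_2$-conics through $p_1,p_2,p_3$ avoiding $P$, and these eight curves are claimed to sweep out all eight $\F_8$-points of $l_1\setminus\{P\}$, so any conjugate $p_4\in l_1$ lands on one of them. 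You instead normalize coordinates, compute the collinearity determinant $D(w)=\alpha^4w^4+\alpha^2w^2+\alpha w$, identify its zero set as the index-two subgroup $V=\langle 1,\alpha\rangle\subset(\F_8,+)$, show an $\F_2$-conic through all six points exists exactly when $u+v\in V$, and close with the coset observation $u,v\notin V\Rightarrow u+v\in V$. The two arguments are really the same phenomenon seen from different sides: in your coordinates the four $\F_2$-lines meet $l_1$ exactly at $t\in V$ and the four conics meet it exactly at $t\in u+V$, which is the paper's covering statement. What your version buys is verifiability and precision: the paper's assertion that ``each two of these eight curves do not have a common point lying on one of the lines'' is not literally true (the four conics all share $p_1,p_2,p_3$, and one of them — the one with nucleus at $P$, a characteristic-$2$ phenomenon — is tangent to $l_i$ at $p_i$ and contributes no new point), so the paper's count needs exactly the refinement your explicit computation supplies; in exchange, the paper's argument is shorter and coordinate-free. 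Your Frobenius-rationality step for the kernel of the $3\times 3$ matrix is sound (a Galois-stable line over a finite field contains a nonzero rational vector), though it can be bypassed: over $\F_2$ the three difference equations are Frobenius conjugates of the single relation $D\alpha+E+F(u+v)=0$, so a nonzero solution $(D,E,F)\in\F_2^3$ exists precisely when $1,\alpha,u+v$ are $\F_2$-linearly dependent, i.e. $u+v\in V$.
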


\begin{proof}
By Proposition \ref{twist-to-use}(2), there exists a cubic surface $X'$ with an Eckardt point defined over $\F_2$ such that the generator $\gamma$ of the group $\Gamma'$ has type $III$. This cubic surface is not minimal and it is isomorphic to the blowup of $\Pro^2_{\F_2}$ at two triples of \mbox{$\Gal\left(\F_{8} / \F_2\right)$-conjugate} points $p_1$, $p_2$, $p_3$ and $p_4$, $p_5$, $p_6$ in general position. Moreover, three conjugate lines passing through $p_1$ and $p_4$, $p_2$ and $p_5$, $p_3$ and~$p_6$ respectively must have a common $\F_2$-point $P$ which is the image of the Eckardt point.

Let us consider any three conjugate lines $l_1$, $l_2$ and $l_3$ defined over $\F_8$ and passing through $\F_2$-point $P$. Assume that three conjugate points $p_1 \in l_1$, $p_2 \in l_2$ and $p_3 \in l_3$ do not lie on a line. There are $7$ lines on $\Pro^2_{\F_2}$ defined over $\F_2$ and $7$ conics on $\Pro^2_{\F_2}$ passing through $p_1$, $p_2$ and $p_3$ defined over $\F_2$. Three of these lines and three of these conics pass through $P$. Therefore there are four lines defined over $\F_2$ and not passing through $P$, and four conics defined over $\F_2$, not passing through $P$ and passing through $p_1$, $p_2$ and~$p_3$. Note that each two of these eight curves do not have a common point lying on one of the lines $l_1$, $l_2$ or $l_3$ since otherwise they have a common point on each of these three lines. Therefore eight of $\F_8$-points on $l_1$ lie on these eight curves and the ninth point is~$P$. Thus for any three conjugate points $p_4 \in l_1$, $p_5 \in l_2$ and $p_6 \in l_3$, the points $p_1$, $p_2$, $p_3$, $p_4$, $p_5$, $p_6$ are not in general position.

\end{proof}

\begin{lemma}
\label{onepointcubic}
If a plane cubic curve $S$ over $\F_2$ contains exactly one $\F_2$-point that is singular, then $S$ is a union of three lines defined over $\F_8$.
\end{lemma}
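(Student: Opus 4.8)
The plan is to classify the plane cubic $S$ over $\F_2$ by its $\F_2$-irreducible components and to eliminate every possibility except a single Galois orbit of three lines over $\F_8$. Throughout I would use the elementary fact that a line defined over $\F_2$ carries exactly $|\Pro^1(\F_2)|=3$ rational points, and that a smooth conic over $\F_2$ likewise has $3$ rational points.

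First I would rule out any $\F_2$-irreducible component of degree $1$, i.e. any line defined over $\F_2$: such a component would already supply three $\F_2$-points of $S$, contradicting the hypothesis that $S$ has exactly one $\F_2$-point. Writing $S=\sum_i m_i C_i$ as a sum of distinct $\F_2$-irreducible curves with multiplicities, every $C_i$ then has $\deg C_i\ge 2$; since $\sum_i m_i\deg C_i=3$, this forces a single component with $m_1=1$ and $\deg C_1=3$. Thus $S$ is reduced and $\F_2$-irreducible (in particular no reducible-over-$\F_2$, non-reduced, or conic-plus-line configuration survives, as each of those contains an $\F_2$-line).

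Next I would split into the geometrically irreducible and geometrically reducible cases. If $S$ is geometrically irreducible, then by hypothesis it is singular, and since an irreducible plane cubic has arithmetic genus $1$ it has a unique singular point; being the unique singular point of an $\F_2$-curve, it is Galois-fixed, hence $\F_2$-rational. Such an $S$ is a rational curve whose normalization is $\Pro^1$ (a smooth genus-$0$ curve over a finite field), and I would count $\F_2$-points through the normalization $\nu\colon\Pro^1\to S$: using the three rational points of $\Pro^1(\F_2)$, one gets $3$ points for a cusp, $2$ for a split node, and $4$ for a non-split node. In every case $|S(\F_2)|\ge 2$, contradicting the hypothesis, so $S$ cannot be geometrically irreducible.

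Therefore $S$ is $\F_2$-irreducible but geometrically reducible, so its geometric components form a single Galois orbit; they all share a common degree $\delta$, and if there are $r$ of them then $r\delta=3$. As $r>1$ (geometric reducibility) and components of differing degrees could not form one orbit, this forces $r=3$, $\delta=1$: three lines permuted transitively by $\Gal(\overline{\F}_2/\F_2)$, that is, three conjugate lines defined over $\F_8$, which is the assertion. I expect the geometrically irreducible case to be the main obstacle, since it requires both the observation that the singular point is automatically $\F_2$-rational and the normalization-based count separating the cuspidal and the split/non-split nodal sub-cases; the remaining cases collapse quickly once the degree bookkeeping has excluded lines defined over $\F_2$.
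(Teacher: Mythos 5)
Your proof is correct, and it takes a genuinely different route from the paper's. The paper argues by direct computation: it places the unique (singular) rational point at $(0:0:1)$, so that $S$ is cut out by a form $\left(Ax^3+Bx^2y+Cxy^2+Dy^3\right)+\left(Ex^2+Fxy+Gy^2\right)z$, and then imposes that the six remaining $\F_2$-points of $\Pro^2$ do not lie on $S$; this forces (after possibly swapping $x$ and $y$) the equation $x^3+xy^2+y^3=0$, which splits into three conjugate lines over $\F_8$ because $t^3+t+1$ is irreducible over $\F_2$. You instead classify the possible $\F_2$-irreducible component structures: any $\F_2$-rational line component is excluded since it alone would carry three rational points; a geometrically integral cubic is excluded by your normalization count ($3$, $2$, or $4$ rational points according as the singularity is a cusp, a split node, or a non-split node), which is sound because the unique geometric singular point is Galois-fixed, hence rational, and the genus-zero normalization over a finite field is $\Pro^1_{\F_2}$; what remains is a single Galois orbit of three lines, necessarily defined over the cubic extension $\F_8$. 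What the paper's computation buys is the explicit equation and, with it, the fact that the three lines are concurrent at the singular $\F_2$-point --- this concurrency is what Corollary \ref{onepointtangent} implicitly uses to conclude that $P$ is an Eckardt point. With your argument that extra fact needs a one-line supplement: $P$, being a singular point of a union of three distinct lines, lies on at least two of them, and applying Frobenius (which fixes $P$ and cyclically permutes the conjugate lines) puts $P$ on all three. What your approach buys is a coordinate-free, structural explanation of why every other configuration of a plane cubic must have at least two rational points over $\F_2$, at the cost of invoking normalization and genus arguments rather than elementary algebra.
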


\begin{proof}
Assume that a plain cubic curve $S$ has unique $\F_2$-point $(0:0:1)$ that is singular. Then $S$ is given by the equation
$$
\left(Ax^3 + Bx^2y + Cxy^2 + Dy^3\right) + \left(Ex^2 + Fxy + Gy^2\right)z = 0.
$$
The points $(1:0:0)$, $(0:1:0)$ and $(1:1:0)$ do not lie on $S$, therefore $A = D = 1$ and $B + C = 1$. Without loss of generality we can assume that $S$ is given by the equation
$$
x^3 + xy^2 + y^3 + \left(Ex^2 + Fxy + Gy^2\right)z = 0.
$$
The points $(1:0:1)$, $(0:1:1)$ and $(1:1:1)$ do not lie on $S$, therefore $E = G = F = 0$ and $S$ is given by the equation
$$
x^3 + xy^2 + y^3 = 0,
$$
\noindent that gives a triple of lines passing through $(0:0:1)$ and defined over $\F_8$.

\end{proof}

\begin{corollary}
\label{onepointtangent}
If a cubic surface $X$ of type $(c_{10})$ is defined over $\F_2$ then any tangent plane at $\F_2$-point contains at least two $\F_2$-points.
\end{corollary}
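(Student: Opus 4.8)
The plan is to argue by contradiction and reduce the statement to the two preceding lemmas. Fix an $\F_2$-point $P\in X(\F_2)$ and consider the plane cubic curve $S=X\cap T_P X$ cut out on $X$ by the tangent plane $T_P X$. Since $X$ is smooth, the tangent plane section $S$ has a singular point at $P$; this is the standard fact that the intersection of a smooth surface in $\Pro^3$ with its tangent plane acquires a singularity at the point of tangency. In particular $P$ is an $\F_2$-point of $S$ which is singular.

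Next I would suppose, for contradiction, that the tangent plane at $P$ contains no $\F_2$-point of $X$ other than $P$ itself; equivalently, that $S$ has exactly one $\F_2$-point. That point is necessarily $P$, and it is singular. Thus $S$ is a plane cubic over $\F_2$ whose unique $\F_2$-point is singular, which is precisely the hypothesis of Lemma~\ref{onepointcubic}. Applying that lemma, $S$ is a union of three lines defined over $\F_8$, and these three lines pass through the common singular point $P$. Since they lie on $X$ and meet at $P$, the point $P$ is an Eckardt point of $X$ defined over $\F_2$.

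This contradicts Lemma~\ref{c10f2eck}, which states that a cubic surface of type $(c_{10})$ over $\F_2$ contains no Eckardt point defined over $\F_2$. Hence the assumption is impossible, and $S$ — equivalently the tangent plane at the $\F_2$-point $P$ — must contain at least two $\F_2$-points of $X$, as claimed.

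The argument is short once the two lemmas are available. The only steps that need care are the two hypothesis checks: first, confirming that $S$ is genuinely singular at $P$, so that Lemma~\ref{onepointcubic} is applicable; and second, recognizing that a Galois orbit of three concurrent lines through an $\F_2$-point really does produce an Eckardt point \emph{defined over $\F_2$} in the sense ruled out by Lemma~\ref{c10f2eck}. I expect the latter identification to be the main conceptual point rather than a genuine obstacle, since the three lines are conjugate over $\F_8$ while their intersection point is fixed by Frobenius and therefore rational.
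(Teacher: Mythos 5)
Your proof is correct and follows exactly the paper's argument: assume the tangent plane section has only the one (necessarily singular) $\F_2$-point $P$, apply Lemma~\ref{onepointcubic} to conclude the section is three concurrent lines over $\F_8$, so $P$ is an Eckardt point over $\F_2$, contradicting Lemma~\ref{c10f2eck}. The two hypothesis checks you flag (singularity of the tangent section at $P$, and rationality of the Eckardt point) are indeed the only points needing care, and you handle both correctly.
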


\begin{proof}
Assume that there is an $\F_2$-point $P$ such that the tangent plane $T_P(X)$ contains one $\F_2$-point. Then the section of $X$ by the plane $T_P(X)$ is a plane cubic curve that has a unique $\F_2$-point $P$ which is singular. By Lemma \ref{onepointcubic} the point $P$ is an Eckardt point, that contradicts Lemma \ref{c10f2eck}.
\end{proof}

\begin{proposition}
\label{c10f2}
There are no cubic surfaces of type $(c_{10})$ defined over $\F_2$.
\end{proposition}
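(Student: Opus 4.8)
The goal is to show no cubic surface of type $(c_{10})$ exists over $\F_2$, and the strategy is a counting argument on the number of $\F_2$-points, exploiting the strong constraints accumulated in Lemma \ref{c10f2eck}, Lemma \ref{onepointcubic}, and Corollary \ref{onepointtangent}. First I would compute $N_1 = |X(\F_2)|$ directly from the zeta function. For type $(c_{10})$ we have $P_{10}(t) = (1-qt)(1+qt)^2(1+qt+q^2t^2)(1-qt+q^2t^2)$, and by formula~\eqref{zeta} the number of points is $N_1 = 1 + q^2 + \sum_i \alpha_i$ where the $\alpha_i$ are the reciprocal roots contributing to $P$; setting $q=2$ yields a specific small integer (I expect $N_1 = 5$ after the computation $1 + q^2 - \text{tr}(q\Fr) = 1 + 4 + 0 = 5$, since the relevant eigenvalue sum for $(c_{10})$ vanishes). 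The point is that over $\F_2$ this forces $X(\F_2)$ to be a very small, explicitly bounded set.

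Next I would use Corollary \ref{onepointtangent}: every tangent plane at an $\F_2$-point must contain at least two $\F_2$-points. The plan is to run a double-counting or incidence argument over the finitely many $\F_2$-points of $X$ together with the finitely many $\F_2$-rational planes in $\Pro^3_{\F_2}$. Since $\Pro^3_{\F_2}$ has only $15$ points and $15$ planes, and each $\F_2$-point $P \in X$ determines a unique tangent plane $T_P X$, the constraint that each such plane meets $X$ in at least two $\F_2$-points severely limits the configuration. I would tabulate, for the small point set $X(\F_2)$, the incidences between points and their tangent planes, and derive a contradiction from the requirement that each tangent plane carries a second $\F_2$-point while the total supply of $\F_2$-points is too small to satisfy all these incidences simultaneously.

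The main obstacle I anticipate is verifying that the hypotheses of Corollary \ref{onepointtangent} genuinely cannot all be met — that is, turning ``each tangent plane needs a second $\F_2$-point'' into a clean numerical impossibility rather than a case-by-case plane chase. To handle this cleanly, I would argue as follows: the tangent plane section $X \cap T_P X$ is a plane cubic with a singular point at $P$, and by Corollary \ref{onepointtangent} it contains a second $\F_2$-point $P'$; then the line $\overline{PP'}$ lies in the tangent plane and meets $X$ in a third point which is forced to be $\F_2$-rational, generating new $\F_2$-points or collapsing the configuration. Iterating this, or bounding the number of lines through the few available $\F_2$-points, I expect to show the point count cannot close up consistently with $N_1$ being as small as type $(c_{10})$ demands over $\F_2$.

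I would present the final contradiction by assuming such an $X$ exists, fixing an $\F_2$-point $P$ (which exists since $N_1 \geq 1$), analyzing $X \cap T_P X$ via the classification in Lemma \ref{onepointcubic}, and showing that whichever sub-case of the tangent-plane section occurs is incompatible either with Lemma \ref{c10f2eck} (no $\F_2$-Eckardt points) or with the exact value of $N_1$ computed from $P_{10}(t)$. The hard part is genuinely the bookkeeping of $\F_2$-points across tangent planes; the conceptual input is already fully supplied by the preceding lemmas.
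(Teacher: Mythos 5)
Your opening move --- computing $N_1$ from the zeta function and then exploiting Corollary \ref{onepointtangent} --- matches the paper's, but the count itself is wrong. For type $(c_{10})$ the Frobenius eigenvalues on $\Pic(\XX)\otimes\Q$ are $1,\,-\omega,\,-\omega^2,\,\omega,\,\omega^2,\,-1,\,-1$, whose sum is $-1$, not $0$; hence
$$
N_1 \;=\; 1 + q^2 + q\cdot(-1) \;=\; 1+4-2 \;=\; 3,
$$
not $5$. The paper's proof pivots precisely on there being \emph{exactly three} $\F_2$-points, so this is not a cosmetic slip.

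The more serious gap is the closing step. No pure incidence or double-counting contradiction exists here: three $\F_2$-points, each of whose tangent planes contains the other two, is a perfectly consistent incidence configuration in $\Pro^3_{\F_2}$, so the ``numerical impossibility'' you hope for cannot materialize. The paper instead splits into two cases and uses two different non-combinatorial inputs. If the three points are collinear, the line through them lies in each tangent plane (by Corollary \ref{onepointtangent}), hence meets $X$ with multiplicity at least $2$ at each point, i.e.\ at least $6>3$ in total, so it lies on $X$; a line defined over $\F_2$ contradicts the \emph{minimality} of a surface of type $(c_{10})$. If the three points are not collinear, one places them at the coordinate points, uses the tangent-line condition to force the plane section $t=0$ into the form $x^2y+y^2z+z^2x=0$, uses $N_1=3$ (no $\F_2$-points off that plane) plus a coordinate change to normalize the surface to
$$
x^2y + y^2z + z^2x + (xy + xz + yz)t + t^3 = 0,
$$
and then checks by direct computation that this surface is \emph{singular} at three $\F_8$-points, contradicting smoothness. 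Neither decisive argument --- the minimality contradiction nor the explicit singularity computation --- is present in your plan, and the bookkeeping you propose in their place cannot substitute for them.
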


\begin{proof}
Let $X$ be a cubic surface over $\F_2$ of type $(c_{10})$. Then
$$
Z_X(t)=\frac{1}{(1-t)(1-2t)(1+2t)^2(1+2t+4t^2)(1-2t+4t^2)(1-4t)}.
$$
Therefore there are exactly three $\F_2$-points on $X$.

Let $X$ contain three $\F_2$-points lying on a line. Then by Corollary \ref{onepointtangent} the tangent plane at any of these points passes through the two other $\F_2$-points. Therefore the line passing through the three $\F_2$-points defined over $\F_2$ lies on $X$ that contradicts the minimality of~$X$.

Let $X$ contain three $\F_2$-points not lying on a line. We can assume that these points are $(1:0:0:0)$, $(0:1:0:0)$ and $(0:0:1:0)$. Then in the plane $t = 0$ each tangent line of a nonsingular $\F_2$-point must pass through other $\F_2$-point by Corollary \ref{onepointtangent} and Lemma~\ref{c10f2eck}. Therefore without loss of generality the section $t = 0$ is given by $x^2y + y^2z + z^2x = 0$.

Let us consider a cubic surface given by the equation
$$
x^2y + y^2z + z^2x + Q(x, y, z)t + L(x, y, z)t^2 + Dt^3 = 0,
$$
\noindent where $Q(x, y, z)$ is a quadratic polynomial, $L(x, y, z)$ is a linear polynomial and $D$ is a constant. The surface $X$ does not contain any $\F_2$-points not lying on the plane section $t = 0$. Therefore one can check that $X$ is given by the equation
$$
x^2y + y^2z + z^2x + (xy + xz + yz)t + A(x^2t + xt^2) + B(y^2t + yt^2) + C(z^2t + zt^2) + t^3 = 0,
$$
\noindent where $A$, $B$ and $C$ are coefficients. By replacing the coordinates
$$
x \mapsto x + t, \qquad y \mapsto y + t, \qquad z \mapsto z + t
$$
\noindent we can make $A$, $B$ and $C$ equal to zero. Therefore $X$ can be given by the equation
$$
x^2y + y^2z + z^2x + (xy + xz + yz)t + t^3 = 0.
$$
\noindent Note that the points $(\xi_7 : \xi_7^2 : \xi_7^4 : 1)$, $(\xi_7^2 : \xi_7^4 : \xi_7 : 1)$ and $(\xi_7^4 : \xi_7 : \xi_7^2 : 1)$ defined over $\F_8$, where $\xi_7^3 + \xi_7 + 1 = 0$, are singular points on $X$. But $X$ must be a smooth cubic surface. This contradiction finishes the proof. 

\end{proof}

\appendix

\section{Conjugacy classes of elements in $W(E_6)$}

In Table \ref{table1} we collect some facts about conjugacy classes of elements in the Weyl group~$W(E_6)$. This table based on tables in \cite{SD}, \cite{Man74} and \cite{Car72}. The first column is a type according to \cite{SD}. The second column is the Carter graph corresponding to conjugacy class (see \cite{Car72}). The third column is the order of element. The fourth column is the collection of eigenvalues of the action of element on $K_X^{\perp} \subset \Pic(\XX) \otimes \mathbb{Q}$. In Remarks \ref{S6_in_W6} and \ref{3notation} we introduce notation for representatives of some conjugacy classes. For convenience of the reader we give this notation in the last column of the table.

\begin{table}
\caption{} \label{table1}

\begin{center}
\begin{tabular}{|c|c|c|c|c|}
\hline
Type & Graph & Order & Eigenvalues & Names \\
\hline
$c_1$ & $\varnothing$ & $1$ & $1$, $1$, $1$, $1$, $1$, $1$ & $\mathrm{id}$ \\
\hline
$c_2$ & $A_1^2$ & $2$ & $-1$, $-1$, $1$, $1$, $1$, $1$ & $(12)(34)$ \\
\hline
$c_3$ & $A_1^4$ & $2$ & $-1$, $-1$, $-1$, $-1$, $1$, $1$ & \\
\hline
$c_4$ & $D_4(a_1)$ & $4$ & $i$, $i$, $-i$, $-i$, $1$, $1$ & \\
\hline
$c_5$ & $A_3 \times A_1$ & $4$ & $i$, $-i$, $-1$, $-1$, $1$, $1$ & $(1234)(56)$ \\
\hline
$c_6$ & $A_2$ & $3$ & $\omega$, $\omega^2$, $1$, $1$, $1$, $1$ & $(123)$, type $II$ \\
\hline
$c_7$ & $D_4$ & $6$ & $-\omega$, $-\omega^2$, $-1$, $-1$, $1$, $1$ & \\
\hline
$c_8$ & $A_2 \times A_1^2$ & $6$ & $\omega$, $\omega^2$, $-1$, $-1$, $1$, $1$ & \\
\hline
$c_9$ & $A_2^2$ & $3$ & $\omega$, $\omega$, $\omega^2$, $\omega^2$, $1$, $1$ & $(123)(456)$, type $III$ \\
\hline
$c_{10}$ & $A_5 \times A_1$ & $6$ & $-\omega$, $-\omega^2$, $\omega$, $\omega^2$, $-1$, $-1$ & \\
\hline
$c_{11}$ & $A_2^3$ & $3$ & $\omega$, $\omega$, $\omega$, $\omega^2$, $\omega^2$, $\omega^2$ & type $I$ \\
\hline
$c_{12}$ & $E_6(a_2)$ & $6$ & $-\omega$, $-\omega$, $-\omega^2$, $-\omega^2$, $\omega$, $\omega^2$ & \\
\hline
$c_{13}$ & $E_6$ & $12$ & $i\omega$, $i\omega^2$, $-i\omega$, $-i\omega^2$, $\omega$, $\omega^2$ & \\
\hline
$c_{14}$ & $E_6(a_1)$ & $9$ & $\xi_9$, $\xi_9^2$, $\xi_9^4$, $\xi_9^5$, $\xi_9^7$, $\xi_9^8$ & \\
\hline
$c_{15}$ & $A_4$ & $5$ & $\xi_5$, $\xi_5^2$, $\xi_5^3$, $\xi_5^4$, $1$, $1$ & $(12345)$ \\
\hline
$c_{16}$ & $A_1$ & $2$ & $-1$, $1$, $1$, $1$, $1$, $1$ & $(12)$ \\
\hline
$c_{17}$ & $A_1^3$ & $2$ & $-1$, $-1$, $-1$, $1$, $1$, $1$ & $(12)(34)(56)$ \\
\hline
$c_{18}$ & $A_3$ & $4$ & $i$, $-i$, $-1$, $1$, $1$, $1$ & $(1234)$ \\
\hline
$c_{19}$ & $A_3 \times A_1^2$ & $4$ & $i$, $-i$, $-1$, $-1$, $-1$, $1$ & \\
\hline
$c_{20}$ & $D_5$ & $8$ & $\xi_8$, $\xi_8^3$, $\xi_8^5$, $\xi_8^7$, $-1$, $1$ & \\
\hline
$c_{21}$ & $A_2 \times A_1$ & $6$ & $\omega$, $\omega^2$, $-1$, $1$, $1$, $1$ & $(123)(45)$ \\
\hline
$c_{22}$ & $A_2^2 \times A_1$ & $6$ & $\omega$, $\omega$, $\omega^2$, $\omega^2$, $-1$, $1$ & \\
\hline
$c_{23}$ & $A_5$ & $6$ & $-\omega$, $-\omega^2$, $\omega$, $\omega^2$, $-1$, $1$ & $(123456)$ \\
\hline
$c_{24}$ & $D_5(a_1)$ & $12$ & $-\omega$, $-\omega^2$, $i$, $-i$, $-1$, $1$ & \\
\hline
$c_{25}$ & $A_4 \times A_1$ & $10$ & $\xi_5$, $\xi_5^2$, $\xi_5^3$, $\xi_5^4$, $-1$, $1$ & \\
\hline

\end{tabular}
\end{center}

\end{table}

\bibliographystyle{alpha}

\end{document}